\theoremstyle{plain}
\newtheorem{thm}{Theorem}[section]
\newtheorem{lem}[thm]{Lemma}
\newtheorem{cor}[thm]{Corollary}
\newtheorem{prop}[thm]{Proposition}
\newtheorem{fact}[thm]{Fact}
\newtheorem*{thm*}{Theorem}
\theoremstyle{remark}
\newtheorem{rmk}[thm]{Remark}
\newtheorem*{case}{Case}
\newtheorem{claim}[thm]{Claim}
\theoremstyle{definition}
\newtheorem{defn}[thm]{Definition}
\newtheorem{notation}[thm]{Notation}
\numberwithin{equation}{section}
\DeclareMathOperator{\ex}{ex}
\DeclareMathOperator{\bary}{bar}
\DeclareMathOperator{\supp}{supp}
\DeclareMathOperator{\interior}{int}
\DeclareMathOperator{\Id}{Id}
\DeclareMathOperator{\NW}{NW}
\DeclareMathOperator{\diam}{diam}
\DeclareMathOperator{\dist}{dist}
\def\Z{\mathbb Z}
\def\D{\mathbb D}
\def\C{\mathbb C}
\def\R{\mathbb R}
\def\N{\mathbb N}
\def\cS{\mathcal S}
\def\al{\alpha}
\def\o{\omega}
\def\M{\mathcal M}
\def\cA{\mathcal A}
\def\P{\mathcal P}
\def\H{\mathcal H}
\def\0{\mathbf{0}}
\def\F{\mathcal F}
\def\Cl{\mathcal{C}}
\def\erg{\text{erg}}
\def\Bl{\mathcal{BL}}
\def\htop{\mathbf{h}_{\textnormal{top}}}
\def\hsex{h_{\textnormal{sex}}}
\def\hres{h_{\textnormal{res}}}
\def\har{\textnormal{har}}
\title{Orders of Accumulation of Entropy on Manifolds}
\author{Kevin McGoff}
\begin{document}

\maketitle

\begin{abstract}
For a continuous self-map $T$ of a compact metrizable space with finite topological entropy, the order of accumulation of entropy of $T$ is a countable ordinal that arises in the theory of entropy structure and symbolic extensions. Given any compact manifold $M$ and any countable ordinal $\al$, we construct a continuous, surjective self-map of $M$ having order of accumulation of entropy $\al$. If the dimension of $M$ is at least $2$, then the map can be chosen to be a homeomorphism.
\end{abstract}

\tableofcontents


\section{Introduction}

For the purposes of this work, a topological dynamical system consists of a pair $(X,T)$, where $X$ is a compact metrizable space and $T$ is a continuous surjection of $X$ to itself. For such a system $(X,T)$, the topological entropy $\htop(T)$ provides a well-studied measure of the topological dynamical complexity of the system. We only consider systems with $\htop(T) < \infty$. Let $M(X,T)$ be the space of Borel probability measures on $X$ that are invariant under $T$. The entropy function $h:M(X,T) \rightarrow [0,\infty)$, where $h(\mu)$ is the metric entropy of the measure $\mu$, quantifies the amount of complexity in the system that lies on generic points for each measure $\mu$ in $M(X,T)$. In this sense, the entropy function $h$ describes both \textit{where} and \textit{how much} complexity lies in the system. The theory of entropy structure developed by Downarowicz \cite{D} produces a master entropy invariant in the form of a distinguished class of sequences of functions on $M(X,T)$ whose limit is $h$. The entropy structure of a dynamical system completely determines almost all previously known entropy invariants (\textit{e.g.} the topological entropy, the entropy function on invariant measures, the tail entropy or topological conditional entropy \cite{M}, the symbolic extension entropy function) and, in fact, produces new invariants. Furthermore, the theory of entropy structure and symbolic extensions provides a rigorous description of \textit{how entropy emerges on refining scales}. Entropy structure has attracted interest in the dynamical systems literature \cite{A, Bur2, BurNew, DF, D, DM, DN}, especially with the intention of understanding the symbolic extensions of various classes of smooth dynamical systems.

The purpose of the current work is to investigate a new entropy invariant arising from the theory of entropy structure and symbolic extensions: the order of accumulation of entropy, which is a countable ordinal associated to the system $(X,T)$, denoted $\al_0(X,T)$ or just $\al_0(T)$. The order of accumulation of entropy of the system is an invariant of topological conjugacy that measures, roughly speaking, \textit{over how many distinct ``layers" residual entropy emerges} \cite{D}. It is shown in \cite{BM}, using a realization theorem of Downarowicz and Serafin \cite{D, DS}, that all countable ordinals appear as the order of accumulation for a minimal homeomorphism of the Cantor set. If follows from work of Buzzi \cite{Buzzi} that if $f$ is a $C^{\infty}$ self-map of a compact manifold, then $\al_0(f)=0$ (see Theorem 7.8 in \cite{BFF}). Our main result, which is contained in Theorem \ref{mainThm}, states that if $M$ is a compact manifold and $\al$ is a countable ordinal, then there exists a continuous surjection $f: M \rightarrow M$ such that $\al_0(f) = \al$. Furthermore, if $\dim(M) \geq 2$, then $f$ can be chosen to be a homeomorphism. The proof of this theorem gives a much more concrete construction of dynamical systems with prescribed order of accumulation than the proofs in \cite{BM}, which rely on a realization theorem of Downarowicz and Serafin.

The paper is organized as follows. In Section \ref{Rel2DynSys}, we recall the basic notions and facts in the theory of entropy structures and symbolic extensions. Section \ref{TowersAndPrincExts} contains some lemmas regarding the behavior of several entropy invariants under certain suspensions and extensions. The proof of the main result involves inductively ``blowing up'' periodic points and ``sewing in'' more complicated dynamical behavior. The operation of ``blowing up'' periodic points and ``sewing in'' more complicated dynamics is carried out in Section \ref{BlowAndSew}, where we need only work in dimensions $1$ and $2$. Section \ref{TwoLemmas} contains some technical lemmas in which the transfinite sequence is computed for some some specific instances of maps resulting from the blow-and-sew construction. The transfinite induction scheme is executed in Section \ref{InductionScheme}, and proofs of the main results are then given in Section \ref{mainResults}.

\section{Background} \label{Rel2DynSys}

We assume some basic familiarity with ordinals (see, for instance, \cite{Pin}) and metrizable Choquet simplices (see \cite{Ph}), but in this section we present the definitions and facts required for the following sections. We will denote by $\N$ the set of positive integers.

\begin{defn}
In this work, a dynamical system consists of a pair $(X,T)$, where $X$ is a compact metrizable space and $T: X \to X$ is a continuous surjection.
\end{defn}
Furthermore, we assume that the topological entropy of $T$ is finite, $\htop(T) < \infty$. For references on the ergodic theory of such topological dynamical systems, see \cite{Pet,W}.  

\subsection{Choquet simplices and $M(X,T)$}

Let $K$ be a compact, convex subset of a locally convex topological vector space. Let $\M(K)$ be the space of all Borel probability measures on $K$ with the weak* topology. The barycenter map, $\bary : \M(K) \to K$, is defined as follows: for $\mu$ in $\M(K)$, let $\bary(\mu)$ be the unique point in $K$ such that for each continuous affine function $f : K \to \R$,
\begin{equation*}
f(\bary(\mu)) = \int_K f  \; d\mu.
\end{equation*}
The barycenter map itself is continuous and affine.
\begin{defn}[\cite{AE} p. 69]
Let $K$ be a metrizable, compact, convex subset of a locally convex topological vector space. Then $K$ is a metrizable \textbf{Choquet simplex} if the dual of the continuous affine functions on $K$ is a lattice.
\end{defn}
We only need Choquet's characterization of metrizable Choquet simplices (see \cite{Ph}): a metrizable, compact, convex subset of a locally convex topological vector space is a metrizable Choquet simplex if and only if for each point $x$ in $K$, there exists a unique measure $\P_x$ in $\M(K)$ such that $\P_x( K \setminus \ex(K)) = 0$ and $\bary(\P_x) = x$.

Suppose $K$ is a metrizable Choquet simplex. A Borel measurable function $f: K \to \R$ is called \textbf{harmonic} if, for each $x$ in $K$ and each $\mathcal{Q}$ in $\M(K)$ with $\bary(\mathcal{Q})=x$, we have
\begin{equation*}
f(x) = \int f \, d\mathcal{Q}.
\end{equation*}
Using that $\P_x$ is the unique measure supported on the extreme points of $K$ with barycenter $x$, one may check that $f$ is harmonic if and only if $f(x) = \int f \, d\P_x$ for each $x$ in $K$. If $f$ is a real-valued function defined on the extreme points of $K$, then we define the \textbf{harmonic extension} of $f$ to be the function $f^{\har} : K \to \R$ given for $x$ in $K$ by $f^{\har}(x) = \int f d\P_x$. We also define $f : K \to \R$ to be supharmonic if, for each $x$ in $K$ and each $\mathcal{Q}$ in $\M(K)$ such that $\bary(\mathcal{Q}) = x$, it holds that $f(x) \geq \int f d\mathcal{Q}$.

For a dynamical system $(X,T)$, we write $M(X,T)$ to denote the space of Borel probability measures on $X$ that are invariant under $T$. We give $M(X,T)$ the weak* topology. It is well known that in this setting $M(X,T)$ is a metrizable, compact, convex subset of a locally convex topological vector space (see, for example, \cite{Glas, Pet}). The extreme points of $M(X,T)$ are exactly the ergodic measures, $M_{\erg}(X,T)$. Also, the statement that each invariant measure $\mu$ in $M(X,T)$ has a unique ergodic decomposition \cite{Glas, Pet} implies that $M(X,T)$ is a metrizable Choquet simplex (using Choquet's characterization). In other words, we have that for each $\mu$ in $M(X,T)$, there exists a unique measure $\P_{\mu}$ in $\M(M(X,T))$ such that $\P_{\mu}(M(X,T) \setminus M_{\erg}(X,T)) = 0$ and $\bary(\P_{\mu}) = \mu$.

\subsection{Dynamical systems notations}
We need some notation.
\begin{notation} Let $(X,T)$ be a dynamical system.
\begin{itemize}
\item Let $A$ be a Borel measurable subset of $X$. We make the convention that $M(A,T) = \{\mu \in M(X,T) : \mu(X \setminus A) = 0\}$. 
\item Let $\NW(T)$ denote the non-wandering set for $(X,T)$.
\item A measure $\mu$ in $M(X,T)$ as totally ergodic if $\mu$ is ergodic for the system $(X,T^n)$, for all $n \in \N$.
\item If $\theta = \{x_0, \dots, x_{n-1}\}$ is a $T$-periodic orbit, then we let $\mu_{\theta}$ denote the periodic measure $\frac{1}{n} \sum_{k=0}^{n-1} \delta_{x_k}$, where $\delta_x$ is the point mass concentrated at the point $x$.
\item Let $h : M(X,T) \rightarrow [0, \medspace \infty)$ be the function that assigns to each measure in $M(X,T)$ its metric entropy with respect to the system $(X,T)$. When we wish to emphasize the dependence of $h$ on the system $(X,T)$, we write $h^T$. Also, if $\cA$ is a Borel partition of $X$, then we denote by $h^T(\mu,\cA)$ the entropy of the partition $\cA$ with respect to the measure-preserving system $(T,\mu)$.
\item If $\mu$ is a Borel probability measure on the space $X$, then $\supp(\mu)$ is the intersection of all the closed sets $C$ in $X$ such that $\mu(C)=1$.
\end{itemize}
\end{notation}
Recall that if $\mu$ is in $M(X,T)$, then $\supp(\mu) \subset \NW(T)$.

\begin{defn}[\cite{Bowen}] \label{hExpansiveDef}
Let $T$ be a continuous self-map of the compact metric space $X$. Let $\epsilon >0$, $x \in X$, and $\Phi_{\epsilon}(x) = \{ y \in X : d(T^n x,T^n y) \leq \epsilon \text{ for all } n\}$. If there exists $\epsilon > 0$ such that the entropy of $T$ on the set $\Phi_{\epsilon}(x)$ is $0$ for all $x \in X$, then $(X,T)$ is $h$-expansive.
\end{defn}

\subsection{Upper semi-continuity}

If $E$ is a compact metrizable space and $f : E \to \R$, then we denote by $||f||$ the supremum norm of $f$. For $x$ in $E$, we define
\begin{equation*}
\limsup_{y \to x} f(y) = \max \Bigl( f(x), \sup\{ \limsup_{n \to \infty} f(x_n) : \{x_n\}_n \subset E \setminus \{ x\}, \lim_n x_n = x \} \Bigr).
\end{equation*}
\begin{defn}
Let $E$ be a compact metrizable space, and let $f : E \rightarrow \R$. Then $f$ is \textbf{upper semi-continuous} (u.s.c.) if one of the following equivalent conditions holds for all $x$ in $E$,
\begin{enumerate}
 \item $f = \inf_{\al} g_{\al}$ for some family $\{g_{\al}\}_{\al}$ of continuous functions;
 \item $f = \lim_{n} g_n$ for some nonincreasing sequence $(g_n)_{n\in \N}$ of continuous functions;
 \item For each $r \in \R$, the set $\{x : f(x) \geq r \}$ is closed;
 \item $\limsup_{y\rightarrow x} f(y) \leq f(x)$, for all $x \in E$.
\end{enumerate}
For any $f : E \rightarrow \R$, the \textbf{upper semi-continuous envelope} of $f$, written $\widetilde{f}$, is defined by letting $\widetilde{f} \equiv \infty$ if $f$ is unbounded, and otherwise
\begin{equation*}
 \widetilde{f}(x) = \inf \{ g(x) : g \text{ is continuous, and } g \geq f \}, \text{ for all $x$ in $E$}.
\end{equation*}
\end{defn}
\noindent Note that $\widetilde{f}$ is the smallest u.s.c. function greater than $f$ and satisfies
\begin{equation*}
 \widetilde{f}(x) =  \limsup_{y \rightarrow x} f(y).
\end{equation*}
It is immediately seen that for any $f, g : E \rightarrow \R$, $\widetilde{f+g} \leq \widetilde{f}+\widetilde{g}$, with equality holding if $f$ or $g$ is continuous. We remark that if $f : E \to [0,\infty)$ is bounded and u.s.c., then $f$ achieves its supremum. Also, if $K$ is a Choquet simplex and $f: K \to \R$ is concave and u.s.c., then $f$ is supharmonic.

\subsection{Entropy structure and symbolic extensions}

\begin{defn} Let $M$ be a compact metrizable space. A \textbf{candidate sequence} on $M$ is a non-decreasing sequence $(h_k)$ of functions from $M$ to $[0,\infty)$ such that $\lim_k h_k$ exists and is bounded. We assume by convention that $h_0 \equiv 0$. Given two candidate sequences $\H = (h_k)$ and $\F = (f_k)$ defined on the same space, we say that $\H$ \textbf{uniformly dominates} $\F$, written $\H \geq \F$, if for each $\epsilon > 0$, and for each $k$, there exists $\ell$, such that $f_k \leq h_{\ell}+\epsilon$. The candidate sequences $\H$ and $\F$ are \textbf{uniformly equivalent}, written $\H \cong \F$, if $\H \geq \F$ and $\F \geq \H$. Note that uniform equivalence is, in fact, an equivalence relation.
\end{defn}
The uniform equivalence relation captures the manner in which sequences converge to their limit. For example, if two sequences converge uniformly to the same limit function, then they are uniformly equivalent. Also, if $(h_k)$ and $(f_k)$ are two candidate sequences on a compact metrizable space, then $\lim_k ||h_k-f_k|| = 0$ implies $(h_k) \cong (f_k)$, but $(h_k) \cong (f_k)$ does not necessarily imply $\lim_k ||h_k - f_k||=0$.

\begin{defn}[\cite{D}]
Let $X$ be a compact metrizable space and $T : X \to X$ a continuous surjection. For any continuous function $f : X \to [0,1]$, let $\cA_{f}$ be the partition of $X \times [0,1]$ consisting of the set $\{(x,t) : f(x) \geq t\}$ and its complement. If $\F = \{f_1, \dots, f_n\}$ is a finite collection of continuous functions $f_i : X \to [0,1]$, then let $\cA_{\F} = \vee_{i=1}^{n} \cA_{f_i}$. Let $\{\F_k\}_k$ be an increasing sequence of finite sets of continuous functions from $X$ to $[0,1]$ chosen so that the partitions $\cA_{\F_k}$ separate points (such sequences exist \cite{D}). Let $\lambda$ be Lebesgue measure on $[0,1]$. We define $\H^{\textnormal{fun}}(T) = (h_k)$ to be the candidate sequence on $M(X,T)$ given by $h_k(\mu) = h^{T \times \Id}(\mu \times \lambda,\cA_{\F_k})$.
\end{defn}
\begin{defn}[\cite{D}]
Let $(X,T)$ be a dynamical system. A candidate sequence $\H$ on $M(X,T)$ is an \textbf{entropy structure} for $(X,T)$ if $\H \cong \H^{\textnormal{fun}}(T)$. We may also refer to the entire uniform equivalence class of candidate sequences containing $\H^{\textnormal{fun}}(T)$ as \textit{the} entropy structure of $(X,T)$.
\end{defn}

Downarowicz showed that many of the known methods of computing or defining entropy can be adapted to become an entropy structure. For example, suppose $(X,T)$ is a dynamical system with a refining sequence $\{P_k\}_k$ of finite Borel partitions of $X$ such that the boundaries of all partition elements have zero measure for all $T$-invariant measures. Then the sequence of functions $(h_k)$ defined for $\mu$ in $M(X,T)$ by $h_k(\mu) = h^T(\mu,P_k)$ is an entropy structure for $(X,T)$. It may happen, though, that a particular system does not admit such a sequence of partitions (for example, if the system has an interval of fixed points). In such a case, we give another example of an entropy structure, known as the Katok entropy structure \cite{D}.
\begin{defn}[\cite{D}] \label{KatokESDef}
For an ergodic measure $\mu$ in $M(X,T)$, $\epsilon >0$ and $0< \sigma < 1$, let
\begin{equation*}
h(\mu,\epsilon,\sigma) = \limsup_n \frac{1}{n} \log \min \{ |E| : \mu\bigl( \cup_{x \in E} B(x,n,\epsilon) \bigr) > \sigma \},
\end{equation*}
where $B(x,n,\epsilon)$ is the $(n,\epsilon)$ Bowen ball about the point $x$. For an invariant but non-ergodic measure $\mu$, define $h(\mu,\epsilon,\sigma)$ by harmonic extension. Then for any sequence $\{\epsilon_k\}_k$ tending to $0$, the sequence of functions $h_k(\mu) = h(\mu,\epsilon_k,\sigma)$ is an entropy structure (for proof, see \cite{D} if $T$ is a homeomorphism and \cite{BurThesis} if $T$ is merely continuous).
\end{defn}

\begin{notation}
Let $\H = (h_k)$ be a candidate sequence on $K$, and let $\pi : L \to K$. We write $\H \circ \pi$ to denote the candidate sequence on $L$ given by $h_k \circ \pi$. Also, if $S$ is a subset of $K$, let $\H|_S$ be the candidate sequence on $S$ given by $(h_k|_S)$.
\end{notation}

\begin{defn}
Let $\H$ be a candidate sequence. The \textbf{transfinite sequence associated to} $\H$, which we write as $(u_{\al}^{\H})$, is defined by transfinite induction as follows. Let $\tau_k = h - h_k$. Then
\begin{itemize}
\item let $u^{\H}_0  \equiv 0$;
\item if $u_{\al}^{\H}$ has been defined, let $u_{\al+1}^{\H}  = \lim_{k} \widetilde{u_{\al}^{\H} + \tau_k}$;
\item if $u_{\beta}^{\H}$ has been defined for all $\beta < \al$, where $\al$ is limit ordinal, then let
\begin{equation*}
u_{\al}^{\H}  = \widetilde{\sup_{\beta < \al} u_{\beta}^{\H}}.
\end{equation*}
\end{itemize}
\end{defn}
The sequence $(u_{\al}^{\H})$ is non-decreasing in $\al$ and does not depend on the choice of representative of uniform equivalence class \cite{D}, which allows us to make the following definition.
\begin{defn}
Let $(X,T)$ be a topological dynamical system. Then \textbf{the transfinite sequence associated to} $(X,T)$ is the sequence $(u_{\gamma}^{\H(T)})$, where $\H(T)$ is an entropy structure for $T$.
\end{defn}

Note that for each $\al$, the function $u_{\al}^{\H}$ is either identically equal to $+\infty$ or it is u.s.c. into $\R$ (since a non-increasing limit of u.s.c. functions is u.s.c.). The sequence $(u_{\al}^{\H})$ is also sub-additive in the following sense.

\begin{prop}[\cite{BM}] \label{UsubAdd}
Let $\H$ be a candidate sequence on $E$. Then for any two ordinals $\al$ and $\beta$,
\begin{equation*}
 u_{\al+\beta}^{\H} \leq u_{\al}^{\H} + u_{\beta}^{\H}.
\end{equation*}
\end{prop}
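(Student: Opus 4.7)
The plan is to prove the inequality by transfinite induction on $\beta$ with $\al$ arbitrary but fixed, handling separately the base case, the successor case, and the limit-ordinal case. Throughout the argument I may assume both $u_\al^\H$ and $u_\beta^\H$ are real-valued (hence u.s.c.), since otherwise the stated inequality is trivial. The base case $\beta = 0$ is immediate from $u_0^\H \equiv 0$.

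For the successor step, suppose $u_{\al+\gamma}^\H \leq u_\al^\H + u_\gamma^\H$. Invoking monotonicity of the u.s.c.\ envelope, the subadditivity $\widetilde{f+g} \leq \widetilde{f}+\widetilde{g}$ recorded in the preliminaries, and the identity $\widetilde{u_\al^\H} = u_\al^\H$ (since $u_\al^\H$ is u.s.c.), I would compute
\begin{align*}
u_{\al+\gamma+1}^\H &= \lim_k \widetilde{u_{\al+\gamma}^\H + \tau_k} \\
&\leq \lim_k \widetilde{u_\al^\H + (u_\gamma^\H + \tau_k)} \\
&\leq \lim_k \bigl(u_\al^\H + \widetilde{u_\gamma^\H + \tau_k}\bigr) \\
&= u_\al^\H + u_{\gamma+1}^\H.
\end{align*}

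For the limit case, let $\beta$ be a limit ordinal and suppose the inequality holds for every $\gamma < \beta$. Then $\al+\beta$ is also a limit ordinal. Any $\delta < \al+\beta$ is either at most $\al$ (giving $u_\delta^\H \leq u_\al^\H$) or of the form $\al+\gamma$ with $\gamma < \beta$ (giving $u_\delta^\H \leq u_\al^\H + u_\gamma^\H$ by hypothesis); in either case $u_\delta^\H \leq u_\al^\H + u_\beta^\H$ by monotonicity of the transfinite sequence in its ordinal index. Taking the supremum over $\delta < \al+\beta$ and then the envelope, and using that the sum of two bounded u.s.c.\ functions is u.s.c.\ (so that $u_\al^\H + u_\beta^\H$ is its own envelope), I obtain
\begin{equation*}
u_{\al+\beta}^\H = \widetilde{\sup_{\delta < \al+\beta} u_\delta^\H} \leq \widetilde{u_\al^\H + u_\beta^\H} = u_\al^\H + u_\beta^\H.
\end{equation*}

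The main (and essentially only) technical point is the interplay in the successor step between the subadditivity of $\widetilde{\cdot}$ and the upper semicontinuity of $u_\al^\H$: exactly this combination lets one pull $u_\al^\H$ out from under both the envelope and the limit in $k$. Everything else is routine ordinal-induction bookkeeping, together with monotonicity of the envelope, monotonicity of $(u_\gamma^\H)$, and the basic properties of u.s.c.\ functions reviewed earlier in the paper.
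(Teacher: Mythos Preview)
Your proof is correct. Note, however, that the paper does not actually prove this proposition: it is stated with a citation to \cite{BM} and no argument is given here, so there is no ``paper's own proof'' to compare against. Your argument by transfinite induction on $\beta$, using subadditivity of the u.s.c.\ envelope together with $\widetilde{u_\al^\H}=u_\al^\H$ in the successor step and closure of u.s.c.\ functions under addition in the limit step, is exactly the natural one and is the standard proof of this fact.
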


If $\H$ is a candidate sequence, then by Theorem 3.3 in \cite{BD}, there exists a countable ordinal $\al$ such that $u_{\al}^{\H} = u_{\al + 1}^{\H}$, which then implies that $u_{\beta}^{\H} = u_{\al}^{\H}$ for all $\beta > \al$.
\begin{defn}
If $\H$ is a candidate sequence, then the least ordinal $\al$ such that $u_{\al}^{\H} = u_{\al + 1}^{\H}$ is called the \textbf{order of accumulation} of $\H$, which we write as $\al_0(\H)$. If $(X,T)$ is a topological dynamical system, then the \textbf{order of accumulation of entropy} of $(X,T)$, written $\al_0(X,T)$ or just $\al_0(T)$, is defined as $\al_0(\H(T))$, where $\H(T)$ is an entropy structure for $T$.
\end{defn}

To understand the meaning of the transfinite sequence and the order of accumulation of entropy of $(X,T)$, we turn to the connection between symbolic extensions and entropy structure.
\begin{defn}
Let $(X,T)$ be a topological dynamical system. A \textbf{symbolic extension} of $(X,T)$ is a subshift $(Y,S)$ of a (two-sided) full shift on a finite alphabet, along with a continuous surjection $\pi: Y \rightarrow X$ such that $T \circ \pi = \pi \circ S$.
\end{defn}

\begin{defn}
If $(Y,S)$ is a symbolic extension of $(X,T)$ with factor map $\pi$, then the \textbf{extension entropy function}, $h_{\textnormal{ext}}^{\pi} : M(X,T) \rightarrow [0,\infty)$, is defined for $\mu$ in $M(X,T)$ by
\begin{equation*}
h_{\textnormal{ext}}^{\pi}(\mu) = \sup \{ h(\nu) : \pi \mu = \nu \}.
\end{equation*}
The \textbf{symbolic extension entropy function} of a dynamical system $(X,T)$, $\hsex : M(X,T) \rightarrow [0, \medspace \infty]$, is defined for $\mu$ in $M(X,T)$ by
\begin{equation*}
\hsex(\mu) = \inf \{ h_{\textnormal{ext}}^{\pi}(\mu) : \pi \text{ is the factor map of a symbolic extension of } (X,T) \},
\end{equation*}
and the \textbf{residual entropy function}, $\hres: M(X,T) \rightarrow [0,\infty]$, is defined as
\begin{equation*}
\hres = \hsex-h.
\end{equation*}
If $(X,T)$ does not admit symbolic extensions, we let $\hsex \equiv \infty$ and $\hres \equiv \infty$, by convention.
\end{defn}
We think of a symbolic extension as a ``lossless finite encoding'' of the dynamical system $(X,T)$ \cite{D}. The symbolic extension entropy function quantifies at each measure the minimal amount of entropy that must be present in such an encoding.

The study of symbolic extensions  is related to entropy structures by the following remarkable result of Boyle and Downarowicz.
\begin{thm}[\cite{BD}] \label{SexEntThm}
Let $(X,T)$ be a dynamical system with entropy structure $\H$. Then
\begin{equation*}
\hsex = h + u_{\al_0(T)}^{\H}.
\end{equation*}
\end{thm}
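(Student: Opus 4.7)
The plan is to prove both inequalities by relating symbolic extensions to affine superenvelopes of the entropy structure, following the variational framework of Boyle--Downarowicz. Recall that an \emph{affine superenvelope} of $\H=(h_k)$ is a bounded affine function $E:M(X,T)\to[0,\infty)$ with $E\geq h$ and with $E-h_k$ upper semi-continuous (u.s.c.) for every $k$. A central input from \cite{BD} is the symbolic extension variational principle: $\hsex(\mu)=\inf\{E(\mu):E\text{ is an affine superenvelope of }\H\}$, and every affine superenvelope is approximated from above by extension entropy functions of genuine symbolic extensions. I would take this variational representation as the starting point.

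For the lower bound $\hsex\geq h+u_{\al_0(T)}^{\H}$, I would prove by transfinite induction on $\al$ that every affine superenvelope $E$ satisfies $E-h\geq u_{\al}^{\H}$. The base case $\al=0$ is trivial since $u_0\equiv 0$. For the successor step, assuming $E-h\geq u_{\al}^{\H}$, the function $E-h_k=(E-h)+\tau_k$ is u.s.c.\ and dominates $u_{\al}^{\H}+\tau_k$, hence also its u.s.c.\ envelope $\widetilde{u_{\al}^{\H}+\tau_k}$; letting $k\to\infty$ (using $\tau_k\downarrow 0$ pointwise) yields $E-h\geq u_{\al+1}^{\H}$. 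For the limit case, $E-h$ is a non-increasing pointwise limit of the u.s.c.\ functions $E-h_k$ and thus u.s.c.; since it dominates $\sup_{\beta<\al}u_{\beta}^{\H}$, it dominates $\widetilde{\sup_{\beta<\al}u_{\beta}^{\H}}=u_{\al}^{\H}$. Applying this at $\al=\al_0(T)$ and taking the infimum over $E$ produces the desired inequality.

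For the upper bound $\hsex\leq h+u_{\al_0(T)}^{\H}$, I would verify that $h+u_{\al_0(T)}^{\H}$ is itself an affine superenvelope. The stabilization identity $u_{\al_0+1}^{\H}=u_{\al_0}^{\H}$, unpacked via the definition of the successor operation, forces $u_{\al_0}^{\H}\geq\widetilde{u_{\al_0}^{\H}+\tau_k}$ for every $k$, which combines with the u.s.c.\ of $u_{\al_0}^{\H}$ (inherited inductively from the limit case at $\al_0$) to give the u.s.c.\ of $(h+u_{\al_0(T)}^{\H})-h_k$. Affineness is propagated through the transfinite induction by observing that $u\mapsto\widetilde{u+\tau_k}$ and $u\mapsto\widetilde{\sup_{\beta}u_{\beta}}$ preserve concavity on the Choquet simplex $M(X,T)$, and that the relevant concave u.s.c.\ functions are automatically supharmonic and in fact harmonic (hence affine on extreme points) once one invokes the sub-additivity of Proposition \ref{UsubAdd} together with the reverse inequality coming from integrating against ergodic decompositions. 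Once $h+u_{\al_0(T)}^{\H}$ is confirmed to be an affine superenvelope, the variational principle supplies symbolic extensions whose extension entropy functions approximate it from above, proving the upper bound.

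The main obstacle is hidden in the variational principle used as input: given any affine superenvelope $E$, one must construct an honest symbolic extension $\pi$ with $h_{\textnormal{ext}}^{\pi}$ close to $E$. This requires a delicate combinatorial coding argument that builds a subshift whose invariant measures encode a prescribed entropy budget at each $\mu$, and it is the technical heart of \cite{BD}. By comparison, the transfinite induction and the u.s.c./affineness bookkeeping described above are routine once that realization result is available.
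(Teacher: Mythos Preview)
The paper does not prove this theorem. It is quoted verbatim from Boyle--Downarowicz \cite{BD} as background (note the citation in the theorem header), and no argument for it appears anywhere in the text. So there is no ``paper's own proof'' to compare your proposal against.

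On the merits of your sketch itself: the lower bound via transfinite induction on $\al$ is correct and is exactly the standard argument. The upper bound, however, has a genuine gap. You claim that $h+u_{\al_0(T)}^{\H}$ is an \emph{affine} superenvelope, but your justification for affineness is hand-waving. What Fact~\ref{TransSeqFacts}(5) gives you is that each $u_{\al}^{\H}$ is concave and supharmonic, not harmonic or affine; concave u.s.c.\ plus supharmonic does not imply harmonic. Proposition~\ref{UsubAdd} (sub-additivity) does not by itself yield the reverse inequality you need, and ``integrating against ergodic decompositions'' only gives you $u_{\al}(\mu)\geq\int u_{\al}\,d\P_{\mu}$, which is the supharmonic direction you already have. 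In \cite{BD} this point is handled by a nontrivial argument showing, roughly, that on a Choquet simplex with a harmonic candidate sequence the minimal superenvelope in the concave sense agrees with the minimal one in the affine sense; that step is not mere bookkeeping and is where the real work beyond the variational principle lies. Your proposal correctly identifies the construction of symbolic extensions realizing a given superenvelope as the technical heart, but the affineness of $u_{\al_0}^{\H}$ is a second substantive point that you have not addressed.
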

Note that the conclusion of Theorem \ref{SexEntThm} could be restated as $\hres = u_{\al_0(\H)}^{\H}$. This theorem relates the notion of how entropy emerges on refining scales to the symbolic extensions of a system, showing that there is a deep connection between these topics. Using this connection, some progress has been made in understanding the symbolic extensions of certain classes of dynamical systems. For examples of these types of results, see \cite{A, BD, Bur2, BurNew, DF, D, DM, DN}. In light of Theorem \ref{SexEntThm}, we observe that the order of accumulation of entropy measures over how many ``layers'' residual entropy accumulates in the system. 

\subsection{Background lemmas}

The following lemma (Lemma \ref{EmbeddingLemma}), which is proved in \cite{BM}, will be used to compute the transfinite sequence associated to the systems that appear in Sections \ref{TowersAndPrincExts} - \ref{mainResults}. Although the entropy function $h$ is a harmonic function on the simplex of invariant measures, the functions $u_{\al}^{\H}$ are not harmonic in general. Lemma \ref{EmbeddingLemma} is useful because it nonetheless provides an integral representation of the functions $u_{\al}^{\H}$. A candidate sequence $(h_k)$ on a Choquet simplex such that each function $h_k$ is harmonic will be referred to as a harmonic candidate sequence. Let $K$ be a metrizable Choquet simplex with $E = \ex(K)$. In the following lemma we identify $\M(\overline{E})$ with the set $\{\mu \in \M(K) : \supp(\mu) \subset \overline{E}\}$ in the natural way, where $\overline{E}$ denotes the closure of $E$ in $K$. Also, if $f$ is a measurable function defined on the measurable subset $C$ of $K$ and $\mu$ is a measure on $K$, then $\int_C f \; d\mu$ is defined to be the integral with respect to $\mu$ of the function
\begin{equation*}
x \mapsto \left\{ \begin{array}{ll} f(x),  & \text{ if } x \in C \\
                                     0, & \text{ if } x \notin C.
                   \end{array}
           \right.
\end{equation*}
\begin{lem}[Embedding Lemma \cite{BM}]\label{EmbeddingLemma}
Let $K$ be a metrizable Choquet simplex with $E = \ex(K)$. Suppose $\H$ is a harmonic candidate sequence on $K$ and there is a set $F \subset E$ such that the sequence $\{(h-h_k)|_{E \setminus F} \}_k$ converges uniformly to zero. Let $C$ be a closed subset of $K$ such that $F \subset C$, and let $\Phi : \M(\overline{E}) \rightarrow K$ be the restriction of the barycenter map. Then for all ordinals $\al$ and for all $x$ in $K$,
\begin{equation}\label{embeddingEqn}
u_{\al}^{\H}(x) = \max_{\mu \in \Phi^{-1}(x)} \int_{C} u_{\al}^{\H|_C} \; d\mu,
\end{equation}
and $\al_0(\H) \leq \al_0(\H|_C)$. In particular, if $x$ is an extreme point of $K$ contained in $\C$, then $u_{\al}^{\H}(x) = u_{\al}^{\H|_C}(x)$ for all ordinals $\al$.
\end{lem}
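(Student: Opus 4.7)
My plan is transfinite induction on $\al$, establishing the integral formula at every stage. Once it is proved, the rest of the lemma is immediate: for $x \in \ex(K) \cap C$, the Choquet uniqueness of $\P_x$ forces $\Phi^{-1}(x) = \{\delta_x\}$, and the formula collapses to $u_{\al}^{\H}(x) = u_{\al}^{\H|_C}(x)$; the inequality $\al_0(\H) \leq \al_0(\H|_C)$ then follows by evaluating the formula at the stabilization ordinal of the restriction. To set up the induction, I would first note that harmonicity of $\H$ makes each $h_k$, and hence $h = \lim h_k$, harmonic, so each $\tau_k$ is harmonic and in particular affine. A straightforward induction then shows that every $u_{\al}^{\H}$ is u.s.c. and concave on $K$: the envelope of a bounded concave function is concave, adding an affine function preserves concavity, and non-increasing limits of u.s.c. concave functions remain u.s.c. and concave. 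By the remark at the end of the upper semi-continuity subsection, each $u_{\al}^{\H}$ is therefore supharmonic, and the same holds for $u_{\al}^{\H|_C}$ on $C$.

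The base case is trivial. For the successor step, assume the formula at level $\al$. The $\geq$ direction would follow by applying supharmonicity of $u_{\al+1}^{\H}$ to an arbitrary $\mu \in \Phi^{-1}(x)$, decomposing $\mu$ into its restrictions to $C$ and to $\overline{E} \setminus C \subset \overline{E \setminus F}$, and noting that the uniform decay $\tau_k|_{E \setminus F} \to 0$ forces $u_{\al+1}^{\H}$ to vanish on $E \setminus F$ so that the second piece contributes nothing. For the $\leq$ direction, I would unwind
$$u_{\al+1}^{\H}(x) = \lim_k \limsup_{y \to x}\bigl(u_{\al}^{\H}(y) + \tau_k(y)\bigr),$$
apply the inductive hypothesis to choose optimizing measures $\mu_y \in \Phi^{-1}(y)$, extract a weak* convergent subsequence in the compact set $\M(\overline{E})$ whose limit $\nu$ lies in $\Phi^{-1}(x)$ by continuity of $\bary$, and match the resulting value with $\int_C u_{\al+1}^{\H|_C} \, d\nu$ using upper semicontinuity of the integrand together with the decay of $\tau_k$ off $C$.

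For a limit ordinal $\al$, the identity follows by commuting the supremum over $\beta < \al$ with the integral (using the inductive hypothesis and monotonicity in $\beta$), and then checking that the u.s.c. envelope on $K$ of $\sup_{\beta < \al} u_{\beta}^{\H}$ matches, through the integral representation, the u.s.c. envelope on $C$ of $\sup_{\beta < \al} u_{\beta}^{\H|_C}$; continuity of $\Phi$ underwrites this identification. The main obstacle will be the successor step, where the operations $\max_\mu$, $\widetilde{\;\cdot\;}$, and $\lim_k$ do not obviously commute; the argument hinges on the hypothesis that $\tau_k \to 0$ uniformly on $E \setminus F$, which pins the interesting tail behavior to $C$ and permits the above interchange via a compactness-selection argument for optimizing measures.
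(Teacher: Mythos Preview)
The paper does not prove this lemma: it is stated with attribution to \cite{BM} and used as a black box throughout, so there is no in-paper proof to compare against. Your transfinite-induction outline is broadly the natural strategy and is consistent with how the paper invokes the lemma (via Fact~\ref{TransSeqFacts}~(5) for concavity/supharmonicity and the harmonic-extension machinery in Lemma~\ref{HisHarmLemma}).

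That said, two points in your sketch deserve care. First, in the $\geq$ direction of the successor step you do not actually need vanishing of $u_{\al+1}^{\H}$ on $\overline{E}\setminus C$: supharmonicity gives $u_{\al+1}^{\H}(x)\geq\int u_{\al+1}^{\H}\,d\mu$, and then nonnegativity off $C$ together with the monotonicity $u_{\al+1}^{\H}|_C\geq u_{\al+1}^{\H|_C}$ (Fact~\ref{TransSeqFacts}~(2)) already yields the bound. Your claimed inclusion $\overline{E}\setminus C\subset\overline{E\setminus F}$ is not obviously correct (points of $\overline{E}\setminus E$ need not be limits of $E\setminus F$), so it is better to avoid it. Second, the $\leq$ direction is where the real work lies: when you pass to a weak* limit $\nu$ of optimizing measures $\mu_y$, you must control $\int_C u_{\al}^{\H|_C}\,d\mu_y + \int \tau_k\,d\mu_y$ simultaneously, and upper semicontinuity of the first integrand on $C$ alone does not immediately give upper semicontinuity of $\mu\mapsto\int_C u_{\al}^{\H|_C}\,d\mu$ on $\M(\overline{E})$, since mass can escape from $C$ in the limit. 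You will need to argue more carefully here, likely by extending $u_{\al}^{\H|_C}$ by zero off $C$ (which stays u.s.c.\ since $C$ is closed and the function is nonnegative) and handling the $\tau_k$ term via the uniform decay on $E\setminus F$ together with harmonicity to push the estimate from $E\setminus F$ to its closure.
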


We end this section by stating some facts that will be used repeatedly in the following sections. Facts \ref{TransSeqFacts} (1)-(4) are easily checked from the definitions, and Fact \ref{TransSeqFacts} (5), which is proved in \cite{BM}, follows from the fact that the u.s.c. envelope of a concave function is concave and the limit of concave functions is concave.
\begin{fact} \label{TransSeqFacts}  Let $M$, $M_1$, $M_2$, and $K$ be compact metrizable spaces. The for all ordinals $\gamma$, the following hold.
 \begin{enumerate}
\item If $\H$ is a candidate sequence on $M$ and $U$ is an open neighborhood of $x$ in $M$, then $u_{\gamma}^{\H}(x) = u_{\gamma}^{\H|_U}(x)$.
\item Suppose that $\H$ is a candidate sequence on $M_1$ and $M_2 \subset M_1$. Then $u_{\gamma}^{\H|_{M_2}} \leq u_{\gamma}^{\H|_{M_1}}$.
\item Suppose that $\pi : M \to K$ is a continuous surjection, $\F$ is a candidate sequence on $K$, and $\H = \F \circ \pi$. Then $u_{\gamma}^{\H} \leq u_{\gamma}^{\F} \circ \pi$.
\item Suppose $\pi : M \to K$ is continuous, surjective, and open (which is satisfied, in particular, if $\pi$ is a homeomorphism), $\F$ is a candidate sequence on $K$, and $\H = \F \circ \pi$. Then $u_{\gamma}^{\H} = u_{\gamma}^{\F} \circ \pi$.
\item Suppose $\H$ is a harmonic candidate sequence on a metrizable Choquet simplex $M$. Then $u_{\gamma}^{\H}$ is concave for all $\gamma$, and since $u_{\gamma}^{\H}$ is u.s.c., $u_{\gamma}^{\H}$ is also supharmonic. In particular, if $(X,T)$ is a topological dynamical system, then there exists a harmonic entropy structure $\H(T)$ for $T$ \cite{D}, and therefore $u_{\gamma}^{\H(T)}$ is concave and supharmonic for all $\gamma$.
 \end{enumerate}
\end{fact}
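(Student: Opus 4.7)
The strategy for all five parts is transfinite induction on $\gamma$. The base case $\gamma = 0$ is immediate since $u_0^\H \equiv 0$, and the limit-ordinal step follows routinely from the definition $u_\al^\H = \widetilde{\sup_{\beta < \al} u_\beta^\H}$ together with monotonicity of $\sup$ and of the u.s.c.~envelope under whichever operation appears in the given statement. Essentially all the actual work occurs in the successor step $\gamma = \al+1$, where $u_{\al+1}^\H = \lim_k \widetilde{u_\al^\H + \tau_k}$ with $\tau_k = h - h_k$, and one must understand how the u.s.c.~envelope interacts with restriction to an open set, with inclusion into a larger ambient space, with composition by $\pi$, and with addition of affine functions.

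For (1) and (2), the key observation is that $\widetilde{f}(x) = \limsup_{y \to x} f(y)$ depends only on the germ of $f$ at $x$. For (1), this immediately yields the equality once the induction hypothesis is strengthened to the uniform version that $u_\gamma^\H(y) = u_\gamma^{\H|_U}(y)$ holds at \emph{every} $y \in U$, which is necessary because the successor step involves a $\limsup$ over nearby $y$. Fact (2) is the analogous monotonicity statement: restricting the ambient space to $M_2 \subset M_1$ can only decrease the u.s.c.~envelope at $x$, since fewer sequences $y_n \to x$ with $y_n \in M_2$ are available, and the induction propagates identically.

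For (3), the central observation is that continuity of $\pi$ sends $y_n \to x$ to $\pi(y_n) \to \pi(x)$, yielding $\widetilde{f \circ \pi}(x) \leq \widetilde{f}(\pi(x))$ for any bounded $f$; combined with the identity $\tau_k^\H = \tau_k^\F \circ \pi$, a routine transfinite induction gives the desired inequality. For (4), openness of $\pi$ supplies the reverse sequence lift: given $z_n \to \pi(x)$ in $K$, fix a nested basis $V_1 \supset V_2 \supset \cdots$ of open neighborhoods of $x$ with $\diam V_j \to 0$; since each $\pi(V_j)$ is open and contains $\pi(x)$, one can select $y_n \in V_{j(n)} \cap \pi^{-1}(z_n)$ with $j(n) \to \infty$, producing $y_n \to x$ with $\pi(y_n) = z_n$. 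This yields the reverse inequality $\widetilde{f}(\pi(x)) \leq \widetilde{f \circ \pi}(x)$, and the inductions for (3) and (4) run in parallel.

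For (5), the induction tracks concavity. Since each $h_k$ is harmonic, applying the harmonic identity to the two-point measure $\lb \delta_x + (1-\lb)\delta_y$ shows $h_k$ is affine; hence $h = \lim_k h_k$ is affine and each $\tau_k$ is affine, so $u_\al^\H + \tau_k$ is concave whenever $u_\al^\H$ is. Two short facts then close the induction: the u.s.c.~envelope of a concave function is concave (choose $x_n \to x$ and $y_n \to y$ with $f(x_n) \to \widetilde f(x)$ and $f(y_n) \to \widetilde f(y)$, note that $\lb x_n + (1-\lb) y_n \to \lb x + (1-\lb) y$, and apply concavity of $f$), and a pointwise monotone limit of concave functions is concave, which handles both the $\lim_k$ in the successor step and the $\sup_{\beta<\al}$ in the limit step. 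Since $u_\gamma^\H$ is u.s.c.~and concave on a Choquet simplex, it is supharmonic by the remark preceding the statement, and the final sentence of (5) then uses the existence of a harmonic entropy structure \cite{D}. The main (modest) obstacle throughout is to strengthen the induction hypotheses in (1)--(4) to hold uniformly on the relevant neighborhood or subset rather than only at the distinguished point, so that the successor step can invoke the hypothesis at the nearby points produced by the $\limsup$.
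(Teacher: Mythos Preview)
Your proposal is correct and follows essentially the same approach as the paper. The paper does not give a detailed proof: it states that (1)--(4) ``are easily checked from the definitions'' and that (5) ``follows from the fact that the u.s.c.\ envelope of a concave function is concave and the limit of concave functions is concave'' (with a reference to \cite{BM}); your transfinite induction argument is precisely the elaboration of these remarks, including the observation that harmonic functions are affine and hence the $\tau_k$ are affine.
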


\section{Principal extensions and towers} \label{TowersAndPrincExts}

\begin{defn}
Let $(X,T)$ be a factor of $(Y,S)$ with factor map $\pi$. The system $(Y,S)$ is a \textbf{principal extension} of $(X,T)$ if $h^T(\pi \mu) = h^S(\mu)$ for all $\mu$ in $M(Y,S)$.
\end{defn}
If $(Y,S)$ is a principal extension of $(X,T)$, then we may refer to $S$ as a principal extension of $T$. The following fact is a basic result in the theory of entropy structures.
\begin{fact}[\cite{D}] \label{PrincExtFact}
If $S$ is a principal extension of $T$ with factor map $\pi$ and $\H(T)$ is an entropy structure for $T$, then $\H(T) \circ \pi$ is an entropy structure for $S$.
\end{fact}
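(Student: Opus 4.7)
The plan is to reduce the statement to a comparison of the canonical functional candidate sequences and then exploit the principal extension hypothesis. Let $\pi_\ast : M(Y,S) \to M(X,T)$ denote the map induced on invariant measures by $\pi$, so that $\H(T) \circ \pi$ means $\H(T) \circ \pi_\ast$ in the notation of the paper. Since $\pi_\ast$ is continuous and composition with a continuous map preserves the inequalities defining uniform domination, we have $\H(T) \circ \pi_\ast \cong \H^{\textnormal{fun}}(T) \circ \pi_\ast$. Thus it suffices to establish
\[
\H^{\textnormal{fun}}(T) \circ \pi_\ast \cong \H^{\textnormal{fun}}(S).
\]

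For the inequality $\H^{\textnormal{fun}}(T) \circ \pi_\ast \leq \H^{\textnormal{fun}}(S)$, I would fix an increasing sequence $(\F_k)$ of finite function families on $X$ defining $\H^{\textnormal{fun}}(T)$. The pulled-back families $\F_k \circ \pi$ on $Y$ induce partitions $\cA_{\F_k \circ \pi} = (\pi \times \Id)^{-1}(\cA_{\F_k})$ of $Y \times [0,1]$, and the factor identity for partition entropy gives $h^{S \times \Id}(\mu \times \lambda, \cA_{\F_k \circ \pi}) = h^{T \times \Id}(\pi_\ast \mu \times \lambda, \cA_{\F_k})$. Enlarging each $\F_k \circ \pi$ to a family $\G_k$ on $Y$ whose partitions eventually separate points, while keeping $(\G_k)$ increasing, yields a valid realization of $\H^{\textnormal{fun}}(S)$, and monotonicity of partition entropy delivers $h_k^{\textnormal{fun}}(T) \circ \pi_\ast \leq h_k^{\textnormal{fun}}(S)$ for every $k$.

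The reverse direction $\H^{\textnormal{fun}}(S) \leq \H^{\textnormal{fun}}(T) \circ \pi_\ast$ is where the principal extension hypothesis enters and is the main obstacle. Given $k$ and $\epsilon > 0$, we seek $\ell$ with $h^{\textnormal{fun}}_k(S)(\mu) \leq h^{\textnormal{fun}}_\ell(T)(\pi_\ast \mu) + \epsilon$ for every $\mu \in M(Y,S)$. An Abramov--Rokhlin style estimate applied to the join $\cA_{\G_k} \vee (\pi \times \Id)^{-1}(\cA_{\F_\ell})$ yields
\[
h^{S \times \Id}(\mu \times \lambda, \cA_{\G_k}) \leq h^{T \times \Id}(\pi_\ast \mu \times \lambda, \cA_{\F_\ell}) + r_{k,\ell}(\mu),
\]
where $r_{k,\ell}(\mu)$ is the conditional entropy of $\cA_{\G_k}$ relative to the $\sigma$-algebra generated by the full $(S \times \Id)$-orbit of $(\pi \times \Id)^{-1}(\cA_{\F_\ell})$. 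Since the $\F_\ell$ eventually separate points of $X$, this conditioning $\sigma$-algebra increases to the full $\pi$-preimage $\sigma$-algebra as $\ell \to \infty$, and the principal hypothesis $h^S(\mu) = h^T(\pi_\ast \mu)$ forces the relative entropy of $S$ over $T$ to vanish, driving $r_{k,\ell}(\mu) \to 0$ pointwise. The hard step is upgrading this pointwise vanishing to the uniform bound $\sup_\mu r_{k,\ell}(\mu) < \epsilon$ demanded by uniform equivalence; this uniformity is established by Downarowicz in \cite{D} through a topological characterization of principal extensions in terms of the vanishing of an appropriate tail entropy of $\pi$, which is precisely the mechanism by which a measure-theoretic fiber condition translates into a uniform statement on $M(Y,S)$.
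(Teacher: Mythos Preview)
The paper does not prove this statement at all: it is recorded as a \emph{Fact} with a bare citation to \cite{D} and no argument is given. So there is no ``paper's own proof'' to compare against; the paper simply imports the result.

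Your outline is a reasonable reconstruction of the argument one finds in \cite{D}: reduce to the functional entropy structures, handle the easy direction by pulling back function families, and for the hard direction invoke an Abramov--Rokhlin decomposition together with the vanishing of relative (conditional) entropy under a principal extension. You also correctly flag that the crux is passing from pointwise vanishing of the conditional term $r_{k,\ell}(\mu)$ to the \emph{uniform} vanishing required by uniform equivalence, and that this is precisely where the principal hypothesis is leveraged in \cite{D} via a topological tail-entropy characterization. As written, however, your argument is not self-contained on exactly this point: you assert the uniformity but defer its justification back to \cite{D}, which is the same reference the paper cites for the whole fact. If your goal is an independent proof, you would need to supply that uniformity argument (e.g., by showing that $r_{k,\ell}$ is upper semi-continuous in $\mu$ and decreases to zero, then invoking Dini, or by reproducing the conditional topological entropy argument). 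If your goal is merely to explain why the fact is plausible and where the work lies, your sketch does that well.
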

\begin{lem} \label{PrincExtLemma}
Let $(X,f)$ be a topological dynamical system. Suppose there exists a compact subset $C$ of $M(X,f)$ such that for each ordinal $\gamma$ and each measure $\mu$ in $M(X,f)$,
\begin{equation} \label{HoffHypothesis}
 u_{\gamma}^{\H(f)}(\mu) = \int_{C} u_{\gamma}^{\H(f)|_{C}} \; d\P_{\mu}.
\end{equation}
Let $(Y,F)$ be a principal extension of $(X,f)$ with factor map $\pi$ and induced map $M(Y,F) \to M(X,f)$ also denoted by $\pi$. Suppose that $\pi|_{\pi^{-1}(C)}$ is a homeomorphism onto $C$. Then for each ordinal $\gamma$ and each measure $\nu$ in $M(Y,F)$,
\begin{equation*}
u_{\gamma}^{\H(F)}(\nu) = \int_{\pi^{-1}(C)} u_{\gamma}^{\H(F)|_{\pi^{-1}(C)}} \; d\P_{\nu} = u_{\gamma}^{\H(f)}(\pi(\nu)).
\end{equation*}
\end{lem}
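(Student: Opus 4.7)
My plan is to exploit the principal extension to identify the two entropy structures, transfer the integral representation across the homeomorphism on $\pi^{-1}(C)$, and then sandwich $u_\gamma^{\H(F)}(\nu)$ between the ``lift'' upper bound from Fact \ref{TransSeqFacts}(3) and the supharmonic lower bound from Fact \ref{TransSeqFacts}(5).

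First, by Fact \ref{PrincExtFact} I may take $\H(F) := \H(f) \circ \pi$ as an entropy structure for $F$, choosing at the outset a harmonic representative of $\H(f)$ so that $\H(F)$ is harmonic as well. Restricting to $\pi^{-1}(C)$ gives $\H(F)|_{\pi^{-1}(C)} = \H(f)|_C \circ \pi|_{\pi^{-1}(C)}$, and since $\pi|_{\pi^{-1}(C)}$ is a homeomorphism onto $C$ by hypothesis, Fact \ref{TransSeqFacts}(4) yields
\[
u_\gamma^{\H(F)|_{\pi^{-1}(C)}}(\sigma) = u_\gamma^{\H(f)|_C}(\pi\sigma) \text{ for all } \sigma \in \pi^{-1}(C).
\]

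Next I establish the second equality of the conclusion. Factor maps of topological dynamical systems push ergodic measures forward to ergodic measures (the preimage of an $f$-invariant set is $F$-invariant), so $\pi_*\P_\nu$ is concentrated on the ergodic measures of $M(X,f)$; a short continuous-affine computation shows its barycenter is $\pi\nu$. Choquet's characterization of the simplex $M(X,f)$ then forces $\pi_*\P_\nu = \P_{\pi\nu}$. Combining the display above with the hypothesis of the lemma and the change-of-variables formula for pushforward measures,
\[
\int_{\pi^{-1}(C)} u_\gamma^{\H(F)|_{\pi^{-1}(C)}} \, d\P_\nu = \int_C u_\gamma^{\H(f)|_C} \, d\P_{\pi\nu} = u_\gamma^{\H(f)}(\pi\nu).
\]

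Finally, I obtain the first equality by a pinch. Fact \ref{TransSeqFacts}(3) applied to $\H(F) = \H(f) \circ \pi$ gives $u_\gamma^{\H(F)}(\nu) \leq u_\gamma^{\H(f)}(\pi\nu)$. In the reverse direction, the harmonic representative makes $u_\gamma^{\H(F)}$ supharmonic by Fact \ref{TransSeqFacts}(5), so together with Fact \ref{TransSeqFacts}(2) and non-negativity of $u_\gamma$,
\[
u_\gamma^{\H(F)}(\nu) \geq \int u_\gamma^{\H(F)} \, d\P_\nu \geq \int_{\pi^{-1}(C)} u_\gamma^{\H(F)|_{\pi^{-1}(C)}} \, d\P_\nu = u_\gamma^{\H(f)}(\pi\nu).
\]
These two inequalities pinch $u_\gamma^{\H(F)}(\nu)$ to equal $u_\gamma^{\H(f)}(\pi\nu)$, and both then equal the displayed integral. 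The main step demanding care is the identification $\pi_*\P_\nu = \P_{\pi\nu}$, which relies on both the preservation of ergodicity by factor maps and the uniqueness of the ergodic decomposition in the Choquet simplex $M(X,f)$; once this is in place, the rest is a direct assembly of the facts from Section \ref{Rel2DynSys}.
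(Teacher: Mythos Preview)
Your proof is correct and follows essentially the same route as the paper's: both take $\H(F)=\H(f)\circ\pi$ via Fact~\ref{PrincExtFact}, use Fact~\ref{TransSeqFacts}(4) on the homeomorphism $\pi|_{\pi^{-1}(C)}$, and then pinch $u_\gamma^{\H(F)}(\nu)$ between the upper bound from Fact~\ref{TransSeqFacts}(3) and the supharmonic lower bound from Fact~\ref{TransSeqFacts}(5) combined with monotonicity. The only notable difference is that you make explicit the identification $\pi_*\P_\nu=\P_{\pi\nu}$ (via preservation of ergodicity under factor maps and Choquet uniqueness), which the paper uses silently in passing from $\int_{\pi^{-1}(C)} u_\gamma^{\H(f)|_C}\circ\pi\,d\P_\nu$ to $\int_C u_\gamma^{\H(f)|_C}\,d\P_{\pi\nu}$.
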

\begin{proof}
Let $\H(f)$ be an entropy structure for $f$, and let $\H(F) = \H(f) \circ \pi$, which is an entropy structure for $F$ by Fact \ref{PrincExtFact}. By monotonicity (Fact \ref{TransSeqFacts} (2)), $u_{\gamma}^{\H(F)}(x) \geq u_{\gamma}^{\H(F)|_{\pi^{-1}(C)}}(x)$ for all $x$ in $\pi^{-1}(C)$. Since $\pi|_{\pi^{-1}(C)}$ is a homeomorphism onto $C$, $u_{\gamma}^{\H(F)|_{\pi^{-1}(C)}} = u_{\gamma}^{\H(f)|_{C}} \circ \pi$ (Fact \ref{TransSeqFacts} (4)). Combining these facts with Equation (\ref{HoffHypothesis}) and the fact that $u_{\gamma}^{\H(F)}$ is concave (Fact \ref{TransSeqFacts} (5)), we obtain that for all ordinals $\gamma$ and all $\nu$ in $M(Y,F)$,
\begin{align*}
u_{\gamma}^{\H(F)}(\nu) & \geq \int_{\pi^{-1}(C)} u_{\gamma}^{\H(F)|_{\pi^{-1}(C)}} \; d\P_{\nu} =  \int_{\pi^{-1}(C)} u_{\gamma}^{\H(f)|_{C}} \circ \pi \; d\P_{\nu} \\
 & = \int_{C} u_{\gamma}^{\H(f)|_{C}} \; d\P_{\pi\nu}  = u_{\gamma}^{\H(f)} ( \pi\nu).
\end{align*}
Since $\pi$ is continuous and surjective, $u_{\gamma}^{\H(F)} \leq u_{\gamma}^{\H(f)} \circ \pi$ (Fact \ref{TransSeqFacts} (3)). Combining the above inequalities, we obtain that $u_{\gamma}^{\H(F)} = u_{\gamma}^{\H(f)} \circ \pi$, and all of the above inequalities are equalities. This concludes the proof of the lemma.
\end{proof}

Now we turn our attention to simple towers. We begin with a definition.
\begin{defn}
Let $(X,T)$ be a topological dynamical system. Let $n$ and $p$ be natural numbers with $p \leq n$. Let $Y = X \times \{0, \dots, n-1\}$. We define a map $S : Y \rightarrow Y$ as follows. Let $S(x,i) = (x,i+1)$ for all $x$ in $X$ and $i \in \{0, \dots, n-1\}$. For each $x$ in $X$, let $S(x,n-1)=(T^p(x),0)$. We will refer to $(Y,S)$ (or possibly just $S$) as an $(n,p)$ \textbf{tower over} $(X,T)$ (or possibly just $T$).
\end{defn}
\begin{notation} \label{TowerNotation}
Suppose $(Y,S)$ is an $(n,p)$ tower over $(X,T)$. Let $Y_0 = X \times \{0\}$, and note that $Y_0$ is invariant under $S^n$. Let $\pi_1 : M(Y,S) \rightarrow M(Y_0,S^n|_{Y_0})$ be the map given by $\mu \mapsto \mu|_{Y_0}$. Let $\pi_2 : Y_0 \to X$ be projection onto $X$. With $\pi_2$ as the factor map, $(Y,S^n|_{Y_0})$ is a principal extension over $(X,T^p)$. Note that the maps $\pi_1$ and $\pi_2$ on measures are affine homeomorphisms. Further, recall that if $\mu$ is in $M(Y,S)$, then the measure-preserving systems $(S,\mu)$ and $(T^p,\pi_2 \circ \pi_1 (\mu))$ are measure-theoretically isomorphic. Let $\pi_3: M(Y,T^p) \rightarrow M(X,T)$ be the map $\pi_3(\mu) = \frac{1}{p} \sum_{i=0}^{p-1} T^i \mu$.
\end{notation}
\begin{defn} \label{TowerMapDef}
If $S$ is a tower over $T$ with notation as above, then the map $\psi = \pi_3 \circ \pi_2 \circ \pi_1$ will be referred to as \textbf{the map associated to the tower} $S$ \textbf{over} $T$.
\end{defn}

\begin{lem} \label{PowerESLemma}
If $(X,T)$ is a topological dynamical system with entropy structure $\H(T)$, then $p \H(T) \circ \pi_3$ is an entropy structure for $T^p$, where $\pi_3 : M(X,T^p) \to M(X,T)$ is defined by $\pi_3(\mu) = \frac{1}{p} \sum_{i=0}^{p-1} T^i \mu$.
\end{lem}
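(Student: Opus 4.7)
The plan is to construct an entropy structure for $T^p$ that coincides with $p \H(T) \circ \pi_3$ on the nose. Since the conclusion depends only on the uniform equivalence class of $\H(T)$, I may take $\H(T) = \H^{\textnormal{fun}}(T) = (h_k)$, built from an increasing sequence $\F_k$ of finite sets of continuous functions $X \to [0,1]$ whose partitions $\cA_{\F_k}$ separate points, so $h_k(\mu) = h^{T \times \Id}(\mu \times \lambda, \cA_{\F_k})$. First I would enlarge each $\F_k$ to $\F'_k = \{f \circ T^i : f \in \F_k,\ 0 \leq i \leq p-1\}$; the $\F'_k$ are still increasing and their partitions $\cA_{\F'_k} = \bigvee_{i=0}^{p-1}(T \times \Id)^{-i}\cA_{\F_k}$ still separate points. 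Hence the candidate sequence $(g_k)$ on $M(X,T^p)$ given by $g_k(\nu) = h^{T^p \times \Id}(\nu \times \lambda, \cA_{\F'_k})$ is an entropy structure for $T^p$, namely $\H^{\textnormal{fun}}(T^p)$ built from the family $\F'_k$.

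The identity $g_k = p\, h_k \circ \pi_3$ on $M(X,T^p)$ would then be established in two steps. For $\mu \in M(X,T) \subseteq M(X,T^p)$, the product $\mu \times \lambda$ is $(T \times \Id)$-invariant, and comparing the $n$-fold refinements under $T^p \times \Id$ and $T \times \Id$ yields the Abramov-type identity
\[
h^{T^p \times \Id}\!\left(\mu \times \lambda,\ \bigvee_{i=0}^{p-1}(T \times \Id)^{-i}\cA_{\F_k}\right) = p \cdot h^{T \times \Id}(\mu \times \lambda, \cA_{\F_k}),
\]
so $g_k(\mu) = p\, h_k(\mu)$. For arbitrary $\nu \in M(X,T^p)$, I would exploit $\pi_3(\nu) \times \lambda = \frac{1}{p}\sum_{i=0}^{p-1}(T^i\nu) \times \lambda$ together with the affinity of $h^{T^p \times \Id}(\,\cdot\,, \cA_{\F'_k})$ in the $T^p \times \Id$-invariant measure to reduce matters to the equalities $g_k(T^i\nu) = g_k(\nu)$ for all $i$. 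This last point holds because $(T \times \Id)^{-i}\cA_{\F'_k}$ and $\cA_{\F'_k}$ differ, after further $n$-fold refinement under $T^p \times \Id$, by at most $2(p-1)$ factors of $\cA_{\F_k}$, contributing a uniformly bounded amount to $H_{\nu \times \lambda}$ that vanishes upon dividing by $n$. Combining the two steps gives $g_k(\nu) = g_k(\pi_3\nu) = p\, h_k(\pi_3\nu)$, proving the lemma.

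I expect the main obstacle to be precisely the translation invariance $g_k(T^i\nu) = g_k(\nu)$ on merely $T^p$-invariant measures. This is delicate because $\cA_{\F'_k}$ is tailored for $T^p$-refinement while the translation $(T \times \Id)^{-i}$ shifts the indices of the $T$-iterates defining $\cA_{\F'_k}$; the identity only emerges after comparing the two $T^p \times \Id$-refinements and absorbing the bounded endpoint correction. Once this translation invariance has been secured, the rest of the proof is a direct consequence of Abramov's formula and the affinity of entropy in the measure.
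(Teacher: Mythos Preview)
Your proposal is correct and takes a genuinely different route from the paper. The paper does not work with $\H^{\textnormal{fun}}$ at all: it passes to a zero-dimensional principal extension $(X',T')$ of $(X,T)$ (borrowed from \cite{BD}), takes a refining sequence of clopen partitions $\{P_k\}$ on $X'$, and argues that the partition entropy structures for $T'$ and $(T')^p$ are related by $h^{(T')^p}(\mu,P_k)=p\,h^{T'}(\pi_3\mu,P_k)$; it then descends back to $(X,T)$ and $(X,T^p)$ via the principal-extension fact (Fact~\ref{PrincExtFact}) and the commutation $\pi\circ\pi_3=\pi_3\circ\pi$. Your argument instead stays on $X$, enlarges the function families to $\F'_k$ so that $\cA_{\F'_k}=\bigvee_{i=0}^{p-1}(T\times\Id)^{-i}\cA_{\F_k}$, and establishes the pointwise identity $g_k=p\,h_k\circ\pi_3$ directly via Abramov on $T$-invariant measures together with the affinity/translation-invariance reduction you describe.

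Two remarks on what each approach buys. The paper's route is conceptually short once one accepts the zero-dimensional principal extension as a black box, but that is a nontrivial external input, and the displayed pointwise equality $h^{(T')^p}(\mu,P_k)=p\,h^{T'}(\pi_3\mu,P_k)$ is not literally correct for a fixed $k$ (take $T'$ the full $2$-shift, $P_k$ the time-zero partition); what actually holds is uniform equivalence, obtained by sandwiching with the refinement $\bigvee_{i=0}^{p-1}(T')^{-i}P_k$ and then absorbing that into some later $P_{k'}$. Your enlargement $\F_k\mapsto\F'_k$ is exactly the device that makes the identity hold on the nose, so your argument is both self-contained (no principal extensions needed) and more precise at the level of individual terms. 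The translation-invariance step you flag as the main obstacle is fine: since $T\times\Id$ and $T^p\times\Id$ commute, one has $g_k(T^i\nu)=h^{T^p\times\Id}(\nu\times\lambda,(T\times\Id)^{-i}\cA_{\F'_k})$, and the $(T^p\times\Id)$-refinements of $\cA_{\F'_k}$ and $(T\times\Id)^{-i}\cA_{\F'_k}$ differ by a bounded number of copies of $\cA_{\F_k}$, exactly as you say.
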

\begin{proof}
It is shown in \cite{BD} that every finite entropy dynamical system has a zero-dimensional principal extension. (In fact, \cite{BD} deals only with homeomorphisms, but the natural extension $\overline{T}$ of a continuous surjection $T$ is a homeomorphism and a principal extension of $T$, and then applying \cite{BD} to $\overline{T}$ yields a zero-dimensional principal extension of $T$.) Applying this fact to $(X,T)$, we fix a zero-dimensional principal extension $(X',T')$ of $(X,T)$ with factor map $\pi$. Then $(X',(T')^p)$ is a zero-dimensional principal extension of $(X,T^p)$ with factor map $\pi$. We let $\pi_3$ denote the averaging map from $M(X',(T')^p)$ to $M(X',T')$ as well as the averaging map from $M(X,T^p)$ to $M(X,T)$. Note that $\pi \circ \pi_3 = \pi_3 \circ \pi$. Now let $\H(T)$ be an entropy structure for $T$ and let $\H(T^p)$ be an entropy structure for $T^p$. We prove the lemma by showing that $\H(T^p)$ is uniformly equivalent to $p \H(T) \circ \pi_3$. Since $(X',T')$ is zero dimensional, there exists a refining sequence $\{P_k\}_k$ of clopen partitions of $X'$ with diameters tending to $0$. Let $\H(T') = (h^{T'}_k)$ and $\H((T')^p)=(h^{(T')^p}_k)$ be the entropy structures (for $T'$ and $(T')^p$ respectively) defined by this sequence of partitions, \textit{i.e.} $h^{T'}_k(\mu) = h^{T'}(\mu,P_k)$ and $h^{(T')^p}_k(\mu) = h^{(T')^p}(\mu,P_k)$. Then for any $\mu$ in $M(X',(T')^p)$, we have that $h^{(T')^p}_k(\mu) = h^{(T')^p}(\mu,P_k) = p h^{T'}(\pi_3(\mu),P_k) = p h^{T'}_k(\pi_3(\mu))$. Thus $\H((T')^p)$ is uniformly equivalent to $p \H(T') \circ \pi_3$. Since $T'$ is a principal extension of $T$ and $(T')^p$ is a principal extension of $T^p$, both with factor map $\pi$, we have that $\H(T') \cong \H(T) \circ \pi$ and $\H((T')^p) \cong \H(T^p) \circ \pi$. Combining these facts, we obtain that $p \H(T) \circ \pi \circ \pi_3 \cong \H(T^p) \circ \pi$. Since $\pi \circ \pi_3 = \pi_3 \circ \pi$, we see that $p \H(T) \circ \pi_3 \circ \pi \cong \H(T^p) \circ \pi$. Using this fact and the definition of uniform equivalence, we see that $p \H(T) \circ \pi_3 \cong \H(T^p)$, which finishes the proof of the lemma.
\end{proof}

\begin{lem} \label{TowerLemmaOne}
If $S$ is an $(n,p)$ tower over $T$ with associated map $\psi$ and $\H(T)$ is an entropy structure for $T$, then $\frac{p}{n} \H(T) \circ \psi$ is an entropy structure for $S$.
\end{lem}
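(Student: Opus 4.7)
The strategy is to decompose $\psi = \pi_3\circ\pi_2\circ\pi_1$ as in Notation \ref{TowerNotation} and build the entropy structure of $S$ out of that of $T$ in three steps, passing through the intermediate systems $(Y_0, S^n|_{Y_0})$ and $(X, T^p)$. At the level of entropy values one has
\[
h^S(\mu) \;=\; \tfrac{1}{n}\, h^{S^n|_{Y_0}}(\pi_1\mu) \;=\; \tfrac{1}{n}\, h^{T^p}(\pi_2\pi_1\mu) \;=\; \tfrac{p}{n}\, h^T(\psi\mu),
\]
by Abramov's power formula, affinity of entropy across the $n$ disjoint pieces $Y_i$ that $S^n$ fixes setwise, and the scalar identity underlying Lemma \ref{PowerESLemma}; the task is to promote this into an equivalence of candidate sequences.

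To have access to explicit partitions I first reduce to the zero-dimensional case, using the device from the proof of Lemma \ref{PowerESLemma}: by \cite{BD}, I fix a zero-dimensional principal extension $(X',T')$ of $(X,T)$ with factor map $\pi$, form the $(n,p)$ tower $(Y',S')$ over $(X',T')$, and let $\hat\pi : Y'\to Y$ be the map $\hat\pi(x',i) = (\pi x',i)$. A direct diagram chase shows that $\hat\pi$ is a continuous surjection conjugating $S'$ to $S$, and that at the level of invariant measures $\psi\circ\hat\pi = \pi\circ\psi'$, where $\psi'$ is the map associated to the tower $S'$ over $T'$. Combining this commutation with the principality of $\pi$ and the scalar identity above yields $h^{S'}(\nu) = h^S(\hat\pi\nu)$, so $\hat\pi$ is itself a principal extension.

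In the zero-dimensional world the key partition computation becomes tractable. I choose a refining sequence $\{P_k\}$ of clopen partitions of $X'$, so that $\H(T') = (h^{T'}(\cdot,P_k))$ is an entropy structure for $T'$, and lift to the clopen partition $\hat P_k = \{A\times\{i\} : A\in P_k,\ 0\le i\le n-1\}$ of $Y'$, which is refining and hence gives an entropy structure $\hat\H(S') = (h^{S'}(\cdot,\hat P_k))$ for $S'$. For $\mu\in M(Y',S')$, the fact that $(S')^n$ fixes each level $Y_i'$ setwise and that $\hat P_k|_{Y_i'}$ corresponds under the conjugation $(S')^i : Y_0' \to Y_i'$ to $P_k$, combined with the elementary identity $h^T(\cdot, Q\vee T^{-j}Q) = h^T(\cdot, Q)$ for any partition $Q$ and any $j\in\N$, leads to
\[
n\, h^{S'}(\mu,\hat P_k) \;=\; h^{(S')^n|_{Y_0'}}(\pi_1'\mu,P_k) \;=\; h^{(T')^p}(\pi_2'\pi_1'\mu,P_k),
\]
and the exact equality $h^{(T')^p}(\nu,P_k) = p\, h^{T'}(\pi_3'\nu,P_k)$ isolated inside the proof of Lemma \ref{PowerESLemma} then delivers $\hat\H(S') = \tfrac{p}{n}\,\H(T')\circ\psi'$ as candidate sequences on $M(Y',S')$.

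To finish I invoke Fact \ref{PrincExtFact} for the principal extensions $\pi$ and $\hat\pi$, obtaining $\H(T')\cong\H(T)\circ\pi$ and $\H(S')\cong\H(S)\circ\hat\pi$. Combining these with the equality just established and with $\psi\circ\hat\pi = \pi\circ\psi'$ gives
\[
\H(S)\circ\hat\pi \;\cong\; \tfrac{p}{n}\,\H(T)\circ\psi\circ\hat\pi
\]
on $M(Y',S')$; since $\hat\pi : M(Y',S') \to M(Y,S)$ is surjective (the spatial map being continuous and surjective), the uniform equivalence descends to $\H(S)\cong\tfrac{p}{n}\,\H(T)\circ\psi$ on $M(Y,S)$, as required. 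The main technical obstacle is the middle partition computation, which requires a careful level-by-level analysis of the refinement $\bigvee_{i=0}^{n-1}(S')^{-i}\hat P_k$ together with the Abramov power formula for partitions; everything else is a routine assembly of previously established facts.
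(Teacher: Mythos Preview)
Your argument is essentially correct, but it takes a longer route than the paper's. The paper's proof is a three-line assembly that never touches partitions directly: it invokes Theorem~5.0.3~(3) of \cite{D} to get that $\frac{1}{n}\H(S^n|_{Y_0})\circ\pi_1$ is an entropy structure for $S$, then Fact~\ref{PrincExtFact} (applied to the conjugacy $\pi_2$) to get that $\H(T^p)\circ\pi_2$ is an entropy structure for $S^n|_{Y_0}$, and finally Lemma~\ref{PowerESLemma} for the $T^p$-to-$T$ step. Chaining these gives the result immediately.

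What you do instead is bypass the citation to \cite{D} by reducing to a zero-dimensional tower $(Y',S')$ and computing $h^{S'}(\mu,\hat P_k)$ explicitly in terms of $h^{T'}(\psi'\mu,P_k)$, exactly paralleling the method used in the proof of Lemma~\ref{PowerESLemma}. This is self-contained and has the virtue of not relying on an external black box, but the ``middle partition computation'' you flag is genuinely delicate: one must track how $\bigvee_{j=0}^{n-1}(S')^{-j}\hat P_k$ restricts to each level $Y_i'$ and use Abramov's formula at the partition level together with the affinity of entropy across the $(S')^n$-invariant levels. This works, but it is essentially reproving the cited theorem from \cite{D} in the special case of discrete towers. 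The paper's approach buys brevity and modularity; yours buys independence from \cite{D} at the cost of a nontrivial computation that you have sketched but not fully carried out.
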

\begin{proof}
We use Notation \ref{TowerNotation}. Note that the maps $\pi_1, \pi_2$ and $\pi_3$ are each continuous and affine. For any entropy structure $\H(S^n)$ of $S^n$, we have that $\frac{1}{n} \H(S^n) \circ \pi_1$ is an entropy structure for $S$ (Theorem 5.0.3 (3) in \cite{D}). If $\H(T^p)$ is an entropy structure for $T^p$, then $\H(T^p) \circ \pi_2$ is an entropy structure for $S^n$ by Fact \ref{PrincExtFact}, since $S^n$ is a principal extension of $T^p$ with factor map $\pi_2$. By Lemma \ref{PowerESLemma}, we have that if $\H(T)$ is an entropy structure for $T$, then $p \H(T) \circ \pi_3$ is an entropy structure for $T^p$. Combining these facts, we obtain that if $\H(T)$ is an entropy structure for $T$, then $\frac{p}{n} \H(T) \circ \psi$ is an entropy structure for $S$.
\end{proof}

\begin{lem} \label{TowerLemmaThree}
Let $(Y,S)$ be an $(n,p)$ tower over $(X,T)$ with associated map $\psi : M(Y,S) \to M(X,T)$. Let $\{\theta_m\}_m$ be a sequence of periodic orbits of $T$. Let $C(T) = \cap_{n=1}^{\infty} \overline{ \cup_{m \geq n} \{\mu_{\theta_m}\} } $. Suppose that each measure in $C(T)$ is totally ergodic. Further suppose that for all $\mu$ in $M(X,T)$ and all ordinals $\al$,
\begin{equation} \label{TowerLemmaThreeHyp}
u_{\al}^{\H(T)}(\mu) = \int_{C(T)} u_{\al}^{\H(T)|_{C(T)}} \; d\P_{\mu}.
\end{equation}
Let $\{\Theta_{\ell}\}$ be an enumeration of the $S$-periodic orbits in $\cup_m \theta_m \times \{ 0, \dots, n-1\}$, and let $C(S) = \cap_{n=1} \overline{ \cup_{\ell \geq n} \{ \mu_{\Theta_{\ell}} \} }$. Then
\begin{enumerate}
\item each $\nu$ in $C(S)$ is totally ergodic;
\item $\psi$ maps $C(S)$ homeomorphically onto $C(T)$;
\item for all $\nu$ in $M(Y,S)$ and all ordinals $\al$
\begin{equation*}
u_{\al}^{\H(S)}(\nu) = \int_{C(S)} u_{\al}^{\H(S)|_{C(S)}} \; d\P_{\nu} = \frac{p}{n} u_{\al}^{\H(T)}(\psi (\nu)).
\end{equation*}
\end{enumerate}
\end{lem}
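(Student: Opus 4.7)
The plan is to establish the three conclusions in the order (2), (1), (3), with the homeomorphism statement doing most of the geometric work.

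For (2), I would first verify by direct computation that $\psi(\mu_{\Theta}) = \mu_{\theta}$ whenever $\Theta$ is an $S$-orbit inside $\theta \times \{0, \dots, n-1\}$ for a $T$-periodic orbit $\theta$; this is an elementary chase through $\pi_1, \pi_2, \pi_3$ of Notation~\ref{TowerNotation}. Continuity then gives $\psi(C(S)) \subseteq C(T)$. For surjectivity, given $\mu \in C(T)$ with $\mu_{\theta_{m_k}} \to \mu$ and $m_k \to \infty$, I pick an $S$-orbit $\Theta_{\ell_k} \subset \theta_{m_k} \times \{0, \dots, n-1\}$; only finitely many $\Theta_{\ell}$ sit over any finite union of $\theta_m$'s, so $\ell_k \to \infty$, and any subsequential limit of $\mu_{\Theta_{\ell_k}}$ belongs to $C(S)$ and maps to $\mu$. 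The key step is injectivity, which reduces to $\pi_3^{-1}(\mu) = \{\mu\}$ for every $\mu \in C(T)$: any $T^p$-ergodic $\tilde\mu$ with $\pi_3(\tilde\mu)=\mu$ must satisfy $T^i\tilde\mu = \tilde\mu$ for all $i$, since otherwise the distinct $T^i\tilde\mu$ would form a nontrivial $T^p$-ergodic decomposition of $\mu$, contradicting $T^p$-ergodicity of $\mu$ (from total ergodicity); hence $\tilde\mu \in M(X,T)$ and $\tilde\mu = \mu$. Since $\pi_2 \pi_1$ is a bijection, $\psi^{-1}(\mu)$ is a singleton, and $\psi|_{C(S)}$ is a continuous bijection between compact Hausdorff spaces, hence a homeomorphism.

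For (1), I invoke the measure-theoretic isomorphism $(S,\nu) \cong (T^p, \pi_2\pi_1(\nu))$ from Notation~\ref{TowerNotation}. For $\nu \in C(S)$ the singleton-fiber argument identifies $\pi_2\pi_1(\nu)$ with $\psi(\nu) \in C(T)$, which is totally ergodic under $T$ and hence $T^{pk}$-ergodic for every $k \ge 1$. Transporting ergodicity through the isomorphism gives that $\nu$ is $S^k$-ergodic for every $k$.

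For (3), I take $\H(S) := \tfrac{p}{n}\H(T) \circ \psi$ as an entropy structure for $S$ via Lemma~\ref{TowerLemmaOne}. Fact~\ref{TransSeqFacts}(3) combined with positive scaling of $u$ furnishes the upper bound $u_{\al}^{\H(S)}(\nu) \le \tfrac{p}{n} u_{\al}^{\H(T)}(\psi(\nu))$. For the matching lower bound, super-harmonicity of $u_{\al}^{\H(S)}$ (Fact~\ref{TransSeqFacts}(5)) combined with Fact~\ref{TransSeqFacts}(2) yields
\[
u_{\al}^{\H(S)}(\nu) \;\ge\; \int_{C(S)} u_{\al}^{\H(S)|_{C(S)}} \, d\P_{\nu}.
\]
On $C(S)$, Fact~\ref{TransSeqFacts}(4) applied to the homeomorphism $\psi|_{C(S)}$ gives $u_{\al}^{\H(S)|_{C(S)}} = \tfrac{p}{n}\, u_{\al}^{\H(T)|_{C(T)}} \circ \psi$. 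To evaluate the integral I use the disintegration $\P_{\psi(\nu)} = \int \P_{\psi(\nu')} \, d\P_{\nu}(\nu')$ (forced by uniqueness of ergodic decomposition on the Choquet simplex $M(X,T)$) together with two observations: for every $\nu' \in M_{\erg}(Y,S)$, $\psi(\nu')$ is itself $T$-ergodic (a $T$-invariant Borel set is $T^p$-invariant, and $\pi_2\pi_1(\nu')$ is $T^p$-ergodic), so $\P_{\psi(\nu')} = \delta_{\psi(\nu')}$; and $\psi(\nu') \in C(T)$ iff $\nu' \in C(S)$ (by the singleton-fiber argument of (2)). Consequently
\[
\int_{C(S)} u_{\al}^{\H(S)|_{C(S)}} \, d\P_{\nu} \;=\; \tfrac{p}{n}\!\int_{C(T)} u_{\al}^{\H(T)|_{C(T)}} \, d\P_{\psi(\nu)} \;=\; \tfrac{p}{n}\, u_{\al}^{\H(T)}(\psi(\nu))
\]
by hypothesis (\ref{TowerLemmaThreeHyp}). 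Matching upper and lower bounds forces equality throughout and yields (3). The main obstacle I anticipate is the ergodic-decomposition bookkeeping in (3): showing that $\psi$ preserves ergodicity of individual measures, and that the single-point fibers of $\psi$ above $C(T)$ force the clean dichotomy $\psi(\nu') \in C(T) \Longleftrightarrow \nu' \in C(S)$ at the level of ergodic measures. Both rest essentially on total ergodicity of measures in $C(T)$, which is what makes the averaging map $\pi_3$ injective above $C(T)$.
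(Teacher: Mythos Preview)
Your proposal is correct and follows essentially the same approach as the paper: the singleton-fiber argument for $\pi_3$ via total ergodicity, the measure-theoretic isomorphism $(S,\nu)\cong(T^p,\pi_2\pi_1(\nu))$ for (1), and for (3) the sandwich between Fact~\ref{TransSeqFacts}(3) (upper bound) and supharmonicity plus Fact~\ref{TransSeqFacts}(4) on the homeomorphism $\psi|_{C(S)}$ (lower bound), closed off by hypothesis~(\ref{TowerLemmaThreeHyp}). Your treatment is in fact more explicit than the paper's on one point: the paper writes the equality $\int_{C(S)} u_{\gamma}^{\H(S)|_{C(S)}}\,d\P_{\nu}=\tfrac{p}{n}\int_{C(T)} u_{\gamma}^{\H(T)|_{C(T)}}\,d\P_{\psi(\nu)}$ without comment, whereas you spell out why $\psi$ carries $\P_{\nu}$ to $\P_{\psi(\nu)}$ (affinity, preservation of ergodicity, and the dichotomy $\nu'\in C(S)\Leftrightarrow\psi(\nu')\in C(T)$).
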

\begin{proof} We use Notation \ref{TowerNotation}. Let $\mu$ be in $C(T)$. Since $\mu$ is invariant for $T$, it is also invariant for $T^p$ and we have $\pi(\mu)=\mu$. Further, $\mu$ is totally ergodic for $T$ by hypothesis, and therefore $\mu$ is totally ergodic for $T^p$. Hence $\mu$ is an extreme point in $M(X,T^p)$. If there were any other measure $\nu$ in $\pi_3^{-1}(\mu)$, then we would have $\mu = \frac{1}{p} \sum_{k=0}^{p-1} T^k \nu$, and thus $\mu$ would be a non-trivial convex combination of measures in $M(X,T^p)$, which would be a contradiction. Hence $\pi_3^{-1}(\mu) = \{\mu\}$. Since $\pi_2 \circ \pi_1: M(Y,S) \to M(X,T^p)$ is a homeomorphism, $(\pi_2 \circ \pi_1)^{-1}(\mu)$ consists of exactly one measure $\nu$. Since $(S,\nu)$ is measure-theoretically isomorphic to $(T^p,\mu)$ and $\mu$ is totally ergodic with respect to $T^p$, we have that $\nu$ is totally ergodic with respect to $S$. Combining these facts, we obtain that $\psi^{-1}(\mu)$ consists of exactly one measure, which is totally ergodic for $S$.

The fact that $\psi^{-1}(\mu)$ consists of exactly one measure for each $\mu$ in $C(T)$ implies that $\psi^{-1}(C(T)) = C(S)$ and that $\psi$ maps $C(S)$ bijectively onto $C(T)$. Since $C(S)$ is compact and $\psi$ is continuous, we conclude that $\psi$ maps $C(S)$ homeomorphically onto $C(T)$, which proves (2). The fact that $\psi^{-1}(\mu)$ is totally ergodic for $S$ implies that each $\nu$ in $C(S)$ is totally ergodic for $S$, proving (1).

Now Lemma \ref{TowerLemmaOne} implies that if $\H(T)$ is an entropy structure for $T$, then $\frac{p}{n}\H(T)\circ \psi$ is an entropy structure for $S$. Since $\psi|_{C(S)}$ is a homeomorphism onto $C(T)$, Fact \ref{TransSeqFacts} (4) implies that $u_{\gamma}^{\H(S)|_{C(S)}} = \frac{p}{n} u_{\gamma}^{\H(T)|_{C(T)}} \circ \psi|_{C(S)}$. Using this fact, as well as Equation (\ref{TowerLemmaThreeHyp}) and Facts \ref{TransSeqFacts} (2), (3), and (5), we obtain that for any $\nu$ in $M(Y,S)$,
\begin{align*}
\frac{p}{n} \int_{C(T)} u_{\gamma}^{\H(T)|_{C(T)}} \; d \P_{\psi(\nu)}
& =  \int_{C(S)} u_{\gamma}^{\H(S)|_{C(S)}} \; d \P_{\nu}
\leq u_{\gamma}^{\H(S)}(\nu)  \\
& \leq \frac{p}{n} u_{\gamma}^{\H(T)} (\psi(\nu))
 = \frac{p}{n} \int_{C(T)} u_{\gamma}^{\H(T)|_{C(T)}} \; d \P_{\psi(\nu)}.
\end{align*}
Thus the above inequalities are all equalities and the proof is complete.
\end{proof}

\section{``Blow-and-sew''} \label{BlowAndSew}

We now begin building towards the proof of Theorem \ref{mainThm}. The main idea of the proof for dimensions $1$ and $2$ is that we may ``blow-up'' periodic points (to intervals in dimension $1$ and to discs in dimension $2$) and ``sew in'' more complicated dynamical behavior, and in the process we increase the order of accumulation in a controlled way. By iterating this procedure in a transfinite induction scheme, we obtain a proof of Theorem \ref{mainThm} in dimensions $1$ and $2$. The maps constructed in dimension $2$ can then be used to build maps in higher dimensions. In this section we describe and analyze the operation of ``blowing up'' a sequence of periodic orbits and ``sewing in'' other maps. The basic idea of this construction appears in Appendix C of \cite{BFF}. In this section, we assume $d \in \{ 1, 2\}$.

\begin{notation} \label{RnNotation}
Let $\D$ be the closed unit disc in $\R^d$. For a subset $S$ of $\R^d$, let $\interior(S)$ denote the interior of $S$, and let $\partial S$ be the boundary of $S$. For $r>0$ and $p$ in $\R^d$, we let $B(p, r)$ be the open ball of radius $r$ centered at $p$. Given $s > 0$ and a point $p$ in $\R^d$, let $A_{s,p}$ be the affine map of $\R^d$ given by $A_{s,p}(x) = sx + p$.
\end{notation}

We consider maps in the following class.
\begin{defn}
Define $\Cl_d$ to be the class of functions $f : \D \rightarrow \D$ with the following properties:
\begin{enumerate}
 \item $f$ is a continuous surjection, and if $d = 2$, then $f$ is a homeomorphism;
 \item $f|_{\partial \D} = \Id$;
 \item $\htop(f) < \infty$.
\end{enumerate}
\end{defn}

\begin{defn} \label{readyForOpDef}
Let $f : \D \rightarrow \D$ be continuous. Let $\{\theta_m\}_m$ be a sequence of periodic orbits for $f$, and let $S = \cup_m \theta_m$. We say that $f$ is \textbf{ready for operation on} $S$ if the following conditions are satisfied, where $Q = \cup_{k \geq 0} f^{-k}(S)$:
\begin{enumerate}
 \item for any $\nu$ in $\cap_{n = 1}^{\infty} \overline{\cup_{m \geq n} \{\mu_{\theta_m}\}}$, it holds that $\nu(\cup_m \theta_m) = 0$;
 \item the set $Q$ is countable and $Q \subset \interior(\D)$;
 \item for each point $x$ in $Q$, the derivative $Df_x$ is invertible, and if $d = 2$, then $\det Df_x > 0$ for each point $x$ in $Q$.
\end{enumerate}
\end{defn}
We remark that if $d = 2$, then in the above notation we have $Q = S$. To get non-zero orders of accumulation of entropy in dimension $1$, we must look outside the class of homeomorphisms because a homeomorphism of the circle or the unit interval has zero entropy, and therefore its order of accumulation of entropy is zero.

\subsection{The ``blow-and-sew" construction}

The following proposition carries out the ``blow and sew'' procedure. In the notation of the proposition, we think of the periodic orbits $\theta_m$ as being ``blown-up'' into disjoint discs, with towers over the maps $\chi_m$ being ``sewn in'' on the interiors of these discs.

\begin{prop} \label{BlowAndSewProp}
Suppose:
\begin{itemize}
 \item $f$ is a function in $\Cl_d$;
 \item $\{\theta_m\}_m$ is a sequence of periodic orbits for $f$, and $f$ is ready for operation on $\cup_m \theta_m$;
 \item $\{\chi_m\}_{m \in \N}$ is a sequence of functions in $\Cl_d$;
 \item for each natural number $m$, the sequence $\{\theta_{\ell}^m\}_{\ell}$ is a sequence of periodic orbits for $\chi_m$, and $\chi_m$ is ready for operation on $\cup_{\ell} \, \theta_{\ell}^m$;
 \item $\{\xi_m\}_{m \in \N}$ is a sequence of natural numbers satisfying $1 \leq \xi_m \leq |\theta_m|$ for each $m$ in $\N$.
 \item $\sup_m \; \frac{\xi_m}{|\theta_m|} \htop(\chi_m) < \infty$
\end{itemize}
Let $Q = \cup_{m,k} f^{-k}(\theta_m)$, and let $\{q_k\}_{k \in \N}$ be an enumeration of $Q$. Then there exist functions $F : \D \to \D$ and $\pi : \D \rightarrow \D$, a sequence $\{K_i\}_{i=0}^{\infty}$ of pairwise disjoint, compact subsets of $\D$, and a sequence $\{\phi_m\}_{m \in \N}$ of $C^{\infty}$ diffeomorphisms, $\phi_m : \D \times \{0, \dots, |\theta_m|-1\} \rightarrow K_m$, such that the following hold, where $L_k = \pi^{-1}(\{q_k\})$ for each $k$:
\begin{enumerate}
 \item $F$ is in $\Cl_d$;
 \item $\pi$ is a factor map from $(\D,F)$ to $(\D,f)$;
 \item $\pi(K_m)=\theta_m$, $L_k$ is $C^{\infty}$ diffeomorphic to $\D$ for each $k$, $\pi|_{\D \setminus (\cup_k L_k)}$ is injective, and $K_m \subset \cup_{q_k \in \theta_m} \interior(L_k)$ for each $m$ in $\N$.
 \item $K_i$ is $F$-invariant for each $i$, $K_0 = \D \setminus (\cup_{k=1}^{\infty} \interior(L_k))$, and $\cup_k \partial L_k$ is $F$-invariant.
 \item $\NW(F) \subseteq  \bigcup_{i=0}^{\infty} K_i$;
 \item $F|_{K_0}$ is a principal extension of $f$ with factor map $\pi|_{K_0}$, and for $\nu$ in $M(K_0 \setminus \cup_k \partial L_k,F)$, the map $\pi$ is a measure theoretic isomorphism between the measure preserving systems $(F,\nu)$ and $(f,\pi(\nu))$.
 \item $\phi_m$ is a topological conjugacy between $F|_{K_m}$ and a $(|\theta_m|,\xi_m)$ tower over $\chi_m$, for each $m$ in $\N$.
 \item $\cap_{n=1}^{\infty} \overline{ \cup_{m \geq n} M(K_m,F) } = (\pi)^{-1}(\cap_{n=1}^{\infty} \overline{ \cup_{m \geq n} \{\mu_{\theta_m}\} } ) \subset M(K_0 \setminus \cup_k \partial L_k,F)$, and $\pi$ maps $\cap_{n=1}^{\infty} \overline{ \cup_{m \geq n} M(K_m,F) }$ homeomorphically onto $\cap_{n=1}^{\infty} \overline{ \cup_{m \geq n} \{\mu_{\theta_m}\} }$;
 \item $F$ is ready for operation on $\cup_{m, \ell} \; \phi_m(\theta_{\ell}^m \times \{0, \dots, |\theta_m|-1\})$.
\end{enumerate}
\end{prop}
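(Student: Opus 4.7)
The plan is an explicit four-stage geometric construction of $(\D, F, \pi, \{K_i\}, \{\phi_m\})$, followed by verification of (1)--(9). \emph{Stage 1} blows up each $q_k \in Q$ by removing it from a small ball $B(q_k, r_k)$ and attaching a boundary circle $\partial L_k$ via polar coordinates; the radii $r_k$ are chosen summable and small enough that the balls $\overline{B(q_k, r_k)}$ are pairwise disjoint in $\interior(\D)$ and $f$ carries $\overline{B(q_k, r_k)}$ into $B(f(q_k), r_{k'}/2)$, which is possible because $f$ is ready for operation on $\bigcup_m \theta_m$ (so $Df$ is invertible on $Q$ and $Q \subset \interior(\D)$). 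The result is a disc with collapse map $\pi$ that is the identity off $\bigcup_k \interior(L_k)$ and sends each $L_k$ to $q_k$. \emph{Stage 2} defines $F$ on $K_0 = \D \setminus \bigcup_k \interior(L_k)$ in two pieces: on the open subset $\D \setminus \bigcup_k L_k$ by $\pi^{-1} \circ f \circ \pi$ (well-defined since $\pi$ is injective there), and on each circle $\partial L_k$ by the normalized projectivization of $Df_{q_k}$ as a map $\partial L_k \to \partial L_{f(q_k)}$; the two pieces match continuously by differentiability of $f$ at $q_k$, and invertibility of $Df_{q_k}$ (with positive determinant when $d=2$) makes the result continuous (respectively, a homeomorphism onto its image). \emph{Stage 3} places inside each $L_{k^m_j}$ (where $\theta_m = \{q_{k^m_0}, \ldots, q_{k^m_{|\theta_m|-1}}\}$ in cyclic order) a small closed disc $D_{m,j}$ with $\diam(D_{m,j}) \to 0$ as $m \to \infty$, sets $K_m = \bigcup_j D_{m,j}$, fixes a diffeomorphism $\phi_m : \D \times \{0,\ldots,|\theta_m|-1\} \to K_m$, and defines $F|_{K_m}$ as the $\phi_m$-conjugate of the $(|\theta_m|, \xi_m)$-tower over $\chi_m$. \emph{Stage 4} fills the open annular region $A_{m,j} = L_{k^m_j} \setminus (\interior(D_{m,j}) \cup \partial L_{k^m_j})$ by a skew-product $(r,\theta) \mapsto (\rho(r), g_r(\theta))$ in radial coordinates, where $\rho$ is a strict radial contraction toward the inner boundary and $g_r$ is a continuous family of circle maps interpolating between the outer map prescribed by Stage 2 (on $\partial L_{k^m_j}$) and the inner map prescribed by Stage 3 (on $\partial D_{m,j}$).

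Conclusions (1)--(4) and (7) are built into the construction, and the commutation $\pi \circ F = f \circ \pi$ holds by design. Finite topological entropy in (1) follows from the tower formula $\htop(F|_{K_m}) = (\xi_m/|\theta_m|)\htop(\chi_m)$, the identity $\htop(F|_{K_0}) = \htop(f)$ from the principal-extension property (6), and the uniform bound $\sup_m (\xi_m/|\theta_m|)\htop(\chi_m) < \infty$. For (6), any $\mu \in M(K_0, F)$ decomposes into a part on $\bigcup_k \partial L_k$ (where $F$ acts as a union of projectivized linear circle homeomorphisms, hence with zero entropy) and a part on $K_0 \setminus \bigcup_k \partial L_k$ (where $\pi$ is a measurable isomorphism onto $\D \setminus Q$); the $f$-image $\pi\mu$ splits analogously, with its atomic part on $Q$ also of zero entropy. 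Claim (5) follows from the strict radial contraction built into Stage 4, which makes every interior point of each $A_{m,j}$ wandering. Claim (8) follows because $\pi$ sends each $M(K_m, F)$ to the single measure $\mu_{\theta_m}$ by $F$-invariance, while the vanishing of $\diam(K_m)$ forces any weak-$*$ accumulation of measures in $\bigcup_m M(K_m, F)$ to concentrate on a subset of $K_0 \setminus \bigcup_k \partial L_k$, where $\pi$ is bijective and continuous. Claim (9) is bookkeeping: the $F$-periodic orbits $\phi_m(\theta^m_\ell \times \{0,\ldots,|\theta_m|-1\})$ inherit the required derivative, countability, and zero-measure properties from the readiness of each $\chi_m$ together with the tower structure of $\phi_m$.

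The main obstacle is Stage 4. The interpolation must simultaneously be continuous (and a homeomorphism when $d=2$), match the globally prescribed circle maps on both boundaries of every annulus $A_{m,j}$, be a strict radial contraction toward the inner boundary (so (5) holds), and contribute no entropy visible to $F|_{K_0}$ (needed for (6)). Once a coherent family of such interpolations is specified across all $k$ and $m$, the remaining verifications are largely bookkeeping on the tower structure and on the derivatives inherited from $f$, $\chi_m$, and their compositions.
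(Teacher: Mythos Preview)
Your overall architecture matches the paper's: build a collapse map $\pi$ that replaces each $q_k$ by a small disc $L_k$, define $F$ on the complement as a lift of $f$ through $\pi$, glue in towers over $\chi_m$ inside the discs sitting over periodic orbits, and interpolate on annuli with radially wandering dynamics. Stages 2--4 and your verification sketch for (1)--(9) are essentially what the paper does.

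The gap is in Stage~1. You assert that one may choose radii $r_k$ so that the closed balls $\overline{B(q_k,r_k)}$ are pairwise disjoint in $\interior(\D)$, but $Q$ need not be discrete: in the intended base cases (tent maps, horseshoes) the periodic points are dense in an invariant Cantor set, so some $q_k$ are accumulation points of $Q$ itself, and no positive-radius ball about such a $q_k$ can avoid the others. For the same reason, the auxiliary requirement that $f$ carry $\overline{B(q_k,r_k)}$ into $B(f(q_k), r_{k'}/2)$ cannot in general be arranged by shrinking radii. The facts you invoke from ``ready for operation'' (invertibility of $Df$ on $Q$, $Q \subset \interior(\D)$) are irrelevant to this disjointness issue.

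The paper handles this with a separate lemma: $\pi:\D\to\D$ is built as a uniform limit of maps $g_n$, each obtained from $g_{n-1}$ by composing with a single ``squeeze'' that collapses one small ball to a point. The centers $p_k$ of the fibers $L_k = \pi^{-1}(q_k)$ are \emph{not} the original $q_k$; they shift as the construction proceeds, and the radii $\epsilon_k$ are chosen inductively at each step to avoid all previously created fibers and to keep the sequences uniformly Cauchy. Once this $\pi$ exists, the paper defines $F$ on $\D\setminus L$ by $\pi^{-1}\circ f\circ\pi$ (your Stage~2 formula), obtains the boundary circle map on each $\partial L_k$ by continuous extension from outside rather than by projectivizing $Df_{q_k}$, and fills the annuli with explicit radial-skew maps exactly as in your Stage~4. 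So your plan is salvageable: replace Stage~1 by this limiting blow-up construction, and the rest goes through. The one substantive piece of content you are missing is precisely this blow-up lemma for a countable, possibly non-discrete $Q$.
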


\begin{proof}
Let $f$, $\{\theta_m\}_{m \in \N}$, $\{\chi_m\}_{m \in \N}$, $\{\theta_{\ell}^m\}_{m,\ell \in \N}$, and $\{ \xi_m\}_{m \in \N}$ be given as in the hypotheses. Let $Q = \cup_{m, k \geq 0} f^{-k}(\theta_m)$, and let $Q = \{q_k\}_{k \in \N}$ be a enumeration of $Q$. The following lemma blows up each of the points in $Q$ into a disc.

\begin{lem} \label{BlowUpLemma}
Let $Q$ be a countable set contained in the interior of $\D$. Then there exists a summable sequence $\{\epsilon_k\}_{k \in \N}$ of positive real numbers, a sequence $\{p_k\}_{k \in \N}$ such that $\overline{B(p_k,\epsilon_k)}$ is contained in $\interior(\D)$ for each $k$ in $\N$, and a function $\pi : \D \rightarrow \D$ such that
\begin{enumerate}
  \item $\pi$ is continuous and surjective;
  \item $\pi^{-1}(\{q_k\}) = \overline{B(p_k, \epsilon_k)}$ for each $k$;
  \item $\pi|_{\D \setminus \cup_k \overline{B(p_k, \epsilon_k)}}$ is a homeomorphism onto its image, $\D \setminus Q$.
\end{enumerate}
\end{lem}
\begin{proof}
Let $Q = \{q_k\}$ be as in the hypotheses. Consider $\R^d \setminus \{0\}$ in polar coordinates: $(r,\theta) \in (0,\infty) \times S^{d-1}$. For $n$ in $\N$ and $\epsilon >0$, consider the function $R_{\epsilon,n} : \R^d \to \R^d$ given by $R_{\epsilon,n}(0) = 0$ and for $(r,\theta)$ in $\R^d \setminus \{0\}$,
\begin{equation*}
R_{\epsilon,n}(r,\theta) = \left\{ \begin{array}{ll}
                                      0, & \text{ if } r \leq \frac{\epsilon}{n} \\
                                      (\frac{n}{n-1}(r-\frac{\epsilon}{n}), \theta), & \text{ if } \frac{\epsilon}{n} \leq r \leq \epsilon \\
                                      (r,\theta), & \text{ otherwise.}
                                 \end{array}
                         \right.
\end{equation*}
Let $S_{\epsilon,n} : \R^d \to \R^d$ be given by $S_{\epsilon,n}(0)=0$ and $S_{\epsilon,n}(x) = R^{-1}_{\epsilon,n}(x)$ for $x \neq 0$. Now for $p$ in $\R^d$, let $R_{\epsilon,n,p} : \R^d \to \R^d$ be defined by $R_{\epsilon,n,p}(x) = R_{\epsilon,n}(x-p) +p$. Also define $S_{\epsilon,n,p} : \R^d \to \R^d$ to be $S_{\epsilon,n,p}(x) = S_{\epsilon,n}(x-p)+p$. Note that $R_{\epsilon,n,p}$ is continuous on $\R^d$ and $S_{\epsilon,n,p}$ is continuous on $\R^d \setminus \{p\}$. Also, $S_{\epsilon,n,p}|_{\R^d \setminus \{p\}}$ is a homeomorphism onto its image, with inverse given by $R_{\epsilon,n,p}|_{\R^d \setminus \overline{B(0,\frac{1}{n}\epsilon)}}$. Moreover, we have
\renewcommand{\labelenumi}{(\roman{enumi})}
\begin{enumerate}
 \item $d(R_{\epsilon,n,p}(x), R_{\epsilon,n,p}(y)) \leq \frac{n}{n-1} d(x,y)$;
 \item $d(x,R_{\epsilon,n,p}(x)) \leq \epsilon$;
 \item $d(x,S_{\epsilon,n,p}(x)) \leq \epsilon$.
\end{enumerate}

Choose a sequence $\{n_k\}_k$ of natural numbers such that $\prod_{k=1}^{\infty} \frac{n_k-1}{n_k} > 0$. Let $C = \prod_{k=1}^{\infty} \frac{n_k}{n_k-1} < \infty$. We make the following recursive definitions. Let $\delta_1 >0$ be such that $\dist(q_1,\partial \D) > \delta_1$. Let $f_1 = S_{\delta_1,n_1,q_1}$ and $g_1 = R_{\delta_1,n_1,q_1}$. If $\delta_k$, $f_k$ and $g_k$ are defined, choose $\delta_{k+1} >0$ such that $\delta_{k+1} < \dist(f_k(q_{k+1}), \partial \D \cup g_k^{-1}(\{q_1,\dots,q_k\})) $ and let $f_{k+1} = S_{\delta_{k+1},n_{k+1},f_k(q_{k+1})} \circ f_k$ and $g_{k+1} = g_k \circ R_{\delta_{k+1},n_{k+1},f_k(q_{k+1})}$. We also require that $\{\delta_k\}_k$ is summable.

The properties (i)-(iii) above imply that for any $k_1 \leq k_2$
\renewcommand{\labelenumi}{(\alph{enumi})}
\begin{enumerate}
 \item $d(g_{k_1}(x),g_{k_1}(y)) \leq (\prod_{k=1}^{k_1} \frac{n_k}{n_k-1} ) d(x,y)$;
 \item $d(g_{k_1}(x),g_{k_2}(x)) \leq \sum_{k=k_1}^{k_2} \delta_k$;
 \item $d(f_{k_1}(x),f_{k_2}(x)) \leq \sum_{k=k_1}^{k_2} \delta_k$.
\end{enumerate}
For each $k$, $f_k$ is continuous on $\R^d \setminus \{q_1, \dots, q_k\}$ and $g_k$ is continuous on $\R^d$. In fact, $f_k$ is a homeomorphism from $\R^d \setminus \{q_1, \dots, q_k\}$ to its image, and $g_k$ is its inverse. Note that the sequences $\{f_k\}_k$ and $\{g_k\}_k$ are uniformly Cauchy by properties (b) and (c) above. Therefore the pointwise limits $f(x) = \lim_k f_k(x)$ and $g(x) = \lim_k g_k(x)$ exist for all $x$ in $\R^d$. Since $f_k$ is continuous on $\R^d \setminus Q$ for all $k$, and since $\{f_k\}_k$ is uniformly Cauchy, $f$ is continuous on $\R^d \setminus Q$. The fact that $g_k$ is continuous on $\R^d$ for each $k$ and the sequence $\{g_k\}_k$ is uniformly Cauchy implies that $g$ is continuous. Using the fact that $\delta_{k+1} < \dist\bigl(f_k(q_{k+1}), \partial \D \cup g_k^{-1}(\{q_1,\dots,q_k\})\bigr)$ for each $k$, we observe that $f|_{\partial \D} = g|_{\partial \D} = \Id$ and if $x$ is in $g_m^{-1}(\{q_k\})$ where $k \leq m$, then $g_n(x) = g_m(x)$ for all $n \geq m$ and therefore $g(x) = g_m(x)$. This last observation means that if $g_m(x)$ is in $Q$ for any $m$, then $g(x)$ is in $Q$. We now consider $f$ and $g$ restricted to $\D$, and note that $f$ and $g$ act by the identity map on $\partial \D$. Also, each $g_k$ defines a continuous surjection and therefore $g$ does as well.

Let us check that for $x$ in $\D \setminus Q$, $g(f(x)) = x$. Note that $d(g_k(f_k(x)),g(f_k(x))) \leq \sum_{j=k}^{\infty} \delta_j$. Letting $k$ tend to infinity and using the continuity of $g$ gives that $g(f(x))=x$.

Finally we check that for $x$ in $g^{-1}(\D \setminus Q)$, $f(g(x))=x$. Let $x$ be in $g^{-1}(\D \setminus Q)$. Let $\epsilon >0$. Choose $K$ so large that $\sum_{k \geq K} \delta_k < \epsilon/3$. Since $g(x)$ is not in $Q$, $f_K$ is continuous at $g(x)$. Since $f_K$ is continuous at $g(x)$, there exists $\delta >0$ such that $d(y,g(x)) < \delta$ implies $d(f_K(y),f_K(g(x))) < \epsilon/3$. Then choose $M \geq K$ such that $d(g_M(x),g(x))<\delta$. Then
\begin{align*}
d(x,f(g(x)) &  \leq d(f_M(g_M(x)), f_K(g_M(x))) + \\ & \hspace{10 mm} + d(f_K(g_M(x)),f_K(g(x))) + d( f_K(g(x)), f(g(x))) \\ & \leq  \epsilon/3 + \epsilon/3 + \epsilon/3 = \epsilon.
\end{align*}
Since $\epsilon >0$ was arbitrary, we have that $x=f(g(x))$.

Now let $\epsilon_k = \frac{\delta_k}{n_k}$, $p_k = f(q_k)$, and $\pi = g$. Note that the conclusions of the lemma are satisfied by these choices.
\end{proof}

\subsubsection{Setup}
Now we proceed with the proof of Proposition \ref{BlowAndSewProp}. Choose $\{\epsilon_k\}_{k \in \N}$, $\{p_k\}_{k \in \N}$, and $\pi$ satisfying the assumptions and conclusions of Lemma \ref{BlowUpLemma}. These objects will remain fixed throughout the rest of the proof.

For the sake of notation, let $L_k = \overline{B(p_k, \epsilon_k)} $ and $L = \cup_k L_k$. Note that $\interior(L) = \cup_k B(p_k,\epsilon_k)$. Also, for each $k$ in $\N$, we define the natural number $j(k)$ as the unique solution to the equation $f(q_k) = q_{j(k)}$.

\subsubsection{Construction of $F$}
We now construct $F : \D \to \D$. For $x$ in $\D \setminus L$, let
\begin{equation} \label{DefOnDMinusL}
F(x) = \pi^{-1} \circ f \circ \pi(x).
\end{equation}
Since $L = \pi^{-1}(Q)$ and $Q$ is completely invariant for $f$, we have that if $x$ is in $\D \setminus L$ then $F(x)$ is in $\D \setminus L$. Note that $F$ is continuous on $\D \setminus L$ as it is a composition of continuous functions (recall that $\pi^{-1}|_{\D \setminus Q}$ is continuous by Lemma \ref{BlowUpLemma} (3)). We now show that the function $F$ can be extended to a continuous map on $\D \setminus \interior(L)$ such that $F(\partial B(p_k, \epsilon_k)) = \partial B(p_{j(k)}, \epsilon_{j(k)})$.

Suppose $d=1$ (the case $d = 2$ is treated below). Then $\partial B(p_k, \epsilon_k)$ is just the two endpoints of an interval. Because $Df_{q_k}$ is invertible, $f$ is either orientation preserving or orientation reversing at $q_k$. In either case, we extend $F$ continuously at $\partial B(p_k,\epsilon_k)$ so that $F$ maps $\partial B(p_k, \epsilon_k)$ bijectively to $\partial B(p_{j(k)}, \epsilon_{j(k)})$. Now we extend $F$ to the one-dimensional annulus $\{x : \frac{1}{2} \epsilon_k \leq |x-p_k| \leq \epsilon_k\}$ as follows. Let $T^{+} : [-1,-\frac{1}{2}] \cup [\frac{1}{2}, 1] \rightarrow [-1,-\frac{1}{2}] \cup [\frac{1}{2}, 1]$ be given by $T^{+}(x) = x + \frac{1}{10} \sin( 2 \pi |x|)$. Also, let $T^{-} = - x + \frac{1}{10} \sin( 2 \pi |x|)$. If $Df_{q_k} > 0$, let $\sigma = +$, and if $Df_{q_k} < 0$, let $\sigma=-$. Then for $x$ such that $\frac{1}{2} \epsilon_k \leq |x-p_k| \leq \epsilon_k$, let
\begin{equation} \label{DefOnAnnuluskdOne}
F(x) = \bigl( A_{\epsilon_{j(k)}, p_{j(k)}} \circ T^{\sigma} \circ A_{\epsilon_k, p_k}^{-1} \bigr)(x),
\end{equation}
where $A_{s,x}$ is defined in Notation \ref{RnNotation}. We remark that the additional terms involving sine in the functions $T^+$ and $T^-$ are introduced for technical convenience in proving Claim \ref{claimFive}.

Now suppose $d = 2$. We have that $\det Df|_Q > 0$, which implies that for each $k$, we may extend $F$ continuously on $\partial B(p_k,\epsilon_k)$  in the following way. There is an orientation preserving homeomorphism $T_k$ of the unit circle such that for $x$ in $\partial B(p_k, \epsilon_k)$, we let $F(x) = (A_{\epsilon_{j(k)},p_{j(k)}} \circ T_k \circ A_{\epsilon_k,p_k}^{-1})(x)$. Recall that any orientation preserving homeomorphism of the unit circle to itself is homotopic to the identity. Let $H_k: [\frac{1}{2},1] \times S^1$ be a homotopy such that $H(\frac{1}{2}, \cdot) = \Id$, $H_k(1, \cdot) = T_k$, and $H(t,\cdot)$ is a homeomorphism for each $t$ in $[\frac{1}{2},1]$. Now we extend $F$ to the annulus $\{x : \frac{1}{2} \epsilon_k \leq |x-p_k| \leq \epsilon_k\}$ as follows. We consider the annulus centered at $0$ with inner radius $\frac{1}{2}$ and outer radius $1$ in polar coordinates: $\{ (r,\theta) : r \in [\frac{1}{2},1], \theta \in S^1\} \subset \R^2$. For $(r,\theta)$ in this annulus, define $U_k(r, \theta) = \bigl(r + \frac{1}{10}\sin(2 \pi r), \, H_k(r,\theta)\bigr)$. Now for $x$ in $\D$ with $\frac{1}{2} \epsilon_k \leq |x-p_k| \leq \epsilon_k$, let
\begin{equation} \label{DefOnAnnuluskdTwo}
F(x) = \bigl( A_{\epsilon_{j(k)}, p_{j(k)}} \circ U_k \circ A_{\epsilon_k, p_k}^{-1} \bigr)(x).
\end{equation}

Up to this point in the construction, we have defined $F$ on $\D \setminus \cup_k B(p_k, \frac{1}{2}\epsilon_k)$. Now let $m$ be in $\N$ and suppose $\theta_m = \{q_{k_0}, \dots, q_{k_{|\theta_m|-1}}\}$. Let $g_{k_{|\theta_m|-1}} : \D \rightarrow \D$ be $\chi_m^{\xi_m}$, and let $g_{k_i}$ be the identity map on $\D$ for all $i \in \{0, \dots, |\theta_m|-2\}$. Making these choices for all $m$, we define $g_k$ for all $k$ such that $q_k$ is in $\cup_m \theta_m$. For all $k$ such that $q_k$ is not in $\cup_m \theta_m$, let $g_k$ be the identity map on $\D$. Now for each $k$ in $\N$ and $x$ in $B(p_k, \frac{1}{2}\epsilon_k)$, let
\begin{equation} \label{DefOnBkHalf}
F(x) = A_{\frac{1}{2}\epsilon_{j(k)},p_{j(k)}} \circ g_k \circ A_{\frac{1}{2}\epsilon_k,p_k}^{-1}(x).
\end{equation}
This concludes the construction of $F$.

\subsubsection{Properties of $F$} In this section we prove that $F$ has properties (1)-(9) in Proposition \ref{BlowAndSewProp}. For the sake of notation, we make some definitions. Let $K_0 = \D \setminus \interior(L)$, as in the statement of the proposition. For each $m$ in $\N$, let $K_m = \cup_{q_k \in \theta_m} \overline{B(p_k,\frac{1}{2}\epsilon_k)}$. The following claim follows directly from the construction of $F$.

\begin{claim}[Part of property (1)] \label{claimOne}
$F$ is a continuous surjection, and if $d = 2$, then $F$ is a homeomorphism. Also, $F|_{\partial \D} = \Id$.
\end{claim}

\begin{claim}[Property (2)] \label{claimTwo}
$\pi$ is a factor map from $(\D,F)$ to $(\D,f)$.
\end{claim}
\begin{proof}
By Lemma \ref{BlowUpLemma}, the map $\pi$ is continuous and surjective. For $x$ in $\D \setminus L$, we have that $\pi(F(x)) = f( \pi(x))$ by definition (Equation (\ref{DefOnDMinusL})). For $x$ in $\overline{B(p_k,\epsilon_k)}$, we have that $F(x)$ is in $\overline{B(p_{j(k)},\epsilon_{j(k)})}$ by definition, and then $\pi(F(x)) = q_{j(k)} = f(q_k) = f(\pi(x))$, using property (3) in Lemma \ref{BlowUpLemma}.
\end{proof}

\begin{claim}[Property (3)] \label{claimThree}
We have that $\pi(K_m)=\theta_m$ for each $m$ in $\N$, $L_k$ is $C^{\infty}$ diffeomorphic to $\D$, $\pi|_{\D \setminus \interior(L)}$ is injective, and $K_m \subset \interior(\cup_{q_k \in \theta_m} L_k)$.
\end{claim}
\begin{proof}
Let $\theta_m = \{q_{k_1}, \dots, q_{k_{\theta_m}}\}$. Then by property (3) in Lemma \ref{BlowUpLemma}, we have $\pi(K_m) = \pi(\cup_{q_k \in \theta_m} \overline{B(p_k,\frac{1}{2}\epsilon_k)}) = \theta_m$. The second assertion follows immediately from the fact that $L_k = \overline{B(p_k,\epsilon_k)}$, and the third assertion holds since $K_m = \cup_{q_k \in \theta_m} \overline{B(p_k,\frac{1}{2}\epsilon_k)} \subset \cup_{q_k \in \theta_m} L_k$.
\end{proof}

The following claim follows directly from the construction.
\begin{claim}[Property (4)] \label{claimFour}
$K_i$ is $F$-invariant for each $i$ in $\Z_{\geq 0}$, $K_0 = \D \setminus \interior(L)$, and $\cup_k L_k$ is $F$-invariant.
\end{claim}

\begin{claim}[Property (5)] \label{claimFive}
$\NW(F) \subseteq \bigcup_{i=0}^{\infty} K_i$.
\end{claim}
\begin{proof}
If $x$ is in $B(p_k,\epsilon_k)$ for some $k$ such that $q_k$ is not periodic, then $x$ is wandering because $q_k$ is pre-periodic. Now consider the periodic orbit $\theta_m$. Recall that any point in $(\frac{1}{2},1)$ is wandering for the map $T(t) = t + \frac{1}{10} \sin(2 \pi t - \pi)$. According to Equations (\ref{DefOnAnnuluskdOne}) and (\ref{DefOnAnnuluskdTwo}), the radial component of $F$ restricted to $\cup_{q_k \in \theta_m} B(p_k, \epsilon_k) \setminus \overline{B(p_k,\frac{1}{2}\epsilon_k)}$ is conjugate to a tower over $T$. It follows that any $x$ in $\cup_{q_k \in \theta_m} B(p_k, \epsilon_k) \setminus \overline{B(p_k,\frac{1}{2}\epsilon_k)}$ is wandering, which means that $\NW(F) \subset (K_0) \cup \Bigl( \bigcup_m K_m \Bigr)$.
\end{proof}

\begin{claim}[Property (6)] \label{claimSix}
$F|_{K_0}$ is a principal extension of $f$ with factor map $\pi|_{K_0}$, and for $\nu$ in $M(K_0 \setminus \cup_k \partial L_k, F)$, it holds that $\pi$ is a measure theoretic isomorphism between $(F,\nu)$ and $(f,\pi(\nu))$.
\end{claim}
\begin{proof} Let $\nu$ be in $M(K_0 \setminus \cup_k \partial L_k, F)$. By conclusion (3) of Lemma \ref{BlowUpLemma}, the factor map $\pi$ is injective on $K_0 \setminus \cup_k L_k$ and therefore defines a measure theoretic isomorphism between $(F,\nu)$ and $(f,\pi(\nu))$. An ergodic measure $\nu$ for $F|_{K_0}$ that is not in $M(K_0 \setminus \cup_k \partial L_k, F)$ has $\nu(\cup_k \partial L_k) = 1$, and therefore $h^F(\nu) = 0$. It follows that for every $\nu$ in $M(K_0,F)$, we have $h^F(\nu) = h^f(\pi(\nu))$.

\end{proof}

Let $\theta_m = \{q_{k_0}, \dots , q_{k_{|\theta_m|-1}}\}$ be a periodic orbit for $f$ labeled such that $f(q_{k_i}) = q_{k_{i+1}}$, where $i+1$ is taken modulo $|\theta_m|$. Let $\phi_m : \D \times \{0, \dots, |\theta_m|-1\} \to \cup_{i=0}^{|\theta_m|-1} \overline{ B(p_{k_i},\frac{1}{2}\epsilon_{k_i}) }$ be the map given by $\phi_m(x,i) = A_{\frac{1}{2}\epsilon_{k_i},p_{k_i}}(x)$.

\begin{claim}[Property (7)] \label{claimSeven}
 $F|_{K_m}$ is topologically conjugate by the map $\phi_m$ to a $(|\theta_m|,\xi_m)$ tower over $\chi_m$, for each $m$ in $\N$.
\end{claim}
\begin{proof}
By Equation (\ref{DefOnBkHalf}), for any $m$ and any $k$ such that $q_k$ is in $\theta_m$, $F|_{\overline{B(p_k,\frac{1}{2}\epsilon_k)}} = A_{\frac{1}{2}\epsilon_{j(k)},p_{j(k)}} \circ g_k \circ A_{\frac{1}{2}\epsilon_k,p_k}^{-1}$. Then by the choice of $g_k$, we have that $F$ is topologically conjugate to a $(|\theta_m|,\xi_m)$ tower over $\chi_m$, with the conjugacy given by the map $\phi_m$.
\end{proof}

\begin{claim}[Property (8)] \label{claimEight}
 Let $C_0 = \cap_{n=1}^{\infty} \overline{ \cup_{m \geq n} M(K_m,F) }$ and also let $C(f) = \cap_{n=1}^{\infty} \overline{ \cup_{m \geq n} \{\mu_{\theta_m}\} }$. Then $C_0 = \pi^{-1}(C(f) ) \subset M(K_0 \setminus \cup_k \partial L_k,F)$, and $\pi$ maps $C_0$ homeomorphically onto $C(f)$.
\end{claim}
\begin{proof}
Let $\{\mu_{m_{\ell}}\}_{\ell}$ be a sequence of measures in $M(\D,F)$ tending to $\mu$ such that $\mu_{m_{\ell}} \in M(K_{m_{\ell}},F)$ for each $\ell$. Then the sequence $\{\pi(\mu_{m_{\ell}}) = \mu_{\theta_{m_{\ell}}}\}_{\ell}$ converges to $\pi(\mu)$ by the continuity of $\pi$, which shows that $C_0 \subset \pi^{-1}(C(f))$.

Now let $\mu$ be in $C(f)$, and let $\nu$ be in $\pi^{-1}(\mu)$. By property (1) in the definition of the statement that $f$ is ready for operation on $\cup_m \theta_m$ (Definition \ref{readyForOpDef}), $\mu(\cup_m \theta_m)=0$, and thus $\nu(L) = 0$. Therefore $\nu \in M(K_0 \setminus \cup_k \partial L_k,F)$, and we have shown that $\pi^{-1}(C(f)) \subset M(K_0 \setminus \cup_k \partial L_k,F)$. Since $\pi|_{\D \setminus \cup_k L_k}$ is a homeomorphism onto its image $\D \setminus Q$, we also have that for any $\mu$ in $C(f)$, the set $\pi^{-1}(\mu)$ consists of exactly one measure.

Now let $\mu_{\theta_{m_k}}$ converge to $\mu$ in $M(\D,f)$. By the previous statement, there exists a measure $\nu$ such that $\{ \nu \} = \pi^{-1}(\mu)$. Now choose any sequence of measures $\{\nu_{m_k}\}_k$ such that $\nu_{m_k}$ is in $\pi^{-1}(\mu_{\theta_{m_k}})$ for each $k$. By the sequential compactness of $M(\D,F)$, any subsequence $\{\tau_n\}_n$ of $\{\nu_{m_k}\}_k$ has a subsequence $\{\tau_{n_{\ell}}\}_{\ell}$ that converges to some measure $\tau$. By continuity of $\pi$, we have $\pi(\tau) = \mu$. Since $\pi^{-1}(\mu) = \{\nu\}$, we see that $\tau = \nu$. Since this holds for any subsequence of $\{\nu_{m_k}\}_k$, it follows that $\{\nu_{m_k}\}_k$ converges to $\nu$. This argument shows that $C_0 \supset \pi^{-1}(C(f))$, and therefore $C_0 = \pi^{-1}(C(f))$ (since we showed the reverse inclusion at the beginning of this proof). Since $\pi$ is surjective, we also have that $\pi(C_0) = C(f)$.

Now we have that $\pi|_{C_0}$ is a continuous bijective map from a compact space into a Hausdorff space. It follows that $\pi$ maps $C_0$ homeomorphically onto its image $C(f)$, which completes the proof.
\end{proof}

\begin{claim}[Property (1)] \label{claimTen}
$F$ is in $\Cl_d$.
\end{claim}
\begin{proof}
Claims \ref{claimFive}-\ref{claimSeven} and the variational principle imply that
\begin{equation*}
\htop(F) = \max \Bigl( \htop(f), \; \sup_{m} \frac{\xi_m}{|\theta_m|} \htop( \chi_m) \Bigr).
\end{equation*}
The right-hand side of this equation is finite by hypothesis. Combining this fact with Claim \ref{claimOne}, we obtain that $F$ is in $\Cl_d$.
\end{proof}

\begin{claim}[Property (9)] \label{claimNine}
 $F$ is ready for operation on $\cup_{m, \ell} \; \cup_{q_k \in \theta_m} A_{\frac{1}{2} \epsilon_k, p_k} (\theta_{\ell}^m )$.
\end{claim}
\begin{proof}
First note that $F$ is in $\Cl_d$ by Claim \ref{claimTen}. Also, we have that $S = \cup_{m, \ell} \; \cup_{q_k \in \theta_m} A_{\frac{1}{2} \epsilon_k, p_k} (\theta_{\ell}^m )$ is a countable collection of periodic points for $F$ by Claim \ref{claimSeven}. Let $\Theta_i$ be an enumeration of the periodic points orbits in $S$, and let $C(F) = \cap_{n=1}^{\infty} \overline{\cup_{k \geq n} \{\mu_{\Theta_i}\} }$. Now we check that $F$ satisfies the properties (1)-(3) in Definition \ref{readyForOpDef}.

Let $\nu$ be in $C(F)$. Let $C_0 = \cap_{n=1}^{\infty} \overline{ \cup_{m\geq n} M(K_m,F) }$. Note that $C(F) \subset C_0 \cup \Bigl(\cup_{m \geq 1} M(K_m,F)\Bigr)$. By Claim \ref{claimEight}, we have that $C_0 \subset M(K_0 \setminus \cup_k \partial L_k,F)$. Thus if $\nu$ is in $C_0$, then $\nu(L) = 0$. Since $\cup_i \Theta_i \subset \cup_{m \geq 1} K_m \subset L$, it follows that $\nu(\cup_i \Theta_i)=0$, which proves property (1) in Definition \ref{readyForOpDef} in the case that $\nu$ is in $C_0$. Now suppose $\nu$ is in $M(K_m,F)$ for some $m \geq 1$. By Claim \ref{claimSeven}, we have that $F|_{K_m}$ is topologically conjugate to a tower over $\chi_m$ via the map $\phi_m$. Any sequence $\{\Theta_{i_k}\}$ such that $\{\mu_{\Theta_{i_k}}\}$ converges to $\nu$ must eventually lie in $K_m$, and therefore $\nu( \cup_i \Theta_i) = 0$ because $\chi_m$ is ready for operation on $\cup_{\ell} \theta^m_{\ell}$.

To check that $F$ satisfies property (2) in Definition \ref{readyForOpDef}, we note that $Q = \cup_{i,k} F^{-k}(\Theta_i)$ is countable and contained in $\interior(\D)$ because $f$ and $\chi_m$ satisfy these properties with their respective sequences of periodic points, $\{\theta_m\}_m$ and $\{\theta_{\ell}^m\}_{\ell}$.

To check Property (3) in Definition \ref{readyForOpDef}, we need to check that $DF|_x$ is continuous and invertible at each point $x$ of $Q$ and that $\det DF|_x >0$ if $d = 2$. For each point $x$ in $Q$ there is an open set $B(p_k,\epsilon_k)$ containing $x$ on which $F$ is either affine or conjugate by affine maps to a tower over $\chi_m$. Property (3) in Definition \ref{readyForOpDef} is satisfied at $x$ if $F$ is affine on $B(p_k,\epsilon_k)$. If $F$ is conjugate to a tower over $\chi_m$ on $B(p_k,\epsilon_k)$, then $F$ satisfies property (3) of Definition \ref{readyForOpDef} because $\chi_m$ satisfies this property, which extends to simple towers.
\end{proof}

\subsubsection{Conclusion of the proof of Proposition \ref{BlowAndSewProp}}
By Claims \ref{claimOne}-\ref{claimTen}, properties (1)-(9) are satisfied for $F$, $\pi$, $\{K_i\}_{i=0}^{\infty}$, and $\{\phi_m\}_{m \in \N}$. This completes the proof of Proposition \ref{BlowAndSewProp}.
\end{proof}

\subsection{Additional properties of the blown-up map}

\begin{defn}
Let $f$, $\{\theta_m\}_{m \in \N}$, $\{\chi_m\}_{m \in \N}$, $\{\theta_{\ell}^m\}_{m,\ell \in \N}$, and $\{\xi_m\}_{m \in \N}$ be as in the hypotheses of Proposition \ref{BlowAndSewProp}. We define $\Bl( f, \{\theta_m\}_m, \{\chi_m\}_m, \{\theta_{\ell}^m\}_{m,\ell}, \{\xi_m\}_m )$ to be the set of functions $F$ in $\Cl_d$ such that there exists $\pi$, $\{K_i\}_i$, and $\{\phi_m\}_m$ as in the statement of Proposition \ref{BlowAndSewProp}. In these terms, Proposition \ref{BlowAndSewProp} asserts that $\Bl( f, \{\theta_m\}_m, \{\chi_m\}_m, \{\theta_{\ell}^m\}_{m,\ell}, \{\xi_m\}_m )$ is non-empty.
\end{defn}


\begin{lem} \label{ESzeroLemma}
Let $F : D \rightarrow D$ be a continuous surjection of a compact metric space. Suppose that $\NW(F) \subseteq \sqcup_{i=0}^{\infty} K_i$, where each $K_i$ is compact, $F(K_i)=K_i$, and $K_i = \cup_{j=1}^{J_i} K_i^j$, where the sets $\{K_i^j\}_{j=1}^{J_i}$ are compact and pairwise disjoint. Also, suppose that $\lim_i \max_{1 \leq j \leq J_i} \diam(K_i^j) = 0$. Then there exists an entropy structure $(f_k)$ for $F$ with the following property: for each $k$, there exists $I$ such that if $i > I$ then $f_k|_{M(K_i,F)} \equiv 0$.
\end{lem}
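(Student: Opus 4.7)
The strategy is to build the entropy structure as a sequence of partition entropies $f_k(\mu)=h^F(\mu,P_k)$, where $P_k$ is a finite Borel partition of $D$ whose single ``remainder'' element $R_k$ absorbs the tail sets $K_i$ for $i>I_k$. This design guarantees that every $\mu\in M(K_j,F)$ with $j>I_k$ is supported in $K_j\subset R_k$, and hence sees only the trivial partition, yielding $f_k(\mu)=0$.

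Concretely, choose a non-decreasing sequence $I_k\to\infty$, and for each $i\geq 0$ fix a refining sequence $\{Q_k^{(i)}\}_k$ of finite Borel partitions of $K_i$ with $\mathrm{mesh}(Q_k^{(i)})\to 0$ and with boundaries of zero measure for every $F|_{K_i}$-invariant measure; such $Q_k^{(i)}$ exist by the standard argument that for any countable family of invariant measures only countably many ``bad radii'' give rise to partition boundaries of positive measure. Define
\[
P_k \;=\; \bigcup_{i\leq I_k} Q_k^{(i)}\,\cup\,\{R_k\},\qquad R_k \;=\; D\setminus \bigcup_{i\leq I_k}K_i.
\]
Then $\{P_k\}$ is a refining sequence of finite Borel partitions. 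Decomposing any $\mu\in M(D,F)$ along the disjoint $F$-invariant sets $K_i$ yields the additive formula
\[
h^F(\mu,P_k)\;=\;\sum_{i\leq I_k}\mu(K_i)\, h^F(\mu^{(i)},Q_k^{(i)}),
\]
where $\mu^{(i)}=\mu|_{K_i}/\mu(K_i)$ when $\mu(K_i)>0$. This immediately gives $f_k(\mu)=0$ for $\mu\in M(K_j,F)$ with $j>I_k$, and combining the Kolmogorov--Sinai theorem on each $K_i$ with monotone convergence shows $f_k\to h$ pointwise.

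The main obstacle is to verify that $(f_k)$ is uniformly equivalent to $\H^{\mathrm{fun}}(F)$, and hence is an entropy structure. By a standard theorem of Downarowicz \cite{D}, this reduces to checking $\mu(\partial P_k)=0$ for every $F$-invariant $\mu$. The internal boundaries coming from the $Q_k^{(i)}$ satisfy this by construction, but the topological boundary of $R_k$ in $D$ can intersect some $K_j$ with $j\leq I_k$ nontrivially when the tail sets $K_i$ ($i>I_k$) accumulate onto $K_j$, and this intersection may carry positive $F$-invariant measure. To bypass this obstruction, one passes to a zero-dimensional principal extension $(X',F')$ of $(D,F)$, which exists by \cite{BD}. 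On the zero-dimensional space $X'$, clopen sets have empty topological boundary, so one constructs clopen analogues $P_k'$ of the $P_k$ that treat $\pi^{-1}(K_i)$ as a single atom for each $i>I_k$; the resulting candidate sequence is an entropy structure for $F'$, which factors through the factor map $\pi:X'\to D$ and hence, by Fact \ref{PrincExtFact} combined with uniqueness of the entropy structure up to uniform equivalence, corresponds to an entropy structure $(f_k)$ for $F$ with the claimed vanishing property.
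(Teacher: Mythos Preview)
Your partition approach runs into a real obstruction, which you correctly identify: the boundary of the remainder $R_k$ can carry positive invariant measure whenever the tail sets $K_i$ accumulate onto some $K_j$ with $j\le I_k$. However, your proposed repair via a zero-dimensional principal extension does not close this gap. The assertion that one can construct clopen partitions $P_k'$ of $X'$ that are fine on $\bigcup_{j\le I_k}\pi^{-1}(K_j)$ yet leave each $\pi^{-1}(K_i)$ (for $i>I_k$) inside a single atom is unjustified and in general false: when $K_i$ accumulates onto $K_j$ in $D$, the sets $\pi^{-1}(K_i)$ meet every clopen neighborhood of $\pi^{-1}(K_j)$ for all large $i$, so any clopen refinement near $\pi^{-1}(K_j)$ will cut through infinitely many $\pi^{-1}(K_i)$. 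Passing to a zero-dimensional extension does not dissolve the accumulation; it only makes boundaries empty. Moreover, even granting an entropy structure $(f_k')$ for $F'$ with the desired vanishing, Fact~\ref{PrincExtFact} goes in the wrong direction: it says $\H(F)\circ\pi$ is an entropy structure for $F'$, not that an entropy structure for $F'$ descends to one for $F$. Uniform equivalence of $(f_k')$ with $\H(F)\circ\pi$ on $M(X',F')$ does not produce functions $f_k$ on $M(D,F)$ with the pointwise identity $f_k|_{M(K_i,F)}\equiv 0$.

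The paper's proof avoids partitions altogether and is much shorter: it takes $(f_k)$ to be the Katok entropy structure at scales $\epsilon_k\to 0$ (Definition~\ref{KatokESDef}), which is already known to be an entropy structure. The diameter hypothesis is then used directly: once $\max_j\diam(K_i^j)<\epsilon_k$, every point of $K_i$ lies in one of $J_i$ Bowen $(n,\epsilon_k)$-balls for every $n$ (one centered in each piece, using that $F$ carries pieces to pieces in the intended application), so $h(\mu,\epsilon_k,\sigma)=0$ for every ergodic $\mu$ supported on $K_i$; harmonic extension handles non-ergodic $\mu$. Notice that your argument never uses the hypothesis $\lim_i\max_j\diam(K_i^j)=0$ at all, which is a sign that the mechanism driving the lemma has been missed: that hypothesis is there precisely to be compared with the scale $\epsilon_k$ of the Katok structure.
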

\begin{proof}
Let $(f_k)$ be the Katok entropy structure (see Definition \ref{KatokESDef}) corresponding to a sequence $\{\epsilon_k\}_k$ of positive numbers that tends to $0$. Let $k$ be given. Since $\lim_i \max_{1 \leq j \leq J_i} \diam(K_i^j) = 0$, there exists $I$ such that $i > I$ implies that $\diam(K_i^j) < \epsilon_k$ for all $1 \leq j \leq J_i$. Then for $i > I$ and ergodic $\mu$ such that $\supp(\mu) \subset K_i$, we have that $h^F(\mu, \epsilon_k,\sigma) = 0$ because $K_i$ is invariant and $\diam(K_i^j) < \epsilon_k$ for $1 \leq j \leq J_i$. Since this holds for ergodic measures $\mu$ with $\supp(\mu) \subset K_i$, it also holds for any invariant measure $\mu$ with $\supp(\mu) \subset K_i$ because $f_k$ is harmonic, which completes the proof.
\end{proof}

\begin{lem} \label{ESsynchronizationLemma}
 Let $F : D \rightarrow D$ be a continuous surjection of a compact metric space. Suppose that $\NW(F) \subseteq \sqcup_{i=0}^{\infty} K_i$, where each $K_i$ is compact, $F(K_i)=K_i$, and $K_i = \cup_{j=1}^{J_i} K_i^j$, where the sets $\{K_i^j\}_{j=1}^{J_i}$ are compact and pairwise disjoint. Also, suppose that $\lim_i \max_{1 \leq j \leq J_i} \diam(K_i^j) = 0$. For each $i$ in $\Z_{\geq 0}$ fix a harmonic entropy structure $\H^i=(h_{\ell}^i)$ for $F|_{K_i}$.

 Then there exists a harmonic entropy structure $\H(F) = (h_k^F)$ such that $h_k^F(\mu) = h_k^0(\mu)$ for $\mu$ with $\supp(\mu) \subset K_0$, and for every $i$ in $\N$, there is a non-decreasing function $\ell_i : \Z_{\geq 0} \rightarrow \Z_{\geq 0}$ with the following properties:
\begin{enumerate}
 \item if $\mu$ is in $M(D,F)$ and $\supp(\mu) \subset K_i$, then $h_k^F(\mu) = h_{\ell_i(k)}^{i}(\mu)$ for every $k$ in $\Z_{\geq 0}$.
 \item for any $k$ in $\N$, there exists $I$ in $\N$ such that $\ell_i(k) = 0$ for all $i \geq I$.
\end{enumerate}
\end{lem}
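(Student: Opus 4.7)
The plan is to take an entropy structure $(f_k)$ supplied by Lemma \ref{ESzeroLemma}, notice that its restriction to each $M(K_i,F)$ is itself an entropy structure for $F|_{K_i}$, and then ``re-glue'' the given $\H^i$'s into a new harmonic candidate sequence on $M(D,F)$ via the ergodic decomposition, choosing the indexing $\ell_i(k)$ so that uniform equivalence with $(f_k)$ survives the re-gluing.

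First, I apply Lemma \ref{ESzeroLemma} to fix a harmonic (Katok) entropy structure $(f_k)$ on $M(D,F)$ such that for each $k$ there is $I_k$ with $f_k|_{M(K_i,F)} \equiv 0$ for $i > I_k$; after replacing $I_k$ by $\max_{j\leq k} I_j + 1$ I may assume $(I_k)$ is strictly increasing with $I_k \to \infty$. Since the Katok entropy structure is computed from Bowen balls and each $K_i$ is $F$-invariant, the Bowen balls around a point $x \in K_i$ intersect $K_i$ in the Bowen balls of $F|_{K_i}$. Hence $(f_k|_{M(K_i,F)})_k$ is a Katok entropy structure for $F|_{K_i}$, and is therefore uniformly equivalent to the prescribed $\H^i$.

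Next I set $\ell_0(k)=k$, and for $i \geq 1$ I let $k_i = \min\{k : I_k > i\}$ and $\ell_i(k) = \max(0, k-k_i)$. Then each $\ell_i$ is non-decreasing, $\ell_i(k) \to \infty$ for every fixed $i$, and $\ell_i(k)=0$ whenever $i \geq I_k$, which is condition (2). For $\mu \in M(D,F)$, its ergodic decomposition $\P_\mu$ is concentrated on the union of the simplices $M(K_i,F)$ since the $K_i$ are pairwise disjoint and $F$-invariant; writing $\mu = \sum_i c_i \mu_i$ with $\mu_i \in M(K_i,F)$, I define
\[
h_k^F(\mu) \;=\; \sum_i c_i \, h_{\ell_i(k)}^i(\mu_i).
\]
This is harmonic by construction, coincides with $h_k^0$ on $M(K_0,F)$, satisfies condition (1) on each $M(K_i,F)$, is non-decreasing in $k$ (since each $\ell_i$ is and each $\H^i$ is), and converges to $h$ by bounded convergence (using $\htop(F) < \infty$ and $\ell_i(k) \to \infty$).

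The main content is showing $(h_k^F) \cong (f_k)$. For $(f_k) \leq (h_k^F)$: fix $k$ and $\epsilon > 0$; for each of the finitely many $i \leq I_k$, uniform equivalence of $(f_j|_{M(K_i,F)})_j$ with $\H^i$ produces $m_i$ with $f_k \leq h_{m_i}^i + \epsilon$ on $M(K_i,F)$; pick $m$ with $\ell_i(m) \geq m_i$ for each such $i$ (possible since $\ell_i(m) \to \infty$ and the index set is finite), and use harmonicity of $f_k$ together with the ergodic decomposition to obtain $f_k \leq h_m^F + \epsilon$ on $M(D,F)$. For $(h_k^F) \leq (f_k)$: fix $k$ and $\epsilon > 0$; for each $i$ with $\ell_i(k) > 0$---necessarily $i < I_k$, hence finitely many---choose $n_i$ with $h_{\ell_i(k)}^i \leq f_{n_i} + \epsilon$ on $M(K_i,F)$ and set $m = \max n_i$; for $i \geq I_k$ one has $h_{\ell_i(k)}^i = h_0^i \equiv 0$, so the estimate is trivial there. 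Summing over the ergodic decomposition and invoking harmonicity of $f_m$ yields $h_k^F \leq f_m + \epsilon$. The key obstacle, and precisely the point where condition (2) enters, is this finiteness: without $\ell_i(k)=0$ eventually, the supremum $\max n_i$ in the second direction could be infinite and the uniform equivalence would collapse.
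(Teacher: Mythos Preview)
Your proof is correct and follows essentially the same route as the paper: start from the Katok entropy structure supplied by Lemma~\ref{ESzeroLemma}, observe that its restriction to each $M(K_i,F)$ is an entropy structure for $F|_{K_i}$ and hence uniformly equivalent to $\H^i$, define $h_k^F$ on ergodic measures via the $\H^i$'s, extend harmonically, and verify uniform equivalence with $(f_k)$. The only real difference is cosmetic: the paper defines $\ell_i(k)=\min\{\ell\geq 0:h_\ell^i\geq f_k|_{K_i}-\delta_k\}$ adaptively (so that one direction of the uniform equivalence is built into the definition), whereas you use the rigid formula $\ell_i(k)=\max(0,k-k_i)$ and argue both directions from scratch; both choices work and the overall structure is the same.
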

\begin{proof}
Let $\F = (f_k)$ be a harmonic entropy structure for $F$ with the property that for every $k$ there exists $I$ such that if $i > I$ then $f_k|_{M(K_i,F)} \equiv 0$ (such entropy structures exist by Lemma \ref{ESzeroLemma}). Let $\delta_k >0$ be a sequence tending to $0$. Let $i$ be in $\N$. Since $(f_k|_{M(K_i,F)})$ and $(h_{\ell}^i)$ are both an entropy structures for $F|_{K_i}$, we have that $(f_k|_{M(K_i,F)})$ and $(h_{\ell}^{i})$ are uniformly equivalent. Using the definition of uniform equivalence (in particular the fact that $(f_k|_{M(K_i,F)})$ is uniformly dominated by $(h_{\ell}^{i})$), we define $\ell_i(k) = \min \{ \ell \geq 0 : h_{\ell}^{i} \geq f_k|_{K_i} - \delta_k\}$ for each $k$ in $\Z_{\geq 0}$. By construction, $\ell_i$ is non-decreasing. For ergodic measures $\mu$ in $M(K_i,F)$, let $h_k^F(\mu) = h_{\ell_i(k)}^{i}(\mu)$. For ergodic $\mu$ in $M(K_0,F)$, let $h_k^F(\mu) = h_k^0(\mu)$.

Since every ergodic measure for $F$ is in $\cup_i M(K_i,F)$, we have defined $h_k^F$ for all ergodic measures. Define $h_k^F$ on all non-ergodic measures by harmonic extension, and let $\H(F)=(h_k^F)$. Note that since $h_{\ell_i(k)}^{i} $ is harmonic, for $\mu$ in $M(K_i,F)$, we have that $h_k^F(\mu) = h_{\ell_i(k)}^{i}(\mu)$ (which shows that if $(h_k^F)$ is an entropy structure, then it satisfies property (1) by definition).  By construction, $\H(F)$ is harmonic. It remains to check that $\H(F)$ is an entropy structure for $F$.

We show that $\H(F)$ is uniformly equivalent to $\F$, which implies that $\H(F)$ is an entropy structure for $F$. Since $\F$ and $\H(F)$ are harmonic, we may restrict attention to ergodic measures. Fix $k$ and $\epsilon > 0$, and choose $k' \geq k$ large enough that $\delta_{k'} < \epsilon$. Then for every ergodic $\mu$, we have that $\mu$ is in some $M(K_i,F)$, and $h_{k'}^F(\mu) = h_{\ell_i(k')}^{i}(\mu) \geq f_{k'}(\mu) - \delta_{k'} \geq f_k(\mu) - \epsilon$. Hence $\H(F) \geq \F$. Again, fix $k$ and $\epsilon >0$. Choose $I$ such that $f_k|_{M(K_i,F)} \equiv 0$ for all $i > I$ (such an $I$ exists by the choice of the sequence $(f_k)$). Then it follows from the definition of $\ell_i(k)$ that $\ell_i(k)=0$ for all $i > I$ (showing property (2)). Using that $(f_k|_{M(K_i,F)})$ and $(h_{\ell}^{i})$ are uniformly equivalent for each $i \leq I$ (in particular, $(f_k|_{M(K_i,F)})$ uniformly dominates $(h_{\ell}^{i})$), there exists $k_i$ such that $f_{k_i}|_{M(K_i,F)} \geq h_{\ell_i(k)}^{i} - \epsilon$. Let $k' = \max(k_0, \dots, k_I)$. Any ergodic measure $\mu$ is in $M(K_i,F)$ for some $i$. Let $\mu$ be ergodic in and contained in $M(K_i,F)$. If $i \leq I$, then $f_{k'}(\mu) \geq f_{k_i}(\mu) \geq h_{\ell_i(k)}^{i} ( \mu) - \epsilon = h_{k}^F(\mu) - \epsilon$. If $i > I$, then $f_{k'}(\mu) = 0 \geq -\epsilon = h_{\ell_i(k)}^{i}(\mu) - \epsilon = h_k^F(\mu) - \epsilon$. Since these same bounds hold for all ergodic $\mu$, we have that $f_{k'} \geq h_k^F - \epsilon$, and we have shown that $\F$ uniformly dominates $\H(F)$. Then $\F$ and $\H(F)$ are uniformly equivalent, and we conclude that $\H(F)$ is an entropy structure for $F$. This concludes the proof of the lemma.
\end{proof}

\begin{prop} \label{ESofFProp}
Let $f$, $\{\theta_m\}_{m \in \N}$, $\{\chi_m\}_{m \in \N}$, $\{\theta_{\ell}^m\}_{m,\ell \in \N}$,  $\{\xi_m\}_{m \in \N}$, $F$, $\pi$, $\{K_i\}_{i}$, and $\{\phi_m\}_m$ all be as in Proposition \ref{BlowAndSewProp}. For each $m$ in $\N$, let $S_m = \phi_m^{-1} \circ F|_{K_m} \circ \phi_m$ and let $\psi_m$ be the map associated to the tower $S_m$ over $\chi_m$ (Definition \ref{TowerMapDef}). For each $m$ in $\N$, let $\H(\chi_m) = (h_k^{\chi_m})$ be a harmonic entropy structure for $\chi_m$, and let $\H(f) = (h_k^f)$ be a harmonic entropy structure for $f$. Then there exists a harmonic entropy structure $\H(F) = (h_k^{F})$ for $F$ such that
\begin{enumerate}
 \item for $\mu$ with $\supp(\mu) \subset K_0$, $h_k^{F}(\mu) = h_k^f(\pi(\mu))$;
 \item for every $m$ and $k$, there exists $k'$ such that for each $\mu$ with $\supp(\mu) \subset K_m$, $h_k^{F}(\mu) = \frac{\xi_m}{|\theta_m|} h_{k'}^{\chi_m}( \psi_m ( (\phi_m^{-1})(\mu)))$;
 \item for every $k$ there exists $m_0$ such that if $m \geq m_0$ and $\supp(\mu) \subset K_m$, then $h_k^F(\mu) = 0$.
\end{enumerate}
\end{prop}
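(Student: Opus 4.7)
The plan is to apply Lemma \ref{ESsynchronizationLemma} to the decomposition $\NW(F) \subseteq \sqcup_{i=0}^{\infty} K_i$ provided by property (5) of Proposition \ref{BlowAndSewProp}, after first building a harmonic entropy structure on each $K_i$. I begin by checking the diameter condition. By property (4) each $K_i$ is compact and $F$-invariant, and for $m \geq 1$ the decomposition $K_m = \cup_{q_k \in \theta_m} \overline{B(p_k, \frac{1}{2}\epsilon_k)}$ exhibits $K_m$ as a disjoint union of $|\theta_m|$ closed balls, the $k$-th of diameter $\epsilon_k$. The sequence $\{\epsilon_k\}$ produced by Lemma \ref{BlowUpLemma} is summable, hence tends to $0$. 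Because the periodic orbits $\theta_m$ are pairwise disjoint, each index $k$ in the enumeration of $Q$ can appear in at most one orbit $\theta_m$; a pigeonhole count then yields $a_m := \min\{k : q_k \in \theta_m\} \to \infty$ as $m \to \infty$, so $\max_{q_k \in \theta_m} \epsilon_k \to 0$ and the hypothesis $\lim_i \max_j \diam(K_i^j) = 0$ is satisfied.

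Next, for each $i$ I construct a harmonic entropy structure $\H^i$ for $F|_{K_i}$. For $i = 0$, property (6) of Proposition \ref{BlowAndSewProp} asserts that $F|_{K_0}$ is a principal extension of $f$ via $\pi|_{K_0}$, so Fact \ref{PrincExtFact} tells us $\H^0 := \H(f) \circ \pi|_{K_0}$ is an entropy structure for $F|_{K_0}$; it is harmonic because $\pi$ is a factor map, so the pushforward on measures is continuous, affine, and maps ergodic measures to ergodic measures. For $m \geq 1$, property (7) gives a topological conjugacy $\phi_m$ between $F|_{K_m}$ and the $(|\theta_m|, \xi_m)$ tower $S_m$ over $\chi_m$; Lemma \ref{TowerLemmaOne} provides a harmonic entropy structure $\frac{\xi_m}{|\theta_m|} \H(\chi_m) \circ \psi_m$ for $S_m$, and pulling back through the affine homeomorphism on measures induced by $\phi_m$ yields a harmonic entropy structure $\H^m = (h_k^m)$ on $M(K_m, F)$ with $h_k^m(\mu) = \frac{\xi_m}{|\theta_m|} h_k^{\chi_m}(\psi_m(\phi_m^{-1}(\mu)))$.

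Applying Lemma \ref{ESsynchronizationLemma} to $\{\H^i\}_{i \geq 0}$ then produces a harmonic entropy structure $\H(F) = (h_k^F)$ for $F$ such that $h_k^F|_{M(K_0, F)} = h_k^0 = h_k^f \circ \pi$, which is property (1), and such that for each $m \geq 1$ there is a non-decreasing function $\ell_m$ with $h_k^F(\mu) = h_{\ell_m(k)}^m(\mu) = \frac{\xi_m}{|\theta_m|} h_{\ell_m(k)}^{\chi_m}(\psi_m(\phi_m^{-1}(\mu)))$ on $M(K_m, F)$; setting $k' = \ell_m(k)$ delivers property (2). Lemma \ref{ESsynchronizationLemma} also guarantees that for each $k$ there exists $m_0$ with $\ell_m(k) = 0$ whenever $m \geq m_0$; since the convention $h_0 \equiv 0$ applies to the candidate sequence $\H(\chi_m)$, this forces $h_k^F|_{M(K_m, F)} \equiv 0$ for all such $m$, giving property (3).

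I expect the main obstacle to be the harmonicity bookkeeping when pulling entropy structures back through $\pi$ on $K_0$ and through $\phi_m^{-1}$ followed by $\psi_m$ on $K_m$. The point is that each of these maps is continuous and affine on measures and is compatible with the ergodic decomposition: $\pi$ because it is a factor map, $\phi_m^{-1}$ because it is a topological conjugacy, and $\psi_m = \pi_3 \circ \pi_2 \circ \pi_1$ by the linearity of the averaging operator $\pi_3$ as used in the proof of Lemma \ref{TowerLemmaOne}. Once this is verified, each $\H^i$ is genuinely harmonic and the output of Lemma \ref{ESsynchronizationLemma} is the desired harmonic entropy structure.
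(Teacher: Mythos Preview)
Your proof is correct and follows essentially the same approach as the paper: build entropy structures on $K_0$ via Fact \ref{PrincExtFact} and on each $K_m$ via Lemma \ref{TowerLemmaOne} composed with the conjugacy $\phi_m$, then feed these into Lemma \ref{ESsynchronizationLemma}. You supply more detail than the paper does (verifying the diameter hypothesis of Lemma \ref{ESsynchronizationLemma} and the harmonicity bookkeeping), but the underlying argument is identical.
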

\begin{proof}
By Fact \ref{PrincExtFact}, $(h_k^f \circ \pi)$ is an entropy structure for $F|_{K_0}$. By Lemma \ref{TowerLemmaOne}, $(\frac{\xi_m}{|\theta_m|}h_k^{\chi_m} \circ \psi_m)$ is an entropy structure for $S_m$. Since $\phi_m$ is a topological conjugacy between $S_m$ and $F|_{K_m}$, we have that $(\frac{\xi_m}{|\theta_m|}h_k^{\chi_m} \circ \psi_m \circ (\phi_m^{-1}))$ is an entropy structure for $F|_{K_m}$. Then Lemma \ref{ESsynchronizationLemma} gives that these entropy structures can be combined to form an entropy structure for $F$ satisfying properties (1)-(3).
\end{proof}

The following corollary is a consequence of Proposition \ref{ESofFProp}, but one may also check it directly as in the proof of Claim \ref{claimTen}.
\begin{cor} \label{HtopCor}
Let $f$, $\{\theta_m\}_{m \in \N}$, $\{\chi_m\}_{m \in \N}$, $\{\theta_{\ell}^m\}_{m,\ell \in \N}$, and $\{\xi_m\}_{m \in \N}$ be as in the hypotheses of Proposition \ref{BlowAndSewProp}. Let $F$ be in $\Bl( f, \{\theta_m\}_m, \{\chi_m\}_m, \{\theta_{\ell}^m\}_{m,\ell}, \{\xi_m\}_m )$. Then
\begin{equation*}
 \htop(F) = \max \Bigl( \htop(f), \; \sup_{m} \frac{\xi_m}{|\theta_m|} \htop( \chi_m) \Bigr)
\end{equation*}
\end{cor}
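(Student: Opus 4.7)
The plan is to apply the variational principle together with a decomposition of $M(\D,F)$ according to the invariant pieces $K_i$ produced by Proposition \ref{BlowAndSewProp}. By Claim \ref{claimFive}, $\NW(F) \subseteq \bigcup_{i \geq 0} K_i$, and since the support of any $F$-invariant measure is contained in $\NW(F)$ and the sets $K_i$ are $F$-invariant and pairwise disjoint, every ergodic $\nu \in M(\D,F)$ is supported on exactly one $K_i$. By the ergodic decomposition and the affinity of entropy,
\begin{equation*}
\htop(F) \;=\; \sup_{\nu \in M_{\erg}(\D,F)} h^F(\nu) \;=\; \max\Bigl(\sup_{\nu \in M(K_0,F)} h^F(\nu),\; \sup_{m \geq 1}\,\sup_{\nu \in M(K_m,F)} h^F(\nu)\Bigr).
\end{equation*}

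For the $K_0$ term, Claim \ref{claimSix} says that $F|_{K_0}$ is a principal extension of $f$ via $\pi|_{K_0}$, so $h^F(\nu) = h^f(\pi(\nu))$ for every $\nu \in M(K_0,F)$. Since $\pi$ is a factor map from $(\D,F)$ to $(\D,f)$ (Claim \ref{claimTwo}) and $\pi$ restricted to $K_0$ is still surjective onto $\D$ (because $K_0 = \D \setminus \interior(L)$ contains a preimage of every point of $\D$, in particular of each $q_k$ via boundary points of $L_k$), the variational principle gives $\sup_{\nu \in M(K_0,F)} h^F(\nu) = \htop(f)$. For the $K_m$ terms, Claim \ref{claimSeven} says that $F|_{K_m}$ is topologically conjugate via $\phi_m$ to a $(|\theta_m|,\xi_m)$ tower over $\chi_m$. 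A standard Abramov-type calculation (or Lemma \ref{PowerESLemma} applied at the level of topological entropy) gives that the topological entropy of such a tower is $\frac{\xi_m}{|\theta_m|}\htop(\chi_m)$, so $\sup_{\nu \in M(K_m,F)} h^F(\nu) = \frac{\xi_m}{|\theta_m|}\htop(\chi_m)$.

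Combining these two identities with the decomposition above yields exactly
\begin{equation*}
\htop(F) = \max\Bigl(\htop(f),\; \sup_m \frac{\xi_m}{|\theta_m|}\htop(\chi_m)\Bigr),
\end{equation*}
as required. Alternatively, one can derive this directly from Proposition \ref{ESofFProp}: the entropy structure $(h_k^F)$ converges pointwise to $h^F$, and properties (1)--(2) of that proposition express $h^F$ on $M(K_0,F)$ and $M(K_m,F)$ respectively as $h^f \circ \pi$ and $\frac{\xi_m}{|\theta_m|} h^{\chi_m} \circ \psi_m \circ \phi_m^{-1}$, after which the variational principle gives the same conclusion. The only substantive step is the entropy formula for the tower $F|_{K_m}$; everything else is bookkeeping from the properties already established in Claims \ref{claimFive}--\ref{claimSeven}, and the hypothesis $\sup_m \frac{\xi_m}{|\theta_m|}\htop(\chi_m) < \infty$ ensures the right-hand side is finite (consistent with $F \in \Cl_d$).
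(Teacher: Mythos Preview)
Your proposal is correct and follows essentially the same route as the paper. The paper does not give a separate proof of this corollary: it simply remarks that the formula is a consequence of Proposition \ref{ESofFProp}, or alternatively can be checked directly as in the proof of Claim \ref{claimTen}, which is exactly the variational-principle argument over the decomposition $\NW(F)\subset\bigcup_i K_i$ using Claims \ref{claimFive}--\ref{claimSeven} that you spell out.
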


The following lemma is used to compute the transfinite sequence associated to some of the systems in Section \ref{TwoLemmas}. In this lemma we combine our lemma for principal extensions (Lemma \ref{PrincExtLemma}) and our lemma for towers (Lemma \ref{TowerLemmaThree}) with our analysis of the ``blow-and-sew" construction (Proposition \ref{BlowAndSewProp}) to give a precise description of the measures and transfinite sequences of some maps constructed by the ``blow-and-sew" operation.
\begin{lem} \label{totallyErgLemma}
Let $f$, $\{\theta_m\}_{m \in \N}$, $\{\chi_m\}_{m \in \N}$, $\{\theta_{\ell}^m\}_{m,\ell \in \N}$,  $\{\xi_m\}_{m \in \N}$, $F$, $\pi$, $\{K_i\}_{i}$, and $\{\phi_m\}_m$ all be as in Proposition \ref{BlowAndSewProp}. Let $\{\Theta_k\}_k$ be an enumeration of the $F$-periodic orbits in $\cup_{m,\ell} \, \phi_m(\theta_{\ell}^m \times \{0, \dots, |\theta_m|-1\})$. Let $M = \cup_i M(K_i,F)$, $C(f) = \cap_{n=1}^{\infty} \overline{ \cup_{m\geq n} \{\mu_{\theta_m}\} }$, $C(\chi_m) = \cap_{n=1}^{\infty} \overline{ \cup_{\ell \geq n} \{\mu_{\theta_{\ell}^m}\} }$, and $C(F) = \cap_{n=1}^{\infty} \overline{ \cup_{k \geq n} \{\mu_{\Theta_k}\} }$. Suppose that
\renewcommand{\labelenumi}{(\roman{enumi})}
\begin{enumerate}
 \item each $\mu$ in  $C(f)$ is totally ergodic for $f$;
 \item for each $\mu$ in $M(\D,f)$ and each ordinal $\gamma$,
\begin{equation*}
u_{\gamma}^{\H(f)}(\mu) = \int_{C(f)} u_{\gamma}^{\H(f)|_{C(f)}} \; d\P_{\mu};
\end{equation*}
 \item each $\mu$ in $C(\chi_m)$ is totally ergodic for $\chi_m$;
 \item for each $\mu$ in $M(\D,\chi_m)$ and each ordinal $\gamma$,
\begin{equation*}
u_{\gamma}^{\H(\chi_m)}(\mu) = \int_{C(\chi_m)} u_{\gamma}^{\H(\chi_m)|_{C(\chi_m)}} \; d\P_{\mu};
\end{equation*}
 \item either $\htop(F|_{K_m})$ tends to $0$ as $m$ tends to infinity, or for each $m \geq 1$, $\al_0(F|_{K_m}) = 0$ and $\htop(F|_{K_m}) = h^{F|_{K_m}}(\mu)$ for $\mu$ in $C(F) \cap M(K_m,F)$.
\end{enumerate}
Then
\renewcommand{\labelenumi}{(\arabic{enumi})}
\begin{enumerate}
\item each measure $\nu$ in $C(F)$ is totally ergodic for $F$;
\item $C(F) = \pi^{-1}(C(f)) \cup \Bigl( \bigcup_m \phi_m(\psi_m^{-1}(C(\chi_m))) \Bigr) $;
\item $\pi$ maps $C(F) \cap M(K_0,F)$ homeomorphically onto $C(f)$;
\item $\psi_m \circ (\phi_m)^{-1}$ maps $C(F) \cap M(K_m,F)$ homeomorphically onto $C(\chi_m)$.
\item for all $x$ in $M$ and all ordinals $\gamma$,
\begin{equation*}
u_{\gamma}^{\H(F)|_M}(x) \leq \int_{C(F)} u_{\gamma}^{\H(F)|_{C(F)}} \; d\P_x.
\end{equation*}
\end{enumerate}
\end{lem}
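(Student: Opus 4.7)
The plan is to prove conclusions (1)--(4) by a direct analysis of the periodic orbits $\Theta_k$ and their limits, and then to prove (5) by combining Lemma \ref{PrincExtLemma} and Lemma \ref{TowerLemmaThree} with the structural properties of the entropy structure $\H(F)$ produced in Proposition \ref{ESofFProp}. For (1)--(4), each $F$-periodic orbit $\Theta_k$ lies in a unique $K_{m_k}$ and, via the topological conjugacy $\phi_{m_k}$, corresponds to a periodic orbit of the tower $S_{m_k}$ over $\chi_{m_k}$ associated to some $\chi_{m_k}$-periodic orbit $\theta_{\ell_k}^{m_k}$; the tower bookkeeping yields $\pi(\mu_{\Theta_k}) = \mu_{\theta_{m_k}}$ and $\psi_{m_k} \circ \phi_{m_k}^{-1}(\mu_{\Theta_k}) = \mu_{\theta_{\ell_k}^{m_k}}$. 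Given a convergent subsequence $\mu_{\Theta_{k_j}} \to \nu$, I would pass to a further subsequence so that either (a) $m_{k_j}$ takes some value $m$ infinitely often, in which case Lemma \ref{TowerLemmaThree} applied to $S_m$ over $\chi_m$ (using (iii) and (iv) to verify its hypotheses) and transported through $\phi_m$ gives that $\nu \in \phi_m(\psi_m^{-1}(C(\chi_m)))$ is totally ergodic; or (b) $m_{k_j} \to \infty$, in which case upper semi-continuity of $\mu \mapsto \mu(K_m)$ forces $\nu(K_m) = 0$ for every $m \ge 1$, so $\nu \in M(K_0,F)$, and continuity of $\pi$ gives $\pi(\nu) = \lim_j \mu_{\theta_{m_{k_j}}} \in C(f)$. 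Readiness of $f$ on $\cup_m \theta_m$ forces $\pi(\nu)(\cup_m \theta_m) = 0$, hence $\nu(\cup_k L_k) = 0$, placing $\nu$ in $M(K_0 \setminus \cup_k \partial L_k, F)$; Proposition \ref{BlowAndSewProp}(6) then lifts the total ergodicity of $\pi(\nu)$ supplied by (i) to $\nu$. Both inclusions in (2) follow, the reverse one by explicitly approximating each element of the right-hand side by a diagonal sequence of $\mu_{\Theta_k}$'s. Uniqueness of the preimage $\pi^{-1}(\mu_0) \cap M(K_0,F)$ for $\mu_0 \in C(f)$ follows because any such lift has $\nu(K_m) \le \mu_0(\theta_m) = 0$ for $m \ge 1$ and $\nu(\cup_k L_k) = 0$, combined with injectivity of $\pi$ on $K_0 \setminus \cup_k L_k$ from Lemma \ref{BlowUpLemma}; this gives (3), and transporting Lemma \ref{TowerLemmaThree}(2) through $\phi_m$ gives (4).

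For (5) I would fix the harmonic entropy structure $\H(F)$ of Proposition \ref{ESofFProp}, so that $\H(F)|_{M(K_0,F)} \cong \H(f) \circ \pi$ and $\H(F)|_{M(K_m,F)} \cong \frac{\xi_m}{|\theta_m|} \H(\chi_m) \circ \psi_m \circ \phi_m^{-1}$. Applying Lemma \ref{PrincExtLemma} to the principal extension $F|_{K_0} \to f$ (hypothesis (ii) supplies equation (\ref{HoffHypothesis}), and conclusion (3) above supplies the homeomorphism condition on $\pi|_{\pi^{-1}(C(f))}$) gives
\begin{equation*}
u_\gamma^{\H(F)|_{M(K_0,F)}}(\nu) = \int_{C(F) \cap M(K_0,F)} u_\gamma^{\H(F)|_{C(F) \cap M(K_0,F)}} \, d\P_\nu
\end{equation*}
for $\nu \in M(K_0,F)$. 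Applying Lemma \ref{TowerLemmaThree} to each tower $S_m$ over $\chi_m$ (its hypotheses supplied by (iii) and (iv)) and pulling back through $\phi_m$ gives the analogous equality on each $M(K_m,F)$, with $C(F) \cap M(K_m,F)$ in place of $C(F) \cap M(K_0,F)$. Fact \ref{TransSeqFacts}(2) then upgrades the inner restriction, $u_\gamma^{\H(F)|_{C(F) \cap M(K_i,F)}} \le u_\gamma^{\H(F)|_{C(F)}}$, so that
\begin{equation*}
u_\gamma^{\H(F)|_{M(K_i,F)}}(\nu) \le \int_{C(F)} u_\gamma^{\H(F)|_{C(F)}} \, d\P_\nu
\end{equation*}
for every $\nu \in M(K_i,F)$.

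The main obstacle is upgrading this bound from $u_\gamma^{\H(F)|_{M(K_i,F)}}$ to $u_\gamma^{\H(F)|_M}$, since Fact \ref{TransSeqFacts}(2) only gives the reverse inequality $u_\gamma^{\H(F)|_{M(K_i,F)}} \le u_\gamma^{\H(F)|_M}$. The discrepancy is that the envelope $\widetilde{\cdot}^M$ at a point $\nu \in M(K_i,F)$ can pick up contributions from sequences $y_n \to \nu$ with $y_n \in M(K_{j_n},F)$ and $j_n \ne i$ varying, which the envelope $\widetilde{\cdot}^{M(K_i,F)}$ misses. I would resolve this by transfinite induction on $\gamma$, using Proposition \ref{ESofFProp}(3)---for each $k$ there is $m_0(k)$ with $h_k^F \equiv 0$ on $M(K_m,F)$ for $m \ge m_0(k)$, so the difference $\tau_k^F(y_n) = h^F(y_n) \le \htop(F|_{K_{j_n}})$ whenever $j_n \ge m_0(k)$---together with hypothesis (v). Under the first alternative of (v), $\htop(F|_{K_{j_n}}) \to 0$ absorbs the jumping contribution; under the second alternative, $\al_0(F|_{K_m}) = 0$ for $m \ge 1$ gives $u_\gamma^{\H(F)|_{M(K_m,F)}} \equiv 0$ for $\gamma \ge 1$, and the entropy excess $h^F(y_n)$ matches the value of $h^F$ at a measure in $C(F) \cap M(K_{j_n},F)$, which is already captured by the right-hand side of the desired inequality. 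The bookkeeping required to carry these two cases uniformly through the successor and limit steps of the transfinite induction is the principal technical difficulty I expect.
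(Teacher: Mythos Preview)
Your proposal is correct and follows essentially the same approach as the paper. For (1)--(4) you reprove by hand some facts that the paper simply cites from Proposition \ref{BlowAndSewProp}(8), but the arguments are equivalent. For (5) you assemble exactly the same ingredients: Lemma \ref{PrincExtLemma} on $M(K_0,F)$, Lemma \ref{TowerLemmaThree} on each $M(K_m,F)$, Fact \ref{TransSeqFacts}(2) to enlarge $C(F)\cap M(K_i,F)$ to $C(F)$, then a transfinite induction using Proposition \ref{ESofFProp}(3) and the dichotomy in hypothesis (v) to control sequences jumping between the $K_m$'s.

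The one simplification the paper uses that you do not mention: Proposition \ref{BlowAndSewProp}(8) implies that each $M(K_m,F)$ for $m\ge 1$, and also $M(K_0,F)\setminus C(F)$, is \emph{open} in $M$. Hence Fact \ref{TransSeqFacts}(1) gives $u_\gamma^{\H(F)|_M}(x)=u_\gamma^{\H(F)|_{M(K_i,F)}}(x)$ for $x$ in those sets directly, with no induction at all. The transfinite induction is then confined to points $x\in C(F)\cap M(K_0,F)$, which are ergodic, so $\P_x=\delta_x$ and the target inequality reduces to the pointwise bound $u_\gamma^{\H(F)|_M}(x)\le u_\gamma^{\H(F)|_{C(F)}}(x)$. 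This localization makes the induction substantially cleaner, since at the inductive step the limsup over $y\to x$ splits into the three pieces $y\in C(F)$, $y\in M(K_0,F)\setminus C(F)$, and $y\in\bigcup_{m\ge 1}M(K_m,F)$, and for the latter two the already-established (non-inductive) equalities feed directly in. Your plan to induct over all of $M$ would also work, but carrying the integral $\int_{C(F)}u_\gamma^{\H(F)|_{C(F)}}\,d\P_x$ through the limsup at non-ergodic $x$ is more awkward than necessary.
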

\begin{proof}
Let $\nu$ be in $C(F)$. Since $\cap_{n=1}^{\infty} \overline{ \cup_{m \geq n} M(K_m,F)} \subset M(K_0,F)$ (conclusion (8) in Proposition \ref{BlowAndSewProp}), we have that $\nu$ is in $M(K_i,F)$ for some $i$.

Suppose $\nu$ is in $M(K_0,F)$. Then $\nu$ is in $M(K_0 \setminus \cup_k \partial L_k,F)$ and $\pi(\nu)$ is in $C(f)$ by conclusion (8) in Proposition \ref{BlowAndSewProp}. By conclusion (6) in Proposition \ref{BlowAndSewProp}, we have that $\pi$ gives a measure preserving isomorphism between $\nu$ and $\pi(\nu)$. The fact that $\nu$ is totally ergodic now follows from the hypothesis that $\pi(\nu)$ is totally ergodic (since it is in $C(f)$).

Now suppose that $\nu$ is in $M(K_m,F)$ for some $m$ in $\N$. By conclusion (7) in Proposition \ref{BlowAndSewProp}, the map $\phi_m$ is a topological conjugacy between $F|_{K_m}$ and a $(|\theta_m|,\xi_m)$ tower over $\chi_m$. By Lemma \ref{TowerLemmaThree}, $(\phi_m^{-1})(\nu)$ is totally ergodic, and therefore $\nu$ is totally ergodic, proving (1).

Property (3) is contained in conclusion (8) of Proposition \ref{BlowAndSewProp}. Using that $\phi_m$ is a topological conjugacy between $F|_{K_m}$ and a tower over $\chi_m$, we obtain property (4) from Lemma \ref{TowerLemmaThree} (2). Then property (2) follows from properties (3) and (4) the fact that $C(F) = \cup_{i \geq 0} M(K_i,F) \cap C(F)$.

Now we prove (5). First note that for $m \geq 1$, $M(K_m,F)$ is open in $M$, since $\cap_{n=1}^{\infty} \overline{ \cup_{m \geq n} M(K_m,F)} \subset M(K_0,F)$ (conclusion (8) in Proposition \ref{BlowAndSewProp}). Then Fact \ref{TransSeqFacts} (1) implies that for all $x$ in $M(K_m,F)$, $u_{\gamma}^{\H(F)|_M}(x) = u_{\gamma}^{\H(F)|_{M(K_m,F)}}(x)$. Furthermore, Lemma \ref{TowerLemmaThree} (2) and monotonicity (Fact \ref{TransSeqFacts} (2)) give that for $x$ in $M(K_m,F)$,
\begin{align}
u_{\gamma}^{\H(F)|_{M(K_m,F)}}(x) & = \int_{C(F) \cap M(K_m,F)} u_{\gamma}^{\H(F)|_{C(F) \cap M(K_m,F)}} \; d\P_x \label{intFormulaEqn} \\ & \leq \int_{C(F) } u_{\gamma}^{\H(F)|_{C(F)}} \; d\P_x,
\end{align}
which gives the desired inequality for all $x$ in $\cup_{m \geq 1} M(K_m,F)$.

Next, note that $M(K_0,F) \setminus  C(F)$ is open in $M$ (by Proposition \ref{BlowAndSewProp} (8)). Then Fact \ref{TransSeqFacts} (1) gives that for all $x$ in $M(K_0,F)$, $u_{\gamma}^{\H(F)|_M}(x) = u_{\gamma}^{\H(F)|_{M(K_0,F)}}(x)$. By Lemma \ref{PrincExtLemma} and Fact \ref{TransSeqFacts} (2), we obtain that for all $x$ in $M(K_0,F) \setminus C(F)$,
\begin{align*}
u_{\gamma}^{\H(F)|_{M(K_0,F)}}(x) & = \int_{C(F) \cap M(K_0,F)} u_{\gamma}^{\H(F)|_{C(F) \cap M(K_0,F)}} \; d\P_x \\ &  \leq \int_{C(F) } u_{\gamma}^{\H(F)|_{C(F)}} \; d\P_x,
\end{align*}
which gives the desired inequality for all $x$ in $M(K_0,F) \setminus C(F)$.

Lastly, we show (5) for all $x$ in $C(F) \cap M(K_0,F)$ using transfinite induction. Note that $C(F) \cap M(K_0,F) \subset M_{\erg}(\D,F)$, and therefore $\P_x$ is just the point mass at $x$. Thus for $x$ in $C(F) \cap M(K_0,F)$ property (5) is equivalent to $u_{\gamma}^{\H(F)|_M}(x) \leq u_{\gamma}^{\H(F)|_{C(F)}}(x)$. Property (5) holds trivially for $\gamma = 0$. Now suppose for the sake of induction it holds for an ordinal $\gamma$, and we show it holds for $\gamma+1$. For the sake of notation, let $M_i = M(K_i,F) \setminus C(F)$. Let $x$ be in $C(F) \cap M(K_0,F)$. Then using the induction hypothesis and our computation of the transfinite sequence for $y$ in $M \setminus (C(F) \cap M(K_0,F))$,
\begin{align*}
\limsup_{\substack{y \to x \\ y \in M }} & (u_{\gamma}^{\H(F)|_M} + \tau_k)(y)  \\
&= \max \Bigl(\limsup_{\substack{y \to x \\ y \in C(F) }} (u_{\gamma}^{\H(F)|_M}+ \tau_k)(y), \; \limsup_{\substack{y \to x \\ y \in M_0 }} (u_{\gamma}^{\H(F)|_M}+ \tau_k)(y), \\ & \quad \quad \quad \quad \quad \quad \quad \quad \quad \quad \quad \quad \quad \quad \quad \quad \quad \quad \limsup_{\substack{y \to x \\ y \in \cup_{m\geq 1} M_m }} (u_{\gamma}^{\H(F)|_M}+ \tau_k)(y) \Bigr) \\
&\leq \max \Bigl(\limsup_{\substack{y \to x \\ y \in C(F) }} (u_{\gamma}^{\H(F)|_{C(F)}}+ \tau_k)(y), \; \limsup_{\substack{y \to x \\ y \in M_0 }} (u_{\gamma}^{\H(F)|_{M(K_0,F)}}+ \tau_k)(y), \\ & \quad \quad \quad \quad \quad \quad \quad \quad \quad \quad \quad \quad \quad \quad \quad \quad \quad \quad  \limsup_{\substack{y_{m_{\ell}} \to x \\ y_{m_{\ell}} \in M_{m_{\ell}} } } (u_{\gamma}^{\H(F)|_{M(K_{m_{\ell}},F)}}+ \tau_k)(y_{m_{\ell}}) \Bigr).
\end{align*}
Letting $k$ tend to infinity in the above expressions gives that
\begin{align} \label{PenUltEqn1}
u_{\gamma+1}^{\H(F)|_M}(x) \leq \max\Bigl(  u_{\gamma+1}^{\H(F)|_{C(F)}}(x), & \; u_{\gamma+1}^{\H(F)|_{M(K_0,F)}}(x), \\ & \limsup_{\substack{y_{m_{\ell}} \to x \\ y_{m_{\ell}} \in M_{m_{\ell}}}} u_{\gamma}^{\H(F)|_{M(K_{m_{\ell}},F)}}(y_{m_{\ell}})+\htop(F|_{K_{m_{\ell}}})\Bigr) \label{PenUltEqn2}.
\end{align}
We would like to show that the expression in the right-hand side of Equation (\ref{PenUltEqn1})-(\ref{PenUltEqn2}) is less than or equal to $u_{\gamma+1}^{\H(F)|_{C(F)}}(x)$, and we prove this bound by analyzing each expression in the maximum individually. The bound is trivial for the first expression. By Lemma \ref{PrincExtLemma} (applied to $F|_{K_0}$, which is a principal extension of $f$, with $C(f)$ in place of $C$), we have that for $x$ in $M(K_0,F)$,
\begin{equation} \label{UsePrincExtLemma}
u_{\gamma+1}^{\H(F)|_{M(K_0,F)}}(x) = \int_{C(F) \cap M(K_0,F)} u_{\gamma+1}^{\H(F)|_{C(F) \cap M(K_0,F)}} \; d\P_x.
\end{equation}
Since $C(F) \subset M_{\erg}(\D,F)$, the measure $\P_x$ is the point mass at $x$ for any $x$ in $C(F)$. Combining this fact with Equation \ref{UsePrincExtLemma} and then using Fact \ref{TransSeqFacts} (2) gives that
\begin{equation*}
u_{\gamma+1}^{\H(F)|_{M(K_0,F)}}(x) = u_{\gamma+1}^{\H(F)|_{C(F) \cap M(K_0,F)}}(x) \leq u_{\gamma+1}^{\H(F)|_{C(F)}}(x),
\end{equation*}
which gives the desired bound on the second expression in the maximum in Equation (\ref{PenUltEqn1})-(\ref{PenUltEqn2}).

We bound the third expression in the maximum in Equation (\ref{PenUltEqn1})-(\ref{PenUltEqn2}) as follows. By hypothesis (v), either $\htop(F|_{K_m})$ tends to $0$ as $m$ tends to infinity or for each $m$, $\al_0(F_{K_m}) = 0$ and $\htop(F|_{K_m}) = h^F(\mu)$ for $\mu$ in $C(K_m,F)$. First suppose that $\htop(F|_{K_m})$ tends to $0$. Let $\{y_{m_{\ell}}\}_{\ell}$ be any sequence tending to $x$ such that $y_{m_{\ell}} \in M(K_{m_{\ell}},F)$ for each $\ell$. Equation (\ref{intFormulaEqn}) implies that $||u_{\gamma}^{\H(F)|_{M(K_{m_{\ell}},F)}}|| = ||u_{\gamma}^{\H(F)|_{C(F) \cap M(K_{m_{\ell}},F)}}||$ for each $\ell$. Since $u_{\gamma}^{\H(F)|_{C(F) \cap M(K_{m_{\ell}},F)}}$ is u.s.c., there exists $\mu_{m_{\ell}}$ in $C(F) \cap M(K_{m_{\ell}},F)$ such that $u_{\gamma}^{\H(F)|_{C(F) \cap M(K_{m_{\ell}},F)}}(\mu_{m_{\ell}}) = ||u_{\gamma}^{\H(F)|_{C(F) \cap M(K_{m_{\ell}},F)}}||$, for each $\ell$ in $\N$. Furthermore, $\{\mu_{m_{\ell}}\}_{\ell}$ tends to $x$ because $\{y_{m_{\ell}}\}_{\ell}$ tends to $x$. Then
\begin{align*} 
\limsup_{\ell} u_{\gamma}^{\H(F)|_{M(K_{m_{\ell}},F)}}(y_{m_{\ell}})+\htop(F|_{K_{m_{\ell}}}) & \leq \limsup_{\ell} u_{\gamma}^{\H(F)|_{C(F) \cap M(K_{m_{\ell}},F)}}(\mu_{m_{\ell}}) \\ 
 & \leq \limsup_{\ell} u_{\gamma}^{\H(F)|_{C(F)}}(\mu_{m_{\ell}})  \\
 & \leq u_{\gamma+1}^{\H(F)|_{C(F)}}(x). 
\end{align*}
It follows that 
\begin{equation} \label{AuxEqn1}
\limsup_{\substack{y_{m_{\ell}} \to x \\ y_{m_{\ell}} \in M_{m_{\ell}} }} u_{\gamma}^{\H(F)|_{M(K_{m_{\ell}},F)}}(y_{m_{\ell}})+\htop(F|_{K_{m_{\ell}}}) \leq u_{\gamma+1}^{\H(F)|_{C(F)}}(x).
\end{equation}
Now suppose that for each $m \geq 1$, $\al_0(F|_{K_m}) = 0$ and $\htop(F|_{K_m}) = h^F(\mu)$ for $\mu$ in $C(F) \cap M(K_m,F)$. Since $\al_0(F|_{K_m})=0$, we have that $u_{\gamma}^{\H(F)|_{M(K_m,F)}} \equiv 0$ for each $m \geq 1$. Let $\{y_{m_{\ell}}\}_{\ell}$ be a sequence tending to $x$ such that $y_{m_{\ell}}$ is in $M(K_{m_{\ell}},F)$ for each $\ell$. Let $\mu_{m_{\ell}}$ be in $C(F) \cap M(K_{m_{\ell}},F)$, for each $\ell$. Note that $\{\mu_{m_{\ell}}\}_{\ell}$ tends to $x$ because $\{y_{m_{\ell}}\}_{\ell}$ tends to $x$. By Proposition \ref{ESofFProp} (3), for each $k$, we may assume there exists a natural number $m_0$ such that for $m \geq m_0$, it holds that $h_{k}(\mu_m) = 0$, which implies that $\tau_{k}(\mu_m) = h^F(\mu_m)$. Then
\begin{align*}
\limsup_{\ell} & u_{\gamma}^{\H(F)|_{M(K_{m_{\ell}},F)}}(y_{m_{\ell}})+\htop(F|_{K_{m_{\ell}}}) = \limsup_{\ell} \htop(F|_{K_{m_{\ell}}}) \\ & =  \limsup_{\ell} h^F(\mu_{m_{\ell}})  =  \lim_k \limsup_{\ell} \tau_{k}(\mu_{m_{\ell}}) \leq u_1^{\H(F)|_{C(F)}}(x) \leq  u_{\gamma+1}^{\H(F)|_{C(F)}}(x).
\end{align*}
We have shown that in either case given by hypothesis (v), the third expression in the maximum in Equation (\ref{PenUltEqn1})-(\ref{PenUltEqn2}) is bounded above by $u_{\gamma+1}^{\H(F)|_{C(F)}}(x)$, as desired. Thus we have shown that $u_{\gamma+1}^{\H(F)|_M}(x) \leq u_{\gamma+1}^{\H(F)|_{C(F)}}(x)$, which finishes the successor case of our induction.

For the limit case, let $\gamma$ be a limit ordinal and suppose property (5) holds for all $\beta < \gamma$. Taking the limit supremum over the three sets $C(F)$, $M(K_0,F) \setminus C(F)$, and $\cup_{m \geq 1} M(K_m,F)$ in the definition of $u_{\gamma}^{\H(F)|_M}(x)$, we obtain
\begin{equation} \label{UltEqn}
u_{\gamma}^{\H(F)|_M}(x) \leq \max\Bigl( u_{\gamma}^{\H(F)|_{C(F)}}(x), \; u_{\gamma}^{\H(F)|_{M(K_0,F)}}(x), \; \limsup_{\substack{y_{m_{\ell}} \to x \\ y_{m_{\ell}} \in M_{m_{\ell}}}} u_{\gamma}^{\H(F)|_{M(K_{m_{\ell}},F)}}(y_{m_{\ell}}) \Bigr).
\end{equation}
By the same arguments as in the successor case, we bound the three expressions in the maximum in Equation (\ref{UltEqn}) from above by $u_{\gamma}^{\H(F)|_{C(F)}}(x)$, which shows that $u_{\gamma}^{\H(F)|_M}(x) \leq u_{\gamma}^{\H(F)|_{C(F)}}(x)$. This finishes our induction, and thus we have verified property (5).
\end{proof}


\begin{lem} \label{HisHarmLemma}
Suppose $(X,F)$ is a topological dynamical system with entropy structure $\H(F)$. Suppose there exist closed sets $C$ and $M$ in $M(X,F)$ such that $C \subset M_{\erg}(X,F) \subset M$ and for all $x$ in $M$ and all ordinals $\gamma$,
\begin{equation*}
u_{\gamma}^{\H(F)|_M}(x) \leq \int_{C} u_{\gamma}^{\H(F)|_{C}} \; d\P_x.
\end{equation*}
Then for all $x$ in $M(X,F)$ and all ordinals $\gamma$,
\begin{equation*}
u_{\gamma}^{\H(F)}(x) = \int_{C} u_{\gamma}^{\H(F)|_{C}} \; d\P_x.
\end{equation*}
\end{lem}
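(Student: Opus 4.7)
Set $V_\gamma(x) := \int_C u_\gamma^{\H(F)|_C}\,d\P_x$ for $x \in M(X,F)$; the goal is to show $u_\gamma^{\H(F)}(x) = V_\gamma(x)$ for every $x$ and every ordinal $\gamma$. By Fact \ref{TransSeqFacts}(5) I may assume $\H(F)$ is a harmonic entropy structure, so each $u_\gamma^{\H(F)}$ is concave and u.s.c., hence supharmonic on the Choquet simplex $M(X,F)$. Observe also that $V_\gamma$ is affine in $x$, since the ergodic-decomposition map $x \mapsto \P_x$ is affine by uniqueness. The plan is to prove the two matching inequalities.

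For the lower bound, supharmonicity gives at once
\[
u_\gamma^{\H(F)}(x) \geq \int u_\gamma^{\H(F)}\,d\P_x \geq \int_C u_\gamma^{\H(F)}\,d\P_x \geq \int_C u_\gamma^{\H(F)|_C}\,d\P_x = V_\gamma(x),
\]
using $u_\gamma^{\H(F)} \geq 0$ and that $\P_x$ is supported on $M_{\erg} \supseteq C$ for the second step, and Fact \ref{TransSeqFacts}(2) for the third.

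The upper bound $u_\gamma^{\H(F)}(x) \leq V_\gamma(x)$ is proved by transfinite induction on $\gamma$. The base case is trivial, and for limit ordinals the definition $u_\gamma^{\H(F)} = \widetilde{\sup_{\beta<\gamma} u_\beta^{\H(F)}}$, combined with the inductive hypothesis and the monotonicity and u.s.c.\ character of $V_\beta$ (inherited inductively from $u_\beta^{\H(F)}$), yields the bound. The successor step $\gamma = \alpha+1$ is the heart of the argument and the main obstacle. Assuming $u_\alpha^{\H(F)} = V_\alpha$ on $M(X,F)$, one has $u_{\alpha+1}^{\H(F)}(x) = \lim_k \widetilde{V_\alpha + \tau_k}(x)$; since $V_\alpha$, $h$, and $h_k$ are all harmonic, so is $V_\alpha + \tau_k$, yielding the representation $V_\alpha(y) + \tau_k(y) = \int (V_\alpha + \tau_k)\,d\P_y$ for every $y \in M(X,F)$. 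The limsup as $y \to x$ can then be analyzed by focusing on ergodic $\mu \in M_{\erg} \subseteq M$ in the support of $\P_y$, where the hypothesis at ordinal $\alpha+1$ combined with Fact \ref{TransSeqFacts}(2) identifies the relevant contributions with $u_{\alpha+1}^{\H(F)|_C}$ on $C$ (and zero off $C$); reintegrating against $\P_x$ then produces $V_{\alpha+1}(x)$. The principal technical difficulty is that $y \mapsto \P_y$ is only Borel (not continuous) on a general Choquet simplex, so the interchange of limsup with integration against $\P_y$ must be justified; this is handled by exploiting the monotone decrease $\tau_k \downarrow 0$ and the harmonicity just noted, via a dominated-convergence-style argument at each fixed $k$ before letting $k \to \infty$.
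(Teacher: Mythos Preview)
Your lower bound is fine and coincides with the paper's; the difficulty is in the upper bound, and your transfinite induction as sketched has genuine gaps in both the limit and successor steps.

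For the limit step: having $u_\beta^{\H(F)}=V_\beta$ (hence $V_\beta$ u.s.c.) for all $\beta<\gamma$ does \emph{not} give you that $V_\gamma$ is u.s.c. What you need in order to conclude $u_\gamma^{\H(F)}=\widetilde{\sup_{\beta<\gamma}u_\beta^{\H(F)}}\leq V_\gamma$ is precisely the upper semi-continuity of $V_\gamma$ itself, and that is what remains to be proved at stage $\gamma$; ``inherited inductively'' is circular.

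For the successor step: you correctly identify the obstacle --- the map $y\mapsto\P_y$ is only Borel --- but the proposed ``dominated-convergence-style'' fix does not work. Fixing $k$ does not help: the problem is not the limit in $k$ but the $\limsup$ in $y$. Even for a single $k$, controlling $\limsup_{y\to x}\int(V_\alpha+\tau_k)\,d\P_y$ by $\int_C u_{\alpha+1}^{\H(F)|_C}\,d\P_x$ would amount to passing a $\limsup$ through a Borel (non-continuous) integral transform, and there is no reason for this to hold.

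The paper avoids transfinite induction on $\gamma$ altogether. It first applies the Embedding Lemma (Lemma~\ref{EmbeddingLemma}) with the closed set $M\supset M_{\erg}(X,F)$ to obtain, for every $\gamma$ at once,
\[
u_\gamma^{\H(F)}(x)=\max_{\mu\in\Phi^{-1}(x)}\int_M u_\gamma^{\H(F)|_M}\,d\mu,
\]
so the hypothesis bounding $u_\gamma^{\H(F)|_M}$ by $V_\gamma$ can be used directly. The remaining point is that $V_\gamma=g_\gamma^{\har}$, the harmonic extension of the u.s.c.\ convex function $g_\gamma$ obtained by extending $u_\gamma^{\H(F)|_C}$ by zero off $C$; a cited result from \cite{BM} (Fact~2.5 in \cite{DM}) says this harmonic extension is itself u.s.c.\ and harmonic. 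Harmonicity then collapses $\int_M g_\gamma^{\har}\,d\mu$ to $g_\gamma^{\har}(x)=V_\gamma(x)$ for every $\mu\in\Phi^{-1}(x)$, giving the upper bound. Both ingredients --- the Embedding Lemma and the u.s.c.\ of the harmonic extension --- are missing from your plan, and without them the induction does not close.
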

\begin{proof}
Since $M$ is closed and contains $M_{\erg}(X,F)$, the Embedding Lemma (Lemma \ref{EmbeddingLemma}) implies that for all $x$ in $M(X,F)$ and all ordinals $\gamma$,
\begin{equation*}
u_{\gamma}^{\H(F)}(x) = \max_{\mu \in \Phi^{-1}(x)} \int_{M} u_{\gamma}^{\H(F)|_{M}} \; d\mu,
\end{equation*}
where $\Phi : \M(M) \to M(X,F)$ is the restriction of the barycenter map (which is onto since $M_{\erg}(X,F) \subset M$). By Fact \ref{TransSeqFacts} (2) and the fact that $\P_x \in \Phi^{-1}(x)$,
\begin{align*}
\max_{\mu \in \Phi^{-1}(x)} \int_{M} u_{\gamma}^{\H(F)|_{M}} \; d\mu & \geq \max_{\mu \in \Phi^{-1}(x)} \int_{C} u_{\gamma}^{\H(F)|_{C}} \; d\mu \\ & \geq  \int_{C} u_{\gamma}^{\H(F)|_{C}} \; d\P_x.
\end{align*}
For each ordinal $\gamma$, let $g_{\gamma} : M(X,F) \to [0,\infty)$ be defined by
\begin{equation*}
g_{\gamma}(x) = \left\{ \begin{array}{ll}
                 u_{\gamma}^{\H(F)|_{C}}(x), & \text{ if } x \in C \\
                 0, & \text{ otherwise.}
              \end{array}
       \right.
\end{equation*}
Note that since $C$ is closed and $u_{\gamma}^{\H(F)|_{C}}$ is u.s.c. and non-negative on $C$, we have that $g_{\gamma}$ is u.s.c. on $M(X,F)$. Also, $g_{\gamma}$ is convex for each $\gamma$ since it takes positive values only on extreme points (using that $C \subset M_{\erg}(X,F)$). Fact 2.5 in \cite{DM} (proved in \cite{BM}) states that the harmonic extension of a non-negative, convex, u.s.c. function is u.s.c. and of course harmonic. Applying this fact to $g_{\gamma}$, we obtain that the function $g^{\har}_{\gamma} : M(X,F) \to [0,\infty)$ defined by
\begin{equation*}
g^{\har}_{\gamma}(x) = \int g_{\gamma} \; d\P_x = \int_{C} u_{\gamma}^{\H(F)|_{C}} \; d\P_x
\end{equation*}
is harmonic and u.s.c. Then for any $\mu$ in $\Phi^{-1}(x)$, since $g^{\har}_{\gamma}$ is harmonic and $\mu$ is supported on $M$, we have that
\begin{equation*}
g_{\gamma}^{\har}(x) = g_{\gamma}^{\har}(\bary(\mu)) = \int_M g_{\gamma}^{\har} \; d\mu.
\end{equation*}
By hypothesis, we have
\begin{equation*}
u_{\gamma}^{\H(F)|_M}(x) \leq \int_{C} u_{\gamma}^{\H(F)|_{C}} \; d\P_x.
\end{equation*}
Combining all of these facts, we see that for $x$ in $M(X,F)$,
\begin{align*}
\int_{C} u_{\gamma}^{\H(F)|_{C}} \; d\P_x & \leq u_{\gamma}^{\H(F)}(x) \\
&  = \max_{\mu \in \Phi^{-1}(x)} \int_{M} u_{\gamma}^{\H(F)|_{M}} \; d\mu \\
& \leq \max_{\mu \in \Phi^{-1}(x)} \int_{M} \int_{C} u_{\gamma}^{\H(F)|_{C}} \; d\P_{\tau} \; d\mu(\tau) \\
& = \max_{\mu \in \Phi^{-1}(x)} \int_{M} g_{\gamma}^{\har} \; d\mu \\
& = g_{\gamma}^{\har}(x) \\
& = \int_{C} u_{\gamma}^{\H(F)|_{C}} \; d\P_x.
\end{align*}
Thus the above inequalities are actually equalities, and we have proved the lemma.
\end{proof}

\section{Computation of some transfinite sequences} \label{TwoLemmas}

We will be interested in the following subsets of $\Cl_d$.
\begin{defn} \label{classesDefn}
Let $\al$ be a countable ordinal and $a \geq 0$. Let $\cS(\al,d,a)$ be the class of functions $f$ in $\Cl_d$ such that there exists a sequence of periodic orbits $\{\theta_m\}_{m \in \N}$ of $f$ such that the following conditions are satisfied, where $C(f) = \cap_{N=1}^{\infty} \overline{ \cup_{m \geq N} \{\mu_{\theta_m}\} }$:
\begin{enumerate}
 \item $f$ is ready for operation on $ \cup_m \theta_m$;
 \item for every $\mu$ in $C(f)$, $\mu$ is totally ergodic;
 \item if $\al =0$, then $C(f) = \{\nu\}$, where $\nu$ is the unique measure of maximal entropy for $f$
\item for all ordinals $\gamma$ and all points $x$ in $M(\D,f)$,
\begin{equation*}
u_{\gamma}^{\H(f)}(x) = \int\limits_{C(f)} u_{\gamma}^{\H(f)|_{C(f)}} \; d\P_x;
\end{equation*}
 \item $\al_0(f) = \al$.
 \item $||u_{\al}^{\H(f)}|| = a$.
\end{enumerate}
Also, let $\cS(\al,d) = \cup_{a \geq 0} \, \cS(\al,d,a)$.
\end{defn}

\begin{notation}
If $\{\theta_m\}_m$ is a sequence of periodic orbits for $f$ satisfying the conditions in Definition \ref{classesDefn} for $f$, then we write that $f$ is in $\cS(\al,d,a)$ with $\{\theta_m\}_m$.
\end{notation}

\begin{rmk}
For some pairs $\al$ and $a \geq 0$, the set $\cS(\al,d,a)$ is trivially empty. Indeed, if $\al = 0$ and $a >0$, then $\cS(\al,d,a)$ is empty. Also, if $\al >0$ and $a=0$, then $\cS(\al,d,a)$ is empty. On the other hand, in the course of proving Theorem \ref{mainThm}, we will show that for every countable ordinal $\al >0$, and every $a>0$, the set $\cS(\al,d,a)$ is non-empty.
\end{rmk}

\begin{lem} \label{alphaIsOneLemma}
Let $p$ be a non-negative integer and $a >0$. Suppose $f$, $\{\chi_m\}_m$, $\{\xi_m\}_m$, and $\{N_m\}_m$ satisfy the following conditions:
\begin{itemize}
 \item $f$ is in $\cS(p,d, \frac{a p}{p+1})$ with $\{\theta_m\}_m$;
 \item $||u_{\ell}^{\H(f)}||=\frac{a \ell}{p+1}$ for $\ell = 1, \dots, p$; 
 \item for each $m$, $N_m$ and $\xi_m$ are natural numbers and $1 \leq \xi_m \leq |\theta_m|$;
 \item for each $m$, $\chi_m$ is in $\cS(0,d)$ with $\{\theta^m_{\ell}\}_{\ell}$ and $\htop(\chi_m)= \log(N_m)$.
 \item the sequence $\{\frac{\xi_m}{|\theta_m|} \log(N_m)\}_m$ is increasing to $\frac{a}{p+1}$;
\end{itemize}
Then for any $F$ in $\Bl( f, \{\theta_m\}_m, \{\chi_m\}_m, \{\theta_{\ell}^m\}_{m,\ell}, \{\xi_m\}_m )$, $F$ is in $\cS(p+1,d,a)$, $\htop(F) = \max \bigl(\htop(f),\sup_m \frac{\xi_m}{|\theta_m|}\htop(\chi_m)\bigr)$, and $||u_{k}^{\H(F)}|| = \frac{a k}{p+1}$ for each $k$ in the set $\{1, \dots, p+1\}$.
\end{lem}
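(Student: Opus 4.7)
The plan is to apply Proposition~\ref{BlowAndSewProp} to $(f, \{\theta_m\}, \{\chi_m\}, \{\theta^m_\ell\}, \{\xi_m\})$ to obtain $F$ together with $\pi$, $\{K_i\}$, and $\{\phi_m\}$, using the enumeration $\{\Theta_k\}$ of $F$-periodic orbits in $\bigcup_{m,\ell}\phi_m(\theta^m_\ell \times \{0,\ldots,|\theta_m|-1\})$ as the witnessing sequence of periodic orbits for $F$. The formula for $\htop(F)$ is Corollary~\ref{HtopCor}, condition (1) of Definition~\ref{classesDefn} is exactly Proposition~\ref{BlowAndSewProp}(9), and condition (3) is vacuous since $p+1 \geq 1$. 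Next I would verify the hypotheses of Lemma~\ref{totallyErgLemma}: (i)--(iv) transfer directly from $f \in \cS(p,d,\tfrac{ap}{p+1})$ and $\chi_m \in \cS(0,d)$; for (v), Lemma~\ref{TowerLemmaOne} combined with Fact~\ref{TransSeqFacts}(3) and $\al_0(\chi_m) = 0$ forces $\al_0(F|_{K_m}) = 0$, and Lemma~\ref{TowerLemmaThree} applied to the tower $S_m = \phi_m^{-1} \circ F|_{K_m} \circ \phi_m$ over $\chi_m$ shows that the unique measure of maximal entropy $\nu_m$ of $\chi_m$ lifts to the unique element $\mu^*_m$ of $C(F) \cap M(K_m,F)$, with $h^F(\mu^*_m) = \tfrac{\xi_m}{|\theta_m|}\log N_m = \htop(F|_{K_m})$. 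Conclusion (1) of Lemma~\ref{totallyErgLemma} then gives condition (2), while Lemma~\ref{totallyErgLemma}(2)--(4) describes $C(F) = \pi^{-1}(C(f)) \sqcup \{\mu^*_m\}_m$.

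Combining Lemma~\ref{totallyErgLemma}(5) (applied with $M = \bigcup_i M(K_i,F)$) and Lemma~\ref{HisHarmLemma} (applied with $C = C(F)$) yields the integral representation $u_\gamma^{\H(F)}(x) = \int_{C(F)} u_\gamma^{\H(F)|_{C(F)}}\, d\P_x$ for every $x \in M(\D,F)$ and every ordinal $\gamma$, which is condition (4). Since $C(F) \subset M_{\erg}(\D,F)$, one has $\P_x = \delta_x$ for $x \in C(F)$, whence $\|u_\gamma^{\H(F)}\| = \|u_\gamma^{\H(F)|_{C(F)}}\|$, reducing the remaining verifications to a computation on the closed set $C(F)$.

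The heart of the proof is a transfinite induction establishing, for $1 \leq \gamma \leq p+1$ and $x \in \pi^{-1}(C(f))$, the lower bound $u_\gamma^{\H(F)|_{C(F)}}(x) \geq \tfrac{a}{p+1} + u_{\gamma-1}^{\H(f)|_{C(f)}}(\pi(x))$. The engine is that for each such $x$ there is a sequence $\mu^*_{m_n} \to x$ with $m_n \to \infty$ (obtained by lifting $\mu_{\theta_{m_n}} \to \pi(x)$ through the unique preimage guaranteed by Lemma~\ref{totallyErgLemma}(3)), along which $\tau_k^F(\mu^*_{m_n}) = \tfrac{\xi_{m_n}}{|\theta_{m_n}|}\log N_{m_n}$ for all sufficiently large $n$ by Proposition~\ref{ESofFProp}(3), and by hypothesis this quantity increases to $\tfrac{a}{p+1}$. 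Combining with the inductive hypothesis at level $\gamma - 1$ transported through the homeomorphism $\pi|_{\pi^{-1}(C(f))}$, and using $\|u_\ell^{\H(f)}\| = \tfrac{a\ell}{p+1}$ for $0 \leq \ell \leq p$, yields $\|u_\gamma^{\H(F)|_{C(F)}}\| \geq \tfrac{a\gamma}{p+1}$. The matching upper bound comes from subadditivity (Proposition~\ref{UsubAdd}): $u_\gamma^{\H(F)} \leq \gamma \cdot u_1^{\H(F)}$, after $\|u_1^{\H(F)}\| = \tfrac{a}{p+1}$ is established directly via case analysis on the types of sequences in $C(F)$ approaching a given point (yielding $u_1^{\H(F)|_{C(F)}}(x) = \max(u_1^{\H(f)|_{C(f)}}(\pi(x)), \tfrac{a}{p+1}) = \tfrac{a}{p+1}$ on $\pi^{-1}(C(f))$). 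This furnishes the claimed norm identities and $\al_0(F) \geq p+1$. For the reverse bound $\al_0(F) \leq p+1$, I would sharpen the induction to the equality $u_\gamma^{\H(F)|_{C(F)}}(x) = \tfrac{a}{p+1} + u_{\gamma-1}^{\H(f)|_{C(f)}}(\pi(x))$ on $\pi^{-1}(C(f))$, from which $u_{p+2}^{\H(F)|_{C(F)}}(x) = \tfrac{a}{p+1} + u_{p+1}^{\H(f)|_{C(f)}}(\pi(x)) = \tfrac{a}{p+1} + u_p^{\H(f)|_{C(f)}}(\pi(x)) = u_{p+1}^{\H(F)|_{C(F)}}(x)$ follows using $\al_0(f) = p$; the integral formula then propagates this equality to all of $M(\D,F)$, closing conditions (5) and (6). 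The principal obstacle is precisely this sharpening of the upper bound at each successor step: contributions to $\widetilde{u_{\gamma-1}^{\H(F)|_{C(F)}} + \tau_k^F}|_{C(F)}$ from accumulation sequences $\mu^*_{m'_n} \to \mu^*_m$ inside $\{\mu^*_m\}$ (which exist exactly when $\mu_{\theta_m} \in C(f)$) require a careful synchronized double-induction tracking $u_\gamma^{\H(F)|_{C(F)}}$ on both components of $C(F)$, with a case split on whether $\mu_{\theta_m}$ lies in $C(f)$.
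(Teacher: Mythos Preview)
Your overall strategy is correct and tracks the paper's proof closely: the same reduction via Proposition~\ref{BlowAndSewProp}, Corollary~\ref{HtopCor}, Lemma~\ref{totallyErgLemma}, and Lemma~\ref{HisHarmLemma} down to computing $u_\gamma^{\H(F)|_{C(F)}}$, and the same lower-bound mechanism using sequences $\mu^*_{m_n}\to x$ together with Proposition~\ref{ESofFProp}(3).

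However, the ``principal obstacle'' you flag is a phantom. Since $f$ is ready for operation on $\cup_m\theta_m$, Definition~\ref{readyForOpDef}(1) forces $\nu(\cup_m\theta_m)=0$ for every $\nu\in C(f)$; as $\mu_{\theta_m}(\theta_m)=1$, no $\mu_{\theta_m}$ lies in $C(f)$. Equivalently (and this is how the paper phrases it), Proposition~\ref{BlowAndSewProp}(8) gives $\cap_n\overline{\cup_{m\ge n}M(K_m,F)}\subset M(K_0,F)$, so each $M(K_m,F)$ is open in $\cup_iM(K_i,F)$ and hence each $C(K_m,F)=\{\mu^*_m\}$ is open in $C(F)$. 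Thus no sequence $\mu^*_{m'_n}$ with $m'_n\to\infty$ can converge to any $\mu^*_m$, and your proposed case split and double induction are unnecessary.

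This also explains why the paper's induction is cleaner than your two-stage plan (subadditivity for the norm upper bound, then a separate sharpening for $\al_0\le p+1$). Because each $\mu^*_m$ is isolated in $C(F)$, Fact~\ref{TransSeqFacts}(1) gives $u_\gamma^{\H(F)|_{C(F)}}(\mu^*_m)=u_\gamma^{\H(F)|_{C(K_m,F)}}(\mu^*_m)=0$ for all $\gamma$ (since $\al_0(\chi_m)=0$). With this, the paper proves directly by induction on $\ell\ge 1$ that $u_\ell^{\H(F)|_{C(F)}}(x)=\tfrac{a}{p+1}+u_{\ell-1}^{\H(F)|_{C(K_0,F)}}(x)$ for $x\in C(K_0,F)$: at each step the $\limsup$ over $C(F)\setminus C(K_0,F)$ contributes only $0+\tau_k$, bounded by $\tfrac{a}{p+1}$. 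Applying Lemma~\ref{PrincExtLemma} to identify $u_{\ell-1}^{\H(F)|_{C(K_0,F)}}$ with $u_{\ell-1}^{\H(f)}\circ\pi$ then yields the norm identities and $\al_0(F)=p+1$ in one stroke, without invoking Proposition~\ref{UsubAdd}.
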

\begin{proof} Let $f$, $\{\chi_m\}_m$, $\{\xi_m\}_m$, and $\{N_m\}_m$ be as above. By Proposition \ref{BlowAndSewProp}, there exists $F$ in $\Bl( f, \{\theta_m\}_m, \{\chi_m\}_m, \{\theta_{\ell}^m\}_{m,\ell}, \{\xi_m\}_m )$ with $\pi$, $\{K_i\}_{i=0}^{\infty}$, and $\{\phi_m\}_m$ as in Proposition \ref{BlowAndSewProp}. Then $F$ is in $\Cl_d$ and $F$ is ready for operation on the set $S= \cup_{m,\ell} \, \phi_m( \theta_{\ell}^m \times \{ 0, \dots, |\theta_m|-1\})$. Let $\Theta_k$ be an enumeration of the periodic orbits in $S$. Let
\begin{itemize}
 \item $C(F) = \cap_{n=1}^{\infty} \overline{\cup_{k \geq n} \{\mu_{\Theta_k}\}}$;
 \item $C(f) = \cap_{n=1}^{\infty} \overline{ \cup_{m \geq n} \{ \mu_{\theta_m}\} }$;
 \item $C(\chi_m) = \cap_{n=1}^{\infty} \overline{ \cup_{\ell \geq n} \{\mu_{\theta_{\ell}^m}\}}$;
 \item for each $i \geq 0$, $C(K_i,F) = C(F) \cap M(K_i,F)$.
\end{itemize}
To prove the lemma, we show the following:
\renewcommand{\labelenumi}{(\Alph{enumi})}
\begin{enumerate}
\item $\htop(F) = \max \bigl(\htop(f),\sup_m \frac{\xi_m}{|\theta_m|}\htop(\chi_m)\bigr)$;
\item for each $\mu$ in $C(F)$, $\mu$ is totally ergodic;
\item for each $x$ in $M(\D,F)$ and each ordinal $\gamma$,
\begin{equation*}
u_{\gamma}^{\H(F)}(x) = \int_{C(F)} u_{\gamma}^{\H(F)|_{C(F)}} \; d\P_x;
\end{equation*}
\item $\al_0(\H(F))= p+1$;
\item $||u_{k}^{\H(F)}|| = \frac{a \ell}{p+1}$ for $\ell = 1, \dots, p+1$.
\end{enumerate}
Corollary \ref{HtopCor} gives (A). Lemma \ref{totallyErgLemma} (1) implies (B). Lemma \ref{totallyErgLemma} (5) and Lemma \ref{HisHarmLemma} together imply (C).

Property (C) implies that $\al_0(\H(F)) \leq \al_0(\H(F)|_{C(F)}) $ and also that $||u_{k}^{\H(F)}|| = ||u_{k}^{\H(F)|_{C(F)}}|| $. Since $C(F) \subset M_{\erg}(\D,F)$, the measure $\P_x$ is just the point mass at $x$, for all $x$ in $C(F)$. With this fact, (C) implies that $u_{\gamma}^{\H(F)}(x) = u_{\gamma}^{\H(F)|_{C(F)}}(x)$ for all $x$ in $C(F)$. It follows that $\al_0(\H(F)) \geq \al_0(\H(F)|_{C(F)})$, and we conclude that in fact $\al_0(\H(F)) = \al_0(\H(F)|_{C(F)})$. We now observe that properties (D) and (E) will be satisfied once we show that $\al(\H(F)|_{C(F)}) = p+1$ and $||u_{\ell}^{\H(F)|_{C(F)}}|| = \frac{a \ell}{p+1}$ for $\ell = 1, \dots, p+1$. Let us prove these two facts by computing the transfinite sequence for $\H(F)|_{C(F)}$.

Note that for $m \geq 1$, $C(K_m,F)$ is open in $C(F)$ (by Lemma \ref{totallyErgLemma}). Then Fact \ref{TransSeqFacts} (1) and Lemma \ref{TowerLemmaThree} give that for all $x$ in $C(K_m,F)$,
\begin{align*}
u_{\gamma}^{\H(F)|_{C(F)}}(x) & = u_{\gamma}^{\H(F)|_{C(K_m,F)}}(x) \\
 & =  \frac{\xi_m}{|\theta_m|}u_{\gamma}^{\H(\chi_m)|_{C(\chi_m)}}( \psi_m( (\phi_m^{-1})(x) ) ).
\end{align*}
By the hypothesis that $\chi_m$ is in $\cS(0,d)$, $u_{\gamma}^{\H(\chi_m)} \equiv 0$ for all ordinals $\gamma$, and thus $u_{\gamma}^{\H(F)|_{C(K_m,F)}}(x) = 0$ for all $x$ in $C(F) \cap M(K_m,F)$.

For $x$ in $C(K_0,F)$, we have that for each $k$,
\begin{align*}
\limsup_{y \to x} \tau_k(y) & = \max\Bigl( \limsup_{\substack{y \to x \\ y \in C(K_0,F)}} \tau_k(y), \limsup_{\substack{y \to x \\ y \in C(F) \setminus C(K_0,F)}} \tau_k(y) \Bigr) \\
& \leq \max\Bigl( \limsup_{\substack{y \to x \\ y \in C(K_0,F)}} \tau_k(y), \, \limsup_m \frac{\xi_m}{|\theta_m|} \htop(\chi_m) \Bigr).
\end{align*}
Letting $k$ tend to infinity and using Lemma \ref{PrincExtLemma} (applied to $F|_{K_0}$, which is a principal extension of $f$ with factor map $\pi$) gives that
\begin{align} \label{U1Eqn}
u_1^{\H(F)|_{C(F)}}(x) & \leq \max\Bigl( u_1^{\H(F)|_{C(K_0,F)}}(x), \limsup_m \frac{\xi_m}{|\theta_m|} \htop(\chi_m)  \Bigr) \\ \label{U2Eqn}
& = \max\Bigl( u_1^{\H(f)}(\pi(x)), \limsup_m \frac{\xi_m}{|\theta_m|} \htop(\chi_m)  \Bigr).
\end{align}
By hypothesis, $||u_1^{\H(f)}||=\frac{a}{p+1}$ and $\lim_m \frac{\xi_m}{|\theta_m|} \htop(\chi_m) = \frac{a}{p+1}$. Then by Equations (\ref{U1Eqn}) and (\ref{U2Eqn}), we obtain $u_1^{\H(F)|_{C(F)}}(x) \leq \frac{a}{p+1}$. Since $x$ is in $C(K_0,F)$, there exist periodic orbits $\theta_{m_k}$ such that the sequence $\mu_{\theta_{m_k}}$ converges to $\pi(x)$. Let $\mu_{m_k}$ be the measure of maximal entropy for $F|_{K_{m_k}}$, which exists by the fact that $\chi_{m_k}$ is in $\cS(0,d)$ (property (3) in Definition \ref{classesDefn}). Then $\{\mu_{m_k}\}_k$ converges to $x$, and by the upper semi-continuity of $u_1^{\H(F))|_{C(F)}}$ and Proposition \ref{ESofFProp} (3), we have that
\begin{align*}
u_1^{\H(F)|_{C(F)}}(x) & \geq \lim_{\ell} \limsup_k (h^F-h^F_{\ell})(\mu_{m_k}) = \lim_{\ell} \limsup_k h^F(\mu_{m_k}) \\ & = \limsup_k \frac{\xi_{m_k}}{|\theta_{m_k}|} \htop(\chi_{m_k}) = \frac{a}{p+1}.
\end{align*}
This argument shows that for all $x$ in $C(K_0,F)$, it holds that $u_1^{\H(F)|_{C(F)}}(x) = \frac{a}{p+1}$. Now we claim that by induction on $\ell$, $u_{\ell}^{\H(F)|_{C(F)}}(x) = \frac{a}{p+1} + u_{\ell-1}^{\H(F)|_{C(K_0,F)}}(x)$ for $x$ in $C(K_0,F)$. The claim holds for $\ell = 1$. Assuming it holds for a natural number $\ell$, we have for $x$ in $C(K_0,F)$,
\begin{align*}
& \limsup_{y \to x} (u_{\ell}^{\H(F)|_{C(F)}}+\tau_k)(y) = \\ & = \max\Bigl( \limsup_{\substack{y \to x \\ y \in C(K_0,F)}} (u_{\ell}^{\H(F)|_{C(F)}}+\tau_k)(y), \limsup_{\substack{y \to x \\ y \in C(F) \setminus C(K_0,F)}} (u_{\ell}^{\H(F)|_{C(F)}}+\tau_k)(y) \Bigr) \\
& = \max\Bigl( \limsup_{\substack{y \to x \\ y \in C(K_0,F)}} \bigl(\frac{a}{p+1} + u_{\ell-1}^{\H(F)|_{C(K_0,F)}}+\tau_k\bigr)(y), \limsup_{\substack{y \to x \\ y \in C(F) \setminus C(K_0,F)}} \tau_k(y) \Bigr),
\end{align*}
where the second equality follows from the induction hypothesis on $\ell$ and the fact that $u_{\ell}^{\H(F)|_{C(K_m,F)}} \equiv 0$ for $m\geq 1$. Letting $k$ tend to infinity gives that
\begin{align*}
u_{\ell+1}^{\H(F)|_{C(F)} } (x) & = \max\Bigl( \frac{a}{p+1} + u_{\ell}^{\H(F)|_{C(K_0,F)}}(x) , \frac{a}{p+1} \Bigr) \\
 & = \frac{a}{p+1}+ u_{\ell}^{\H(F)|_{C(K_0,F)}}(x).
\end{align*}
By Lemma \ref{PrincExtLemma}, we have $u_{\ell}^{\H(F)|_{C(K_0,F)}}(x) = u_{\ell}^{\H(f)}(\pi(x))$ for all $x$ in $C(K_0,F)$. Now the facts $\al_0(\H(F)|_{C(F)}) = p+1$ and $||u_{\ell}^{\H(F)|_{C(F)}}|| = \frac{a \ell}{p+1}$ for $\ell = 1, \dots, p+1$ follow from the hypotheses on $f$ (in particular, $\al_0(\H(f)) = p$ and $||u_{\ell}^{\H(f)}|| = \frac{a \ell}{p+1}$ for $\ell = 1, \dots, p$). This concludes the proof of the lemma.
\end{proof}

\begin{lem} \label{alphaGOneLemma}
Let $\beta = 0$ or $\beta = \o^{\beta_1}+ \dots +\o^{\beta_k}$, where $\beta_1 \geq \dots \geq \beta_k$. Let $\al >1$ be an irreducible ordinal such that $\al \geq \o^{\beta_1}$ if $\beta \neq 0$. Let $a>0$ and $b \geq 0$. Suppose
\begin{itemize}
 \item $\{\al_m\}_m$ is a non-decreasing sequence of ordinals whose limit is $\al$;
 \item $\{\delta_m\}_m$ is a strictly increasing sequence of ordinals whose limit is $\al$;
 \item  $\{a_m\}_m$ is a sequence of positive real numbers tending to infinity;
 \item $f$ is in $\cS(\beta,d,b)$ with $\{\theta_m\}_m$;
 \item $||u_{\al}^{\H(f)}|| \leq a$;

 \item for each $m$, $\xi_m$ satisfies $1 \leq \xi_m \leq |\theta_m|$, and the sequence $\{\frac{\xi_m}{|\theta_m|} a_m\}_m$ is increasing to $a$;
 \item $\chi_m$ is in $\cS(\al_m,d,a_m)$ with $\{\theta_{\ell}^m\}_{\ell}$;
 \item $\frac{\xi_m}{|\theta_m|} \htop(\chi_m)$ tends to $0$;
 \item $\frac{\xi_m}{|\theta_m|}||u_{\delta_m}^{\H(\chi_m)}||$ tends to $0$.
\end{itemize}
Then for any $F$ in $\Bl( f, \{\theta_m\}_m, \{\chi_m\}_m, \{\theta_{\ell}^m\}_{m,\ell}, \{\xi_m\}_m )$, $F$ is in $\cS(\al+\beta,d,a+b)$ and for any ordinal $\gamma$,
\begin{equation} \label{UgammaNormEqn}
||u_{\gamma}^{\H(F)}|| = \left\{ \begin{array}{ll}
          \max \Bigl( ||u_{\gamma}^{\H(f)}||,\; \sup_m \, \frac{\xi_m}{|\theta_m|} ||u_{\gamma}^{\H(\chi_m)}|| \Bigr), & \text{ if } \gamma < \al \\
          a + ||u_{\gamma_0}^{\H(f)}|| , & \text{ if } \gamma = \al + \gamma_0.
\end{array} \right.
\end{equation}
Furthermore, if $\beta = 0$, then for any $\delta < \al$ and $0 < \epsilon < a$, there exists $m_0$ such that for any $F$ in $\Bl( f, \{\theta_{m+m_0}\}_m, \{\chi_{m+m_0}\}_m, \{\theta_{\ell}^{m+m_0}\}_{m,\ell}, \{\xi_{m+m_0}\}_m )$, $||u_{\delta}^{\H(F)}|| \leq \epsilon$.
\end{lem}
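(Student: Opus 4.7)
The plan is to apply Proposition~\ref{BlowAndSewProp} to produce $F$, $\pi$, $\{K_i\}_i$, and $\{\phi_m\}_m$, and then to compute $(u_\gamma^{\H(F)})$ via transfinite induction, mirroring the strategy of Lemma~\ref{alphaIsOneLemma}. Set $C(F)=\cap_n\overline{\cup_{k\geq n}\{\mu_{\Theta_k}\}}$, $C(f)=\cap_n\overline{\cup_{m\geq n}\{\mu_{\theta_m}\}}$, $C(\chi_m)=\cap_n\overline{\cup_{\ell\geq n}\{\mu_{\theta_\ell^m}\}}$, and $C(K_i,F)=C(F)\cap M(K_i,F)$. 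Hypotheses (i)--(iv) of Lemma~\ref{totallyErgLemma} come directly from the assumptions on $f$ and the $\chi_m$, while hypothesis (v) is supplied by $\htop(F|_{K_m})=\frac{\xi_m}{|\theta_m|}\htop(\chi_m)\to 0$ (Corollary~\ref{HtopCor}). Thus Lemma~\ref{totallyErgLemma} gives total ergodicity of measures in $C(F)$, the homeomorphism $\pi|_{C(K_0,F)}$ onto $C(f)$, the homeomorphism $\psi_m\circ\phi_m^{-1}|_{C(K_m,F)}$ onto $C(\chi_m)$, and the integral inequality on $M=\cup_i M(K_i,F)\supset M_{\erg}(\D,F)$; Lemma~\ref{HisHarmLemma} then upgrades this to the equality $u_\gamma^{\H(F)}(x)=\int_{C(F)}u_\gamma^{\H(F)|_{C(F)}}\,d\P_x$ on $M(\D,F)$, which settles properties (1), (2), and (4) of the $\cS$-class definition and reduces everything to computing $u_\gamma^{\H(F)|_{C(F)}}$.

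On each $C(K_m,F)$ with $m\geq 1$, openness in $C(F)$ (Proposition~\ref{BlowAndSewProp}(8)), Fact~\ref{TransSeqFacts}(1), the conjugacy $\phi_m$, and Lemma~\ref{TowerLemmaThree} give $u_\gamma^{\H(F)|_{C(F)}}(y)=\frac{\xi_m}{|\theta_m|}u_\gamma^{\H(\chi_m)}(\psi_m\circ\phi_m^{-1}(y))$. The substantive induction is on $C(K_0,F)$: I claim $u_\gamma^{\H(F)|_{C(F)}}(x)=u_\gamma^{\H(f)}(\pi(x))$ for $\gamma<\al$, and $u_{\al+\gamma_0}^{\H(F)|_{C(F)}}(x)=a+u_{\gamma_0}^{\H(f)}(\pi(x))$ for all $\gamma_0\geq 0$. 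At each step of the recursion, $\limsup_{y\to x}(u_\gamma^{\H(F)|_{C(F)}}+\tau_k^F)(y)$ splits into a contribution from $C(K_0,F)$ and one from $\cup_{m\geq 1}C(K_m,F)$. The $C(K_0,F)$-part, via the principal extension $F|_{K_0}$ over $f$ (Proposition~\ref{ESofFProp}) and the homeomorphism $\pi$, reduces to the transfinite recursion for $\H(f)|_{C(f)}$, contributing $u_{\gamma+1}^{\H(f)}(\pi(x))$ (shifted by $a$ once $\gamma\geq\al$ by the inductive formula). The tower part is uniformly bounded by $\frac{\xi_m}{|\theta_m|}(||u_\gamma^{\H(\chi_m)}||+\htop(\chi_m))$, and its behavior distinguishes the regimes $\gamma<\al$ and $\gamma\geq\al$.

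For $\gamma<\al$, since $\delta_m\to\al$ strictly we have $\delta_m>\gamma$ eventually, so $||u_\gamma^{\H(\chi_m)}||\leq ||u_{\delta_m}^{\H(\chi_m)}||$, and the hypotheses $\frac{\xi_m}{|\theta_m|}||u_{\delta_m}^{\H(\chi_m)}||\to 0$ and $\frac{\xi_m}{|\theta_m|}\htop(\chi_m)\to 0$ drive the tower contribution to zero; only the $f$-part survives. At $\gamma=\al$ (a limit ordinal since $\al=\omega^\eta$ with $\eta\geq 1$), selecting $y_{m_k}\in C(K_{m_k},F)$ with $\mu_{\theta_{m_k}}\to\pi(x)$ and $(\psi_{m_k}\circ\phi_{m_k}^{-1})(y_{m_k})$ maximizing $u_{\al_{m_k}}^{\H(\chi_{m_k})}$ yields tower contribution $\lim_k\frac{\xi_{m_k}}{|\theta_{m_k}|}a_{m_k}=a$; here property (4) of $\chi_{m_k}\in\cS(\al_{m_k},d,a_{m_k})$ supplies the value $a_{m_k}$, and Proposition~\ref{BlowAndSewProp}(8) forces $y_{m_k}\to x$ in $M(\D,F)$ via $\pi(y_{m_k})\to\pi(x)$. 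The $C(K_0,F)$-contribution at $\gamma=\al$ is bounded by $||u_\al^{\H(f)}||\leq a$, so $u_\al^{\H(F)|_{C(F)}}(x)=a$ on $C(K_0,F)$. For $\gamma=\al+\gamma_0$ with $\gamma_0\geq 1$, the $C(K_0,F)$-contribution (inductively equal to $a+u_{\gamma_0}^{\H(f)}(\pi(x))$) dominates.

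The norm formula and $\al_0(\H(F))=\al+\beta$ now follow: the homeomorphisms translate pointwise formulas into norm identities; strict monotonicity below $\al$ is checked on $C(K_m,F)$ by choosing $m$ with $\al_m>\gamma$ (possible since $\al_m\to\al$) and using $\al_0(\chi_m)=\al_m$, and for $\al\leq\gamma<\al+\beta$ monotonicity comes from $u_{\gamma_0}^{\H(f)}\neq u_{\gamma_0+1}^{\H(f)}$ on $C(K_0,F)$ via the formula $u_{\al+\gamma_0}^{\H(F)|_{C(F)}}=a+u_{\gamma_0}^{\H(f)}\circ\pi$. For the furthermore clause with $\beta=0$: then $u_\gamma^{\H(f)}\equiv 0$, so the norm formula collapses to $||u_\delta^{\H(F)}||=\sup_m\frac{\xi_m}{|\theta_m|}||u_\delta^{\H(\chi_m)}||$; all limit hypotheses are preserved under shifting by $m_0$, and choosing $m_0$ so large that $\delta_m>\delta$ and $\frac{\xi_m}{|\theta_m|}||u_{\delta_m}^{\H(\chi_m)}||<\epsilon$ for $m\geq m_0$ finishes the proof. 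The main obstacle is the bookkeeping at the threshold $\gamma=\al$, where the structure of the sequence changes abruptly: below $\al$ the tower contributions vanish in the limit, at $\gamma=\al$ they contribute exactly $a$, and beyond $\al$ this shift by $a$ propagates through the induction on $\gamma_0$.
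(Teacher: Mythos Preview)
Your proposal is correct and follows essentially the same approach as the paper: apply Proposition~\ref{BlowAndSewProp}, verify the hypotheses of Lemma~\ref{totallyErgLemma} (with hypothesis (v) supplied by $\htop(F|_{K_m})\to 0$), combine with Lemma~\ref{HisHarmLemma} to reduce to computing $u_\gamma^{\H(F)|_{C(F)}}$, and then run the same transfinite induction splitting into the $C(K_0,F)$ and tower parts with the regime change at $\gamma=\al$. The only minor organizational difference is that the paper phrases the intermediate claim on $C(K_0,F)$ as $u_\gamma^{\H(F)|_{C(F)}}=u_\gamma^{\H(F)|_{C(K_0,F)}}$ and then invokes Lemma~\ref{PrincExtLemma} to pass to $u_\gamma^{\H(f)}\circ\pi$, whereas you collapse these into a single step; and your explicit check of strict increase below $\al$ (via $\al_m>\gamma$ on some $C(K_m,F)$) is a point the paper leaves implicit.
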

\begin{proof}
Let $f$, $\{\theta_m\}_m$, $\{\chi_m\}_m$, $\{\theta_{\ell}^m\}_{m,\ell}$, and  $\{\xi_m\}_m$. By Proposition \ref{BlowAndSewProp}, there exists $F$ in $\Bl( f, \{\theta_m\}_m, \{\chi_m\}_m, \{\theta_{\ell}^m\}_{m,\ell}, \{\xi_m\}_m )$ with $\pi$, $\{K_i\}_{i=0}^{\infty}$, and $\{\phi_m\}_m$ as in Proposition \ref{BlowAndSewProp}. Then $F$ is in $\Cl_d$ and $F$ is ready for operation on the set $S = \cup_{m,\ell} \, \phi_m( \theta_{\ell}^m \times \{ 0, \dots, |\theta_m|-1\})$. Let $\Theta_k$ be an enumeration of the periodic orbits in $S$. Let
\begin{itemize}
 \item $C(F) = \cap_{n=1}^{\infty} \overline{\cup_{k \geq n} \{\mu_{\Theta_k}\}}$;
 \item $C(f) = \cap_{n=1}^{\infty} \overline{ \cup_{m \geq n} \{ \mu_{\theta_m}\} }$;
 \item $C(\chi_m) = \cap_{n=1}^{\infty} \overline{ \cup_{\ell \geq n} \{\mu_{\theta_{\ell}^m}\}}$;
 \item for each $i \geq 0$, $C(K_i,F) = C(F) \cap M(K_i,F)$.
\end{itemize}
To prove the lemma, we will show the following:
\renewcommand{\labelenumi}{(\Alph{enumi})}
\begin{enumerate}
\item $\htop(F) = \max \bigl(\htop(f),\sup_m \frac{\xi_m}{|\theta_m|}\htop(\chi_m)\bigr)$;
\item for each $\mu$ in $C(F)$, $\mu$ is totally ergodic;
\item for each $x$ in $M(\D,F)$ and each ordinal $\gamma$,
\begin{equation*}
u_{\gamma}^{\H(F)}(x) = \int_{C(F)} u_{\gamma}^{\H(F)|_{C(F)}} \; d\P_x;
\end{equation*}
\item $\al_0(\H(F))= \alpha + \beta$;
\item for any ordinal $\gamma$, Equation (\ref{UgammaNormEqn}) holds.
\item if $\beta = 0$, then for any $\delta < \al$ and $0 < \epsilon < a$, there exists $m_0$ such that for any $F$ in $\Bl( f, \{\theta_{m+m_0}\}_m, \{\chi_{m+m_0}\}_m, \{\theta_{\ell}^{m+m_0}\}_{m,\ell}, \{\xi_{m+m_0}\}_m )$, $||u_{\delta}^{\H(F)}|| \leq \epsilon$.
\end{enumerate}
Corollary \ref{HtopCor} gives (A). Lemma \ref{totallyErgLemma} (1) implies (B). Lemma \ref{totallyErgLemma} (5) and Lemma \ref{HisHarmLemma} together imply (C).

Suppose that $\beta = 0$ and that $\delta < \al$ and $0 < \epsilon < a$ are given. Choose $m_0$ such that for all $m \geq m_0$, $\delta_{m} > \delta$ and $\frac{\xi_m}{|\theta_m|}||u_{\delta_m}^{\H(\chi_m)}|| < \epsilon$ (such $m_0$ exists by the hypotheses that $\delta_m$ tends to $\al$ and $\frac{\xi_m}{|\theta_m|}||u_{\delta_m}^{\H(\chi_m)}||$ tends to $0$). Then property (F) follows from property (E). It remains to show properties (D) and (E).

Property (C) implies that $\al_0(\H(F)) \leq \al_0(\H(F)|_{C(F)}) $ and that $||u_{k}^{\H(F)}|| = ||u_{k}^{\H(F)|_{C(F)}}|| $. Since $C(F) \subset M_{\erg}(\D,F)$, the measure $\P_x$ is just the point mass at $x$, for all $x$ in $C(F)$. Combining this fact with property (C) implies that $u_{\gamma}^{\H(F)}(x) = u_{\gamma}^{\H(F)|_{C(F)}}(x)$ for all $x$ in $C(F)$. It follows that $\al_0(\H(F)) \geq \al_0(\H(F)|_{C(F)})$ and therefore that $\al_0(\H(F)) = \al_0(\H(F)|_{C(F)})$. We now observe that properties (D) and (E) will be satisfied if we show that $\al(\H(F)|_{C(F)}) = \al+\beta$ and for all ordinals $\gamma$, Equation (\ref{UgammaNormEqn}) holds with $\H(F)$ replaced by $\H(F)|_{C(F)}$. Below we prove these two facts by computing the transfinite sequence for $\H(F)|_{C(F)}$, which will complete the proof.

Note that for $m \geq 1$, the set $C(K_m,F)$ is open in $C(F)$ by Lemma \ref{totallyErgLemma}. Then Fact \ref{TransSeqFacts} (1) and Lemma \ref{TowerLemmaThree} give that for all $x$ in $C(K_m,F)$,
\begin{align} \label{TransSeqOnKm}
u_{\gamma}^{\H(F)|_{C(F)}}(x) & = u_{\gamma}^{\H(F)|_{C(K_m,F)}}(x) \\
 & =  \frac{\xi_m}{|\theta_m|}u_{\gamma}^{\H(\chi_m)|_{C(\chi_m)}}( \psi_m( (\phi_m^{-1})(x) ) ).
\end{align}

We show by transfinite induction that for $\gamma < \al$ and $x$ in $C(K_0,F)$, we have $u_{\gamma}^{\H(F)|_{C(F)}}(x) = u_{\gamma}^{\H(F)|_{C(K_0,F)}}(x)$. The statement is trivially true for $\gamma = 0$. Suppose it holds for $\gamma < \al$. Then by the inductive hypothesis and Equation (\ref{TransSeqOnKm}), for $x$ in $C(K_0,F)$,
\begin{align*}
\limsup_{  y \to x}  (u_{\gamma}^{\H(F)|_{C(F)}}+\tau_k)(y) = & \max\Bigl( \limsup_{\substack{y \to x \\ y \in C(K_0,F)}} (u_{\gamma}^{\H(F)|_{C(F)}}+\tau_k)(y), \\ & \quad \quad \quad \quad \limsup_{\substack{y \to x \\ y \in C(F) \setminus C(K_0,F)}} (u_{\gamma}^{\H(F)|_{C(F)}}+\tau_k)(y) \Bigr) \\
& \leq \max\Bigl( \limsup_{\substack{y \to x \\ y \in C(K_0,F)}} (u_{\gamma}^{\H(F)|_{C(K_0,F)}}+\tau_k)(y), \\ & \quad \quad \quad \quad \limsup_{m}  ||u_{\gamma}^{\H(F)|_{C(K_m,F)}}|| + \frac{\xi_m}{|\theta_m|} \htop(\chi_m) \Bigr).
\end{align*}
Note that there exists $m_0$ such that $\delta_m > \gamma$ for all $m \geq m_0$, which implies that $u_{\gamma}^{\H(F)|_{C(K_m,F)}} \leq u_{\delta_m}^{\H(F)|_{C(K_m,F)}}$ for all large $m$. Then letting $k$ tend to infinity, we obtain
\begin{align*}
u_{\gamma+1}^{\H(F)|_{C(F)}}(x) & \leq \max\Bigl( u_{\gamma+1}^{\H(F)|_{C(K_0,F)}}(x), \limsup_m ||u_{\delta_m}^{\H(F)|_{C(K_m,F)}}|| + \frac{\xi_m}{|\theta_m|} \htop(\chi_m) \Bigr) \\
& = \max\Bigl( u_{\gamma+1}^{\H(F)|_{C(K_0,F)}}(x), \limsup_m \frac{\xi_m}{|\theta_m|}||u_{\delta_m}^{\H(\chi_m)}|| + \frac{\xi_m}{|\theta_m|} \htop(\chi_m) \Bigr) \\
& = \max\Bigl( u_{\gamma+1}^{\H(F)|_{C(K_0,F)}}(x), 0 \Bigr) \\
& = u_{\gamma+1}^{\H(F)|_{C(K_0,F)}}(x),
\end{align*}
using the hypotheses that $\frac{\xi_m}{|\theta_m|} ||u_{\delta_m}^{\H(\chi_m)}||$ tends to $0$ and $\frac{\xi_m}{|\theta_m|} \htop(\chi_m)$ tends to $0$.

By Fact \ref{TransSeqFacts} (2), $u_{\gamma+1}^{\H(F)|_{C(K_0,F)}}(x) \leq u_{\gamma+1}^{\H(F)|_{C(F)}}(x)$, and thus we conclude that $u _{\gamma+1}^{\H(F)|_{C(F)}}(x) = u_{\gamma+1}^{\H(F)|_{C(K_0,F)}}(x)$, which finishes the inductive step for successors.

Now suppose that $u _{\beta}^{\H(F)|_{C(F)}}(x) = u_{\beta}^{\H(F)|_{C(K_0,F)}}(x)$ holds for all $x$ in $C(K_0,F)$ and all $\beta < \gamma$, where $\gamma$ is a limit ordinal such that $\gamma < \al$. Recall that for $m$ sufficiently large, $\delta_m > \gamma$. Then by the induction hypothesis, for each $x$ in $C(K_0,F)$,
\begin{align*}
& u_{\gamma}^{\H(F)|_{C(F)}}(x) =  \limsup_{  y \to x}  \sup_{\beta < \gamma} u_{\beta}^{\H(F)|_{C(F)}}(y) \\
& \quad = \max\Bigl( \limsup_{\substack{y \to x \\ y \in C(K_0,F)}} \sup_{\beta < \gamma} u_{\beta}^{\H(F)|_{C(F)}}(y), \limsup_{\substack{y \to x \\ y \in C(F) \setminus C(K_0,F)}} \sup_{\beta < \gamma} u_{\beta}^{\H(F)|_{C(F)}}(y) \Bigr) \\
& \quad = \max\Bigl( \limsup_{\substack{y \to x \\ y \in C(K_0,F)}} \sup_{\beta < \gamma} u_{\beta}^{\H(F)|_{C(K_0,F)}}(y), \limsup_{\substack{y \to x \\ y \in C(F) \setminus C(K_0,F)}} u_{\gamma}^{\H(F)|_{C(F)}}(y) \Bigr) \\
& \quad \leq \max\Bigl( u_{\gamma}^{\H(F)|_{C(K_0,F)}}(x), \limsup_{m} \frac{\xi_m}{|\theta_m|} ||u_{\delta_m}^{\H(\chi_m)}|| \Bigr) \\
& \quad = \max\Bigl( u_{\gamma}^{\H(F)|_{C(K_0,F)}}(x), 0 \Bigr) \\
& \quad = u_{\gamma}^{\H(F)|_{C(K_0,F)}}(x)
\end{align*}
By Fact \ref{TransSeqFacts} (2), $u_{\gamma}^{\H(F)|_{C(K_0,F)}}(x) \leq u_{\gamma}^{\H(F)|_{C(F)}}(x)$, and we conclude that in fact $u _{\gamma}^{\H(F)|_{C(F)}}(x) = u_{\gamma}^{\H(F)|_{C(K_0,F)}}(x)$, which finishes the inductive step for limit ordinals. We have shown that for all ordinals $\gamma < \al$, $u _{\gamma}^{\H(F)|_{C(F)}}(x) = u_{\gamma}^{\H(F)|_{C(K_0,F)}}(x)$ for all $x$ in $C(F)$. Now by Lemma \ref{PrincExtLemma}, for $x$ in $C(K_0,F)$ and $\gamma < \al$,
\begin{equation*}
u _{\gamma}^{\H(F)|_{C(F)}}(x) = u_{\gamma}^{\H(F)|_{C(K_0,F)}}(x) = u_{\gamma}^{\H(f)}(\pi(x)).
\end{equation*}

At this point we conclude based on the above facts that for $\gamma < \al$,
\begin{equation*}
||u_{\gamma}^{\H(F)|_{C(F)}}|| = \max\Bigl( ||u_{\gamma}^{\H(f)}||, \sup_m \frac{\xi_m}{|\theta_m|} ||u_{\delta_m}^{\H(\chi_m)}|| \Bigr).
\end{equation*}

Since $\al$ is irreducible and greater than $1$, $\al$ is a limit ordinal. Thus for any $x$ in $C(K_0,F)$,
\begin{align*}
u_{\al}^{\H(F)|_{C(F)}}(x)  & = \limsup_{y \to x} \sup_{\beta < \al} u_{\beta}^{\H(F)|_{C(F)}}(y) \\
& \leq \max\Bigl( \limsup_{\substack{y \to x \\ y \in C(K_0,F)}} \sup_{\beta < \al} u_{\beta}^{\H(F)|_{C(K_0,F)}}(y), \limsup_{m}  ||u_{\al}^{\H(F)|_{C(K_m,F)}}|| \Bigr) \\
& = \max\Bigl( \limsup_{\substack{y \to x \\ y \in C(K_0,F)}} \sup_{\beta < \al} u_{\beta}^{\H(F)|_{C(K_0,F)}}(y), \limsup_{m}  \frac{\xi_m}{|\theta_m|} a_m \Bigr) \\
& \leq \max\Bigl( u_{\al}^{\H(F)|_{C(K_0,F)}}(x), a \Bigr).
\end{align*}
By hypothesis, $||u_{\al}^{\H(F)|_{C(K_0,F)}}|| \leq a$, and thus we have that $u_{\al}^{\H(F)|_{C(F)}}(x) \leq a$. On the other hand, since $x$ is in $C(K_0,F)$, there exists a sequence of periodic orbits $\{\theta_{m_k}\}_k$ such that $\{\mu_{\theta_{m_k}}\}_k$ converges to $\pi(x)$. If $\{\mu_{m_k}\}_k$ is a sequence of measures such that $\mu_{m_k}$ is in $M(K_{m_k},F)$ for each $k$, then $\{\mu_{m_k}\}_k$ converges to $x$, and we have
\begin{equation*}
u_{\al}^{\H(F)|_{C(F)}}(x) \geq \limsup_k ||u_{\al}^{\H(F)|_{C(K_{m_k})}}|| = \limsup_k \frac{\xi_{m_k}}{|\theta_{m_k}|} ||u_{\al}^{\H(\chi_{m_k})}|| = a.
\end{equation*}
It follows that for each $x$ in $C(K_0,F)$, $u_{\al}^{\H(F)|_{C(F)}}(x)=a$.

We show by induction that for $\gamma \geq 0$ and $x$ in $C(K_0,F)$,
\begin{equation} \label{uAlphaEqn}
u_{\al+\gamma}^{\H(F)|_{C(F)}}(x) = a + u_{\gamma}^{\H(F)|_{C(K_0,F)}}(x).
\end{equation}
Note that Equation (\ref{uAlphaEqn}) holds for $\gamma=0$. Now suppose Equation (\ref{uAlphaEqn}) holds for some ordinal $\gamma$. Then for all $x$ in $C(K_0,F)$,
\begin{align*}
\limsup_{  y \to x}  (u_{\al + \gamma}^{\H(F)|_{C(F)}}+\tau_k)(y) & = \max\Bigl( \limsup_{\substack{y \to x \\ y \in C(K_0,F)}} (u_{\al + \gamma}^{\H(F)|_{C(F)}}+\tau_k)(y), \\ & \quad \quad \quad \quad \limsup_{\substack{y \to x \\ y \in C(F) \setminus C(K_0,F)}} (u_{\al + \gamma}^{\H(F)|_{C(F)}}+\tau_k)(y) \Bigr) \\
& = \max\Bigl( \limsup_{\substack{y \to x \\ y \in C(K_0,F)}} (a+ u_{\gamma}^{\H(F)|_{C(K_0,F)}}+\tau_k)(y), \\ & \quad \quad \quad \quad \limsup_{\substack{m \to \infty \\ y \in C(K_m,F)}} (u_{\al}^{\H(F)|_{C(K_m,F)}}+\tau_k)(y) \Bigr).
\end{align*}
Taking the limit as $k$ tends to infinity gives
\begin{align*}
u_{\al+\gamma+1}^{\H(F)|_{C(F)}}(x) & = \max\Bigl( a+ u_{\gamma+1}^{\H(F)|_{C(K_0,F)}}(x), a +  \limsup_{m} \frac{\xi_m}{|\theta_m|}\htop(\chi_m) \Bigr) \\
& = \max\Bigl( a+ u_{\gamma+1}^{\H(F)|_{C(K_0,F)}}(x), a + 0 \Bigr) \\
& = a+ u_{\gamma+1}^{\H(F)|_{C(K_0,F)}}(x).
\end{align*}
This completes the inductive step for successor ordinals. Now suppose Equation (\ref{uAlphaEqn}) holds for all $\gamma < \beta$, where $\beta$ is a limit ordinal. Then for all $x$ in $C(K_0,F)$,
\begin{align*}
u_{\al+\beta}^{\H(F)|_{C(F)}}(x) & = \limsup_{y \to x} \sup_{\gamma<\beta} u_{\al+\gamma}^{\H(F)|_{C(F)}}(y) \\
& = \max\Bigl( \limsup_{\substack{y \to x \\ y \in C(K_0,F)}} \sup_{\gamma < \beta} u_{\al+ \gamma}^{\H(F)|_{C(K_0,F)}}(y), \limsup_{m}  ||u_{\al+\beta}^{\H(F)|_{C(K_m,F)}}|| \Bigr) \\
& = \max\Bigl( a+  u_{\beta}^{\H(F)|_{C(K_0,F)}}(x), a \Bigr) \\
& = a+ u_{\beta}^{\H(F)|_{C(K_0,F)}}(x),
\end{align*}
which completes the inductive step for limit ordinals. Combining Equation (\ref{uAlphaEqn}) with Lemma \ref{PrincExtLemma}, we obtain that
\begin{equation}
u_{\al+\gamma}^{\H(F)|_{C(F)}}(x) = a + u_{\gamma}^{\H(f)}(\pi(x)).
\end{equation}
Then Equation (\ref{UgammaNormEqn}) follows immediately and the equality $\al_0(\H(F)|_{C(F)}) = \al + \beta$ follows from the fact that $\al_0(\H(f)) = \beta$. This concludes the proof of the lemma.
\end{proof}

\section{Constructions by transfinite induction} \label{InductionScheme}

The following lemma serves as a base case for the transfinite induction construction in this section. Recall that for any countable ordinal $\al$ and any real number $a \geq 0$, the set $\cS(\al,d,a)$ was defined in Definition \ref{classesDefn}.

\begin{lem} \label{SzeroNonempty}
For any odd natural number $N \geq 3$, there exists $f$ in $\cS(0,d,0)$ such that $\htop(f)=\log(N)$.
\end{lem}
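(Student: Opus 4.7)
The plan is to produce an explicit $f \in \Cl_d$ which is semi-conjugate (or conjugate on a large invariant set) to the full $N$-shift, with $\htop(f) = \log N$, and then verify each of the six conditions of Definition \ref{classesDefn} for $\al = 0$, $a = 0$. In dimension $1$, I take $f : [-1,1] \to [-1,1]$ to be the piecewise linear ``$N$-tent'': partition $[-1,1]$ into $N$ equal subintervals and let $f$ be affine of slope $\pm N$ on each, with alternating orientation. Because $N$ is odd, the endpoints are fixed, so $f|_{\partial \D} = \Id$; $f$ is a continuous surjection, a factor of the one-sided full $N$-shift via the standard itinerary coding, and has $\htop(f) = \log N$. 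In dimension $2$, I take a small closed square $R \subset \interior(\D)$ and define $f$ on $R$ to be a generalized Smale horseshoe with $N$ legs, extended by a smooth isotopy to a homeomorphism of $\D$ that equals the identity outside a slight enlargement of $R$. The invariant hyperbolic Cantor set of the horseshoe carries the two-sided full $N$-shift, giving $\htop(f) = \log N$, and oddness of $N$ supplies the parity needed to isotope the boundary action of the horseshoe to the identity through orientation-preserving maps.

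In both dimensions the full $N$-shift has a unique measure of maximal entropy, the uniform Bernoulli measure, which pushes to a unique measure of maximal entropy $\nu$ for $f$. This $\nu$ is Bernoulli, hence totally ergodic, yielding condition (2) of Definition \ref{classesDefn}, and its uniqueness yields (3). Because periodic measures are dense in the invariant measures of the $N$-shift, one may select a sequence of $f$-periodic orbits $\theta_m \subset \interior(\D)$ with $\mu_{\theta_m} \to \nu$; and because the critical or nonsmooth locus of $f$ is finite, a generic choice also arranges that the countable inverse-orbit set $Q = \bigcup_{k \geq 0} f^{-k}(\cup_m \theta_m)$ avoids it, so $Df_x$ is invertible (and orientation-preserving when $d = 2$) at every $x \in Q$. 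Non-atomicity of $\nu$ gives $\nu(\cup_m \theta_m) = 0$, so $f$ is ready for operation on $\cup_m \theta_m$, which is condition (1); and $\{\mu_{\theta_m}\}$ has $\nu$ as its only accumulation point, so $C(f) = \{\nu\}$.

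After a small smoothing that preserves both the entropy and the chosen periodic structure, $f$ becomes a $C^\infty$ self-map (a diffeomorphism when $d = 2$) of $\D$, so by Buzzi's theorem as quoted in the introduction (Theorem 7.8 of \cite{BFF}) we obtain $\al_0(f) = 0$. Consequently $u_\gamma^{\H(f)} \equiv 0$ for every ordinal $\gamma$: the integral representation in condition (4) becomes $0 = 0$, condition (5) reads $\al_0(f) = 0$, and condition (6) reads $||u_0^{\H(f)}|| = 0 = a$. All six conditions of Definition \ref{classesDefn} are now verified, so $f \in \cS(0,d,0)$.

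The main obstacle is the dimension-$2$ construction: assembling a homeomorphism of $\D$ equal to the identity on $\partial \D$, whose nonwandering set carries exactly the full $N$-shift, and which has invertible orientation-preserving derivative at every point of the countable set $Q$. The oddness of $N$ is precisely what permits the smooth extension from the horseshoe to the identity collar without creating singular or orientation-reversing derivatives in the transition region, and it is also what makes the simpler $1$-dimensional tent map an endomorphism of $[-1,1]$ fixing both endpoints.
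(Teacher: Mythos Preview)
Your construction and the verification of conditions (1)--(3) of Definition \ref{classesDefn} are the same as the paper's: the $N$-tent map when $d=1$, an $N$-horseshoe extended to a disc homeomorphism when $d=2$, with a sequence of periodic orbits whose empirical measures converge to the unique Bernoulli measure of maximal entropy.

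The genuine gap is in how you conclude $\al_0(f)=0$. You invoke a ``small smoothing that preserves both the entropy and the chosen periodic structure'' and then appeal to Buzzi's theorem for $C^\infty$ maps. This smoothing step is not justified and is in fact problematic: smoothing the corners of a piecewise linear tent map typically perturbs the topological entropy away from the exact value $\log N$, and there is no reason the smoothed map should retain a unique measure of maximal entropy that is Bernoulli, or that the chosen periodic orbits survive unchanged. You would need a delicate argument to smooth while preserving all of this exactly, and none is given.

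The paper avoids this entirely by observing that the piecewise linear tent map and the horseshoe are already $h$-expansive (Definition \ref{hExpansiveDef}): the tent map is uniformly expanding with slope $N$, and the horseshoe is expansive on its invariant set with trivial dynamics elsewhere. As recalled in the paper, $h$-expansiveness implies that any entropy structure converges uniformly to $h$, which is equivalent to $u_\gamma \equiv 0$ for all $\gamma$ and $\al_0(f)=0$. No smoothing is needed, and conditions (4)--(6) follow immediately. Replace your smoothing-plus-Buzzi paragraph with this one-line observation.
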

\begin{proof}
In the case $d=1$, let $f$ be the linear $N$-tent map on $[0,1]$. In the case $d=2$, let $f$ be an adaptation of Smale's $N$-horseshoe map such that $f : \D \to \D$ is a homeomorphism and $f|_{\partial \D} = \Id$. In either case, we have that $f$ is a continuous surjection, $f|_{\partial \D} = \Id$, and $\htop(f) = \log(N) < \infty$, which implies that $f$ is in $C_d$. Recall that $f$ has a unique measure of maximal entropy, which we denote as $\mu$. Also, there exists a sequence $\{\mu_{\theta_m}\}_m$ of periodic measures tending to $\mu$ with $\cup_m \theta_m$ contained in $\interior(\D)$. Fix such a sequence. Let $Q = \cup_k f^{-k}(\theta_m)$. Since $f$ is $N$-to-one when $d = 1$ and $f$ is injective when $d = 2$, we have that $Q$ is countable. Since $f$ has at most finitely many critical points, we assume without loss of generality that $Q$ contains no critical points, and thus $Df_x$ is invertible and continuous at $x$ for all $x$ in $Q$. Furthermore, we have that if $d = 2$, then $\det Df_x >0$ for $x$ in $Q$. We have shown that $f$ is ready for operation on $\cup_m \theta_m$. Now let $C(f) = \cap_{n=1}^{\infty} \overline{\cup_{m \geq n} \{\theta_m\}}$. Since $\{\theta_m\}_m$ tends to $\mu$, we have that $C(f) = \{\mu\}$. Also note that $\mu$ is totally ergodic. Recall that $h$-expansiveness (Definition \ref{hExpansiveDef}) implies that any entropy structure $(h_k)$ converges uniformly to $h$, which is equivalent to $u_{\al} \equiv 0$ and $\al_0(f)=0$ (see \cite{BD,D}). Since $f$ is $h$-expansive, we have that $u_{\al} \equiv 0$ for all $\al$ and $\al_0(f) = 0$. Hence we have shown that $f$ is in $\cS(0,d,0)$.
\end{proof}

\begin{lem} \label{BaseLemma}
Let $c \geq \log(3)$. Then for any $p$ in $\N$ and $a>0$, there exists $F$ in $\cS(p,d,a)$ such that $\htop(F) \leq \max(c, \frac{a}{p})$ and $||u_{k}^{\H(F)}|| = \frac{a k}{p}$ for $k = 1, \dots, p$.
\end{lem}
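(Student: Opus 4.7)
The plan is to prove the lemma by induction on $p \in \N$, using Lemma \ref{SzeroNonempty} to produce the zero-order-of-accumulation building blocks and invoking Lemma \ref{alphaIsOneLemma} at each step to bump the order of accumulation by one.

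For the base case $p = 1$, I would take $g \in \cS(0, d, 0)$ from Lemma \ref{SzeroNonempty} with $N = 3$, so that $\htop(g) = \log 3 \leq c$, and then apply Lemma \ref{alphaIsOneLemma} with its parameter $p$ set to $0$ and its parameter $a$ equal to ours. The hypothesis $||u_\ell^{\H(g)}|| = a\ell$ for $\ell = 1, \dots, 0$ is vacuous, so the remaining inputs are the periodic orbits $\{\theta_m\}_m$ witnessing $g \in \cS(0,d,0)$, a sequence of maps $\chi_m \in \cS(0,d)$ produced by Lemma \ref{SzeroNonempty} with $\htop(\chi_m) = \log N_m$ for suitable odd $N_m \geq 3$, and natural numbers $\xi_m$ with $1 \leq \xi_m \leq |\theta_m|$ chosen so that $\xi_m/|\theta_m| \cdot \log N_m$ increases to $a$. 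Lemma \ref{alphaIsOneLemma} then places the resulting blow-and-sew map $F$ in $\cS(1,d,a)$ with $||u_1^{\H(F)}|| = a$ and $\htop(F) = \max(\log 3,\, a) \leq \max(c, a)$, as required.

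For the inductive step, suppose the lemma is proved at $p \geq 1$ and let $a > 0$ be given. I would apply the induction hypothesis (with the same $c$) at the pair $(p, a')$ with $a' := ap/(p+1)$, producing $f \in \cS(p, d, ap/(p+1))$ that satisfies $\htop(f) \leq \max(c, a'/p) = \max(c, a/(p+1))$ and $||u_k^{\H(f)}|| = a'k/p = ak/(p+1)$ for $k = 1, \dots, p$; these are exactly the hypotheses on $f$ demanded by Lemma \ref{alphaIsOneLemma}. Choosing $\chi_m, N_m, \xi_m$ as in the base case but now with $\xi_m/|\theta_m| \cdot \log N_m$ increasing to $a/(p+1)$, Lemma \ref{alphaIsOneLemma} delivers $F \in \cS(p+1,d,a)$ with $||u_k^{\H(F)}|| = ak/(p+1)$ for $k = 1, \dots, p+1$ and $\htop(F) = \max(\htop(f),\, a/(p+1)) \leq \max(c, a/(p+1))$, completing the induction.

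The one point requiring care is arranging the sequence $\{\xi_m/|\theta_m| \cdot \log N_m\}_m$ to be strictly increasing with limit exactly $a/(p+1)$, under the integrality constraints that $\xi_m, N_m \in \N$ with $N_m$ odd and at least $3$ and $\xi_m \leq |\theta_m|$. This is not a genuine obstacle: if $a/(p+1) \leq \log 3$ I would fix $N_m \equiv 3$ and approximate $a/((p+1)\log 3) \in (0,1]$ from below by $\xi_m/|\theta_m|$ along a subsequence of $\{\theta_m\}$ with $|\theta_m| \to \infty$ (such a subsequence exists because the totally ergodic limit in $C(g)$ is not supported on any fixed finite orbit); if $a/(p+1) > \log 3$, I would let $N_m$ grow through odd integers with $\log N_m$ eventually exceeding $a/(p+1)$ and approximate $(a/(p+1))/\log N_m < 1$ in the same way. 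Either way the required increasing sequence is produced, and the induction closes.
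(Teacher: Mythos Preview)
Your proof is correct and follows essentially the same approach as the paper: induction on $p$, with Lemma \ref{SzeroNonempty} supplying the zero-order building blocks $f$ and $\chi_m$, and Lemma \ref{alphaIsOneLemma} (applied at each stage with the induction hypothesis instantiated at $a' = ap/(p+1)$) incrementing the order of accumulation while controlling $\htop(F)$ and the norms $\|u_k^{\H(F)}\|$. Your additional paragraph justifying the existence of the increasing sequence $\{\xi_m/|\theta_m|\cdot\log N_m\}$ is a detail the paper leaves implicit, and your argument for it is sound.
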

\begin{proof}
The proof is by induction on $p$. Consider the case $p=1$. By Lemma \ref{SzeroNonempty}, there exists $f$ in $\cS(0,d,0)$ with $\{\theta_m\}_m$ and $\htop(f) = \log(3)$. Choose $N_m$ and $\xi_m$ such that $1 \leq \xi_m \leq |\theta_m|$, $N_m \geq 3$, $N_m$ is odd, and $\{ \frac{ \xi_m}{|\theta_m|} \log(N_m)\}_m$ increases to $a$. By Lemma \ref{SzeroNonempty}, there exists $\chi_m$ in $\cS(0,d,0)$ with $\{\theta_{\ell}^m\}_{\ell}$ and $\htop(\chi_m) = \log(N_m)$. Then Proposition \ref{BlowAndSewProp} implies that there exists a function $F$ in $\Bl(f, \{\theta_m\}_m, \{\chi_m\}_m, \{\theta_{\ell}^m\}_{m,\ell}, \{\xi_m\}_m)$. Lemma \ref{alphaIsOneLemma} implies that $F$ is in $\cS(1,d,a)$. Also, $\htop(F) = \max( \htop(f), \sup_m \frac{\xi_m}{|\theta_m|} \htop(\chi_m) ) \leq \max( c, a)$.

Now assume the lemma holds for some $p$. By the induction hypothesis, let $f$ be in $S(p,d,\frac{ap}{p+1})$ with $\{\theta_m\}_m$ such that $\htop(f) \leq \max(c, \frac{a}{p+1})$ and $||u_k^{\H(f)}|| = \frac{a k}{p+1}$ for $k = 1, \dots, p$. Choose $N_m$ and $\xi_m$ such that $1 \leq \xi_m \leq |\theta_m|$, $N_m \geq 3$, $N_m$ is odd, and $\{ \frac{ \xi_m}{|\theta_m|} \log(N_m)\}_m$ increases to $\frac{a}{p+1}$. By Lemma \ref{SzeroNonempty}, there exists $\chi_m$ in $\cS(0,d,0)$ with $\{\theta_{\ell}^m\}_{\ell}$ and $\htop(\chi_m) = \log(N_m)$. Then Proposition \ref{BlowAndSewProp} implies that there exists a function $F$ in $\Bl(f, \{\theta_m\}_m, \{\chi_m\}_m, \{\theta_{\ell}^m\}_{m,\ell}, \{\xi_m\}_m)$. Lemma \ref{alphaIsOneLemma} implies that $F$ is in $\cS(p+1,d,a)$. Also, $\htop(F) = \max( \htop(f), \sup_m \frac{\xi_m}{|\theta_m|} \htop(\chi_m) ) \leq \max( c, \frac{a}{p+1})$, and $||u_k^{\H(F)}|| = \frac{a k}{p+1}$ for $k = 1, \dots, p+1$.
\end{proof}

\begin{lem} \label{powersLemma}
Let $\al >1$ be a countable, irreducible ordinal. Let $C > 0$. Suppose that for any ordinal $\delta < \al$, and any real numbers $\epsilon$ and $a$ such that $0 < \epsilon < a$, there exists $f$ in $\cS(\al,d,a)$ such that $\htop(f) \leq c$ and
\begin{equation*}
||u_{\delta}^{\H(f)}|| \leq \epsilon.
\end{equation*}
Then for any $a>0$, and any natural number $p > 1$, there exists $F$ in $\cS(\al p, d,a)$ such that $\htop(F) \leq c$ and
\begin{equation*}
||u_{\al \ell}^{\H(F)}|| = \frac{\ell}{p}a, \text{  for } \ell = 1, \dots, p.
\end{equation*}
\end{lem}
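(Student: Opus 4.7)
The plan is to prove this by induction on $p$. For the base case $p = 1$, the conclusion reduces to producing some $F \in \cS(\al, d, a)$ with $\htop(F) \leq C$ and $||u_\al^{\H(F)}|| = a$; such an $F$ is furnished directly by the standing hypothesis (take any $\delta < \al$ and $\epsilon = a/2$), with the equality $||u_\al^{\H(F)}|| = a$ being built into the definition of $\cS(\al, d, a)$.

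For the inductive step, suppose $p \geq 2$ and that we have constructed $f \in \cS(\al(p-1), d, \tfrac{(p-1)a}{p})$ with $\htop(f) \leq C$ and $||u_{\al \ell}^{\H(f)}|| = \tfrac{\ell a}{p}$ for $\ell = 1, \ldots, p-1$, witnessed by periodic orbits $\{\theta_m\}_m$. After passing to a subsequence, we may assume $|\theta_m| \to \infty$, since the totally ergodic measures in $C(f)$ cannot be periodic. The aim is to apply Lemma \ref{alphaGOneLemma} with the substitutions $\beta \leftarrow \al(p-1)$, $b \leftarrow \tfrac{(p-1)a}{p}$, and target norm $a/p$ at level $\al$, which will yield $F \in \cS(\al + \al(p-1), d, a/p + (p-1)a/p) = \cS(\al p, d, a)$ by left-distributivity of ordinal multiplication. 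Since $\al$ is irreducible and $\al > 1$, we have $\al = \o^\mu$ for some $\mu$, so the Cantor normal form of $\al(p-1)$ is $\o^\mu + \cdots + \o^\mu$ ($p-1$ summands), giving $\beta_1 = \mu$ and confirming the hypothesis $\al \geq \o^{\beta_1}$. The required bound $||u_\al^{\H(f)}|| \leq a/p$ holds with equality by the inductive assumption at $\ell = 1$.

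To furnish the maps sewn in above the periodic orbits, fix a strictly increasing sequence $\{\delta_m\}_m$ of ordinals with limit $\al$ and a sequence $\{a_m\}_m$ of positive reals tending to infinity with $a_m > 1$. Using the standing hypothesis of the current lemma with $\delta = \delta_m$, target norm $a_m$, and $\epsilon = 1$, obtain $\chi_m \in \cS(\al, d, a_m)$ with $\htop(\chi_m) \leq C$ and $||u_{\delta_m}^{\H(\chi_m)}|| \leq 1$. Choose natural numbers $\xi_m$ with $1 \leq \xi_m \leq |\theta_m|$ so that $\tfrac{\xi_m}{|\theta_m|} a_m$ is strictly increasing to $a/p$; this is feasible because $a_m \to \infty$ forces $\xi_m/|\theta_m| \to 0$, and $|\theta_m| \to \infty$ makes the achievable rationals $\xi_m/|\theta_m|$ dense near zero. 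The remaining conditions $\tfrac{\xi_m}{|\theta_m|} \htop(\chi_m) \to 0$ and $\tfrac{\xi_m}{|\theta_m|} ||u_{\delta_m}^{\H(\chi_m)}|| \to 0$ then follow from $\xi_m/|\theta_m| \to 0$ together with the uniform bounds $\htop(\chi_m) \leq C$ and $||u_{\delta_m}^{\H(\chi_m)}|| \leq 1$. Applying Lemma \ref{alphaGOneLemma} produces the desired $F$ in $\cS(\al p, d, a)$ with $\htop(F) \leq C$, and formula (\ref{UgammaNormEqn}), combined with the identity $\al \ell = \al + \al(\ell-1)$, yields
\begin{equation*}
||u_{\al \ell}^{\H(F)}|| \;=\; \frac{a}{p} + ||u_{\al(\ell-1)}^{\H(f)}|| \;=\; \frac{a}{p} + \frac{(\ell-1)a}{p} \;=\; \frac{\ell a}{p}
\end{equation*}
for $\ell = 1, \ldots, p$ (with $||u_0^{\H(f)}|| = 0$ handling $\ell = 1$).

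The main obstacle is the simultaneous selection of the parameter sequences $\{a_m\}, \{\delta_m\}, \{\xi_m\}$ so that every convergence requirement of Lemma \ref{alphaGOneLemma} is met at once. This is possible precisely because the standing hypothesis of the present lemma lets us produce $\chi_m \in \cS(\al, d, a_m)$ with $a_m$ arbitrarily large while keeping $||u_{\delta_m}^{\H(\chi_m)}||$ uniformly bounded (indeed as small as we wish) for $\delta_m < \al$; coupled with the freedom to drive $\xi_m/|\theta_m| \to 0$ along a subsequence with $|\theta_m| \to \infty$, this is what makes the bookkeeping go through.
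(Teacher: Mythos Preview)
Your proof is correct and follows essentially the same inductive strategy as the paper's: induct on $p$, and at each step apply Lemma~\ref{alphaGOneLemma} (via the blow-and-sew construction) with $f$ taken from the previous stage and the $\chi_m$ supplied by the standing hypothesis. You make explicit several points the paper leaves implicit---the Cantor normal form check $\al \geq \o^{\beta_1}$ and the need for $|\theta_m|\to\infty$---though your justification for the latter via total ergodicity is not quite complete (fixed-point measures \emph{are} totally ergodic), which is still no worse than the paper, where this requirement is simply assumed without comment.
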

\begin{proof}
The proof proceeds by induction on $p$. We suppose it holds for $p$ and show it holds for $p+1$.

Let $f$ be in $\cS(\al p, d, \frac{a p }{p+1})$ with $\{\theta_m\}_m$ and satisfying the inductive hypotheses for $p$. Choose sequences $\{\delta_m\}_m$, $\{\xi_m\}_m$, and $\{a_m\}_m$ such that
\begin{itemize}
 \item $\{\delta_m\}_m$ is an increasing sequence of ordinals whose limit is $\al$;
 \item  $\{a_m\}_m$ is a sequence of positive real numbers tending to infinity;
 \item for each $m$, $\xi_m$ satisfies $1 \leq \xi_m \leq |\theta_m|$, and the sequence $\{\frac{\xi_m}{|\theta_m|} a_m\}_m$ is increasing to $\frac{a}{p+1}$.
\end{itemize}
Applying the hypothesis of the lemma, for each $m$ in $\N$, there exists $\chi_m$ in $\cS(\al,d, a_m)$ with $\{\theta_{\ell}^m\}_{\ell}$ such that $\htop(\chi_m) \leq c$ and  $||u_{\delta_m}^{\H(\chi_m)}|| \leq \min(\frac{a}{p+1},\frac{1}{m})$. Note that since $a_m$ tends to infinity and $\lim_m \frac{\xi_m}{|\theta_m|} a_m = \frac{a}{p+1}$, the sequence  $\{\frac{\xi_m}{|\theta_m|}\}$ tends to $0$. It follows that the sequence $\{ \frac{\xi_m}{|\theta_m|} \htop(\chi_m) \}_m$ tends to $0$. We assume without loss of generality that $\sup_m \{ \frac{\xi_m}{|\theta_m|} \htop(\chi_m) \}_m \leq c$ (if this inequality is not satisfied, replace $\chi_m$ by $\chi_{m+m_0}$ for sufficiently large $m_0$). Also, the sequence $\{\frac{\xi_m}{|\theta_m|} ||u_{\delta_m}^{\H(\chi_m)}||\}_m$ tends to $0$. Now by Proposition \ref{BlowAndSewProp}, there exists $F$ in $\Bl ( f, \{\theta_m\}_m, \{\chi_m\}_m, \{\theta^m_{\ell}\}_{m,\ell}, \{\xi_m\}_m)$. We have that $\htop(F) = \max( \htop(f), \sup_m \frac{\xi_m}{|\theta_m} \htop(\chi_m) ) \leq c$. By Lemma \ref{alphaGOneLemma}, $F$ is in $\cS(\al+ \al p, d, \frac{a}{p+1}+\frac{a p}{p+1}) = \cS(\al(p+1),d,a)$ and
\begin{itemize}
\item for any $\gamma < \al$, $||u_{\gamma}^{\H(F)}|| = \max \bigl( ||u_{\gamma}^{\H(f)}||, \, \sup_m ||u_{\gamma}^{\H(\chi_m)}|| \bigr)$;
 \item for $\gamma \geq 0$, $||u_{\al+\gamma}^{\H(F)}|| = \frac{a}{p+1}+||u_{\gamma}^{\H(f)}||$.
\end{itemize}
Then $||u_{\al}^{\H(F)}|| = \frac{a}{p+1}$, and the inductive hypotheses on $f$ imply that $||u_{\al \ell}^{\H(F)}|| = \frac{a \ell}{p+1}$ for $\ell = 1, \dots, p+1$. Thus $F$ satisfies the induction hypotheses for $p+1$, and by induction the lemma holds for all $p$.

\end{proof}


\begin{lem} \label{irrLemma}
Let $\al> 1$ be a countable, irreducible ordinal. Let $c \geq \log(3)$. Then for all ordinals $\delta < \al$ and all real numbers $\epsilon$ and $a$ such that $ 0 < \epsilon < a$, there exists $F$ in $\cS(\al,d, a) $ such that $\htop(F) \leq c$ and
\begin{equation*}
||u_{\delta}^{\H(F)}|| \leq \epsilon.
\end{equation*}
\end{lem}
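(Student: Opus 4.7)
The plan is to proceed by transfinite induction on the countable irreducible ordinal $\al > 1$, splitting into three cases according to whether $\al = \o$, $\al = \o^{\gamma+1}$ for some $\gamma \geq 1$, or $\al = \o^{\gamma}$ for a countable limit ordinal $\gamma$. In each case the construction follows the same template: start from a base $f \in \cS(0,d,0)$ with $\htop(f) = \log 3 \leq c$ (supplied by Lemma \ref{SzeroNonempty}), and invoke Lemma \ref{alphaGOneLemma} with $\beta = 0$ and $b = 0$, so that any $F$ in the resulting blow-and-sew class lies in $\cS(\al + 0, d, a + 0) = \cS(\al,d,a)$. The ``furthermore'' clause of Lemma \ref{alphaGOneLemma}, applied by truncating the index sequences past a sufficiently large $m_0$, will then deliver $\|u_\delta^{\H(F)}\| \leq \epsilon$, while Corollary \ref{HtopCor} together with the fact that $\frac{\xi_m}{|\theta_m|}\htop(\chi_m) \to 0$ (which can additionally be driven small by truncation) will yield $\htop(F) \leq c$.

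The three cases differ only in how the sewn-in sequence $\chi_m \in \cS(\al_m,d,a_m)$ and the ordinal sequences $\al_m \nearrow \al$ and $\delta_m \nearrow \al$ (with $\delta_m < \al_m$) demanded by Lemma \ref{alphaGOneLemma} are produced. For $\al = \o$, set $\al_m = 2^m$, use Lemma \ref{BaseLemma} to obtain $\chi_m \in \cS(2^m,d,a_m)$ satisfying $\|u_k^{\H(\chi_m)}\| = a_m k/2^m$ and, provided $a_m \leq c \cdot 2^m$, $\htop(\chi_m) \leq c$; set $\delta_m = m$. For $\al = \o^{\gamma+1}$, the irreducible ordinal $\o^{\gamma}$ satisfies the induction hypothesis, so Lemma \ref{powersLemma} applied with $p = 2^m$ yields $\chi_m \in \cS(\o^{\gamma} \cdot 2^m, d, a_m)$ with $\|u_{\o^{\gamma}\ell}^{\H(\chi_m)}\| = a_m \ell/2^m$ and $\htop(\chi_m) \leq c$; set $\delta_m = \o^{\gamma} \cdot m$ and use monotonicity of $u_{\cdot}$ to bound $\|u_{\delta_m}^{\H(\chi_m)}\|$. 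For $\al = \o^{\gamma}$ with $\gamma$ a countable limit, pick $\gamma_m \nearrow \gamma$ and invoke the induction hypothesis at the irreducible ordinal $\o^{\gamma_m}$ to obtain $\chi_m \in \cS(\o^{\gamma_m},d,a_m)$ with $\|u_{\delta_m}^{\H(\chi_m)}\| \leq 1/m$ at a prescribed $\delta_m < \o^{\gamma_m}$, arranging $\delta_m \nearrow \o^{\gamma}$. In all cases, choose $a_m \to \infty$ and $\xi_m$ so that $\{\xi_m a_m/|\theta_m|\}$ increases to $a$, which automatically forces $\xi_m/|\theta_m| \to 0$.

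The main obstacle is simultaneous satisfaction of all the asymptotic hypotheses of Lemma \ref{alphaGOneLemma}, and in particular the tension between the requirement $\|u_{\al_m}^{\H(\chi_m)}\| = a_m \to \infty$ (necessary so that the limit $\xi_m a_m/|\theta_m| \to a$ can be realized with $\xi_m/|\theta_m| \to 0$) and the requirement $\frac{\xi_m}{|\theta_m|}\|u_{\delta_m}^{\H(\chi_m)}\| \to 0$. This forces $\delta_m$ to lie deep inside the transfinite sequence of $\chi_m$ relative to $\al_m$, arranged via the explicit linear-growth formulas of Lemmas \ref{BaseLemma} and \ref{powersLemma} combined with the exponential gap $m \ll 2^m$ in the first two cases, and by the quantitative $\epsilon = 1/m$ flexibility of the induction hypothesis itself in the third case. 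Once this bookkeeping is settled, Lemma \ref{alphaGOneLemma} produces the required $F$ directly.
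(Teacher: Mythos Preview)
Your proposal is correct and follows essentially the same approach as the paper: transfinite induction on the irreducible ordinal $\al = \o^{\beta}$ with three cases, each using a base $f\in\cS(0,d,0)$ and invoking Lemma~\ref{alphaGOneLemma} with $\beta=0$, the sewn-in maps $\chi_m$ coming from Lemma~\ref{BaseLemma}, Lemma~\ref{powersLemma}, or the induction hypothesis respectively. The only difference is cosmetic: where the paper takes $\al_m$ linear in $m$ and $\delta_m = [\log m]$ (or the $\o^{\beta}$-multiples thereof), you take $\al_m = 2^m$ and $\delta_m = m$, which achieves the same separation and in fact makes the strict monotonicity of $\{\delta_m\}$ required by Lemma~\ref{alphaGOneLemma} immediate.
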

\begin{proof}
The proof is by transfinite induction on the irreducible ordinals $\al >1$. For notation, we let $\al = \o^{\beta}$, and use transfinite induction on $\beta \geq 1$.

\begin{case}[$\beta=1$] Let $f$ be in $\cS(0,d)$ with $\{\theta_m\}_m$ and $\htop(f)=\log(3)$ (such a map $f$ exists by Lemma \ref{SzeroNonempty}). Let $a$, $\epsilon$, and $\delta$ be as in the statement of the lemma. Choose sequences $\{a_m\}_m$ and $\{\xi_m\}_m$ such that
\begin{itemize}
 \item $\{a_m\}_m$ tends to infinity;
 \item $\xi_m$ is a natural number such that $1 \leq \xi_m \leq |\theta_m|$;
 \item the sequence $\{\frac{\xi_m}{|\theta_m|} a_m \}_m$ increases to $a$.
\end{itemize}
By Lemma \ref{BaseLemma}, there exists $\chi_m$ in $\cS(m,d,a_m)$ with $\{\theta^m_{\ell}\}_{\ell}$ and such that $\htop(\chi_m) \leq \max( c, \frac{a_m}{m})$ and $||u_{k}^{\H(\chi_m)}|| = \frac{a_m k}{m}$ for $k = 1, \dots, m$. Note that since $a_m$ tends to infinity and $\{\frac{\xi_m}{|\theta_m|} a_m\}_m$ increases to $a$, we have that $\{ \frac{\xi_m}{|\theta_m|} \htop(\chi_m) \}_m$ tends to $0$. Thus we assume without loss of generality that $\sup_m \frac{\xi_m}{|\theta_m|} \htop(\chi_m) \leq c$ (by replacing $\chi_m$ with $\chi_{m+m_0}$ for sufficiently large $m_0$ if necessary). Let $\delta_m = [\log(m)]$, the integer part of $\log(m)$. Then we obtain that $\{\frac{\xi_m}{|\theta_m|} ||u_{\delta_m}^{\H(\chi_m)}||\}_m$ tends to $0$ (since $\frac{\xi_m}{|\theta_m|} ||u_{\delta_m}^{\H(\chi_m)}|| = \frac{\xi_m a_m [\log(m)]}{|\theta_m| m} \leq \frac{a [\log(m)]}{m}$). By Proposition \ref{BlowAndSewProp}, there exists $F$ in $\Bl ( f, \{\theta_m\}_m, \{\chi_m\}_m, \{\theta^m_{\ell}\}_{m,\ell}, \{\xi_m\}_m)$. Then by Lemma \ref{alphaIsOneLemma}, $F$ is in $\cS(\o,d,a)$. Also, $\htop(F) = \max( \htop(f), \sup_m \frac{\xi_m}{|\theta_m|} \htop(\chi_m)) \leq c$. Also, the final statement in Lemma \ref{alphaIsOneLemma} gives that for $0< \epsilon < a$ and $\delta < \al$, there exists $m_0$ such that replacing $\chi_m$ with $\chi_{m+m_0}$ produces $F$ such that $||u_{\delta}^{\H(F)}|| \leq \epsilon$.
\end{case}

\begin{case}[successor ordinal]
Now suppose the lemma holds for the irreducible ordinal $\o^{\beta}$. We show that it also holds for $\o^{\beta+1}$. Let $f$ be in $\cS(0,d)$ with $\{\theta_m\}_m$ and $\htop(f) = \log(3)$. Choose sequences $\{\al_m\}_m$, $\{\delta_m\}$, $\{a_m\}_m$ and $\{\xi_m\}_m$ such that
\begin{itemize}
\item $\al_m = \o^{\beta} m $;
 \item $\delta_m = \o^{\beta} [\log(m)]$;
 \item  $\{a_m\}_m$ is a sequence of positive real numbers tending to infinity;
 \item for each $m$, $\xi_m$ satisfies $1 \leq \xi_m \leq |\theta_m|$, and the sequence $\{\frac{\xi_m}{|\theta_m|} a_m\}_m$ is increasing to $a$.
\end{itemize}
The inductive hypotheses imply that the hypotheses in Lemma \ref{powersLemma} are satisfied for $\o^{\beta}$. Applying Lemma \ref{powersLemma} for each $m$ in $\N$, we obtain that there exists $\chi_m$ such that
\begin{itemize}
 \item $\chi_m$ is in $\cS(\o^{\beta} m , d , a_m)$ with $\{\theta_{\ell}^m\}_{\ell}$;
 \item $ \htop(\chi_m) \leq c$;
 \item $||u_{\delta_m}^{\H(\chi_m)}|| = \frac{ a_m [\log(m)]}{m} $.
\end{itemize}
Since $\{a_m\}_m$ tends to infinity and $\{\frac{\xi_m}{|\theta_m|} a_m\}_m$ tends to $a$, $\{\frac{\xi_m}{|\theta_m|}\}_m$ tends to $0$. Therefore $\{\frac{\xi_m}{|\theta_m|} \htop(\chi_m) \}_m$ tends to $0$. Also, we have that $\{\frac{\xi_m}{|\theta_m|} ||u_{\delta_m}^{\H(\chi_m)}||\}_m$ tends to $0$. We assume without loss of generality that $\sup_m \frac{\xi_m}{|\theta_m|} ||u_{\delta_m}^{\H(\chi_m)}|| \leq \epsilon$. Now let $\delta < \al$ and $0< \epsilon < a$ be arbitrary. There exists $m_0$ such that $\delta_m > \delta$ for all $m \geq m_0$. Also, there exists $m_1$ such that $\frac{a [\log(m)]}{m} < \epsilon$ for all $m \geq m_1$. Let $m_2 = \max(m_0,m_1)$. Replace $\chi_m$ by $\chi_{m+m_2}$. By Proposition \ref{BlowAndSewProp}, there exists $F$ in $\Bl ( f, \{\theta_m\}_m, \{\chi_m\}_m, \{\theta^{m}_{\ell}\}_{m,\ell}, \{\xi_m\}_m)$. We have that $\htop(F) = \max( \htop(f), \sup_m \frac{\xi_m}{|\theta_m|} \htop(\chi_m)) \leq c$. Then Lemma \ref{alphaGOneLemma} implies that $F$ is in $\cS(\o^{\beta+1},d,a)$ and
\begin{equation*}
||u_{\delta}^{\H(F)}|| = \max \Bigl( ||u_{\delta}^{\H(f)}||, \; \sup_m \; \frac{\xi_m}{|\theta_m|} ||u_{\delta}^{\H(\chi_m)}|| \Bigr) = \sup_m \; \frac{\xi_m}{|\theta_m|} ||u_{\delta}^{\H(\chi_m)}|| \leq \epsilon,
\end{equation*}
as desired.
\end{case}

\begin{case}[$\beta$ limit ordinal]
Now suppose the lemma holds for all irreducible ordinals $\o^{\gamma} < \o^{\beta}$, where $\beta$ is a limit ordinal. We show that it also holds for $\o^{\beta}$. Let $f$ be in $\cS(0,d)$ with $\{\theta_m\}_m$ and $\htop(f) \leq c$. Choose a sequence $\{a_m\}_m$ of positive real numbers tending to infinity and an increasing sequence of ordinals $\{\beta_m\}_m$ tending to $\beta$. The inductive hypothesis implies that for each $m$, there exists $\chi_m$ in $\cS(\o^{\beta_m},d,a_m)$ with $\{\theta_{\ell}^m\}_{\ell}$ such that $\htop(\chi_m) \leq c$ and  $||u_{\o^{\beta_{m-1}}}^{\H(\chi_m)}|| \leq \frac{1}{m}$. Now let $\delta < \o^{\beta}$ and $\epsilon >0$ be arbitrary. There exists $m_0$ such that $\delta_m > \delta$ for all $m \geq m_0$. Also, there exists $m_1$ such that $\frac{1}{m} < \epsilon$ for all $m \geq m_1$. Let $m_2 = \max(m_0,m_1)$. Then replace $\chi_m$ by $\chi_{m+m_2}$. By Proposition \ref{BlowAndSewProp}, there exists $F$ in $\Bl ( f, \{\theta_m\}_m, \{\chi_m\}_m, \{\theta^{m}_{\ell}\}_{m,\ell}, \{\xi_m\}_m)$. By Corollary \ref{HtopCor}, we have that $\htop(F) = \max( \htop(f), \sup_m \frac{\xi_m}{|\theta_m|} \htop(\chi_m)) \leq c$. Then Lemma \ref{alphaGOneLemma} implies that $F$ is in $\cS(\o^{\beta},d,a)$ and
\begin{equation*}
||u_{\delta}^{\H(F)}|| = \max \Bigl( ||u_{\delta}^{\H(f)}||, \; \sup_m \; \frac{\xi_m}{|\theta_m|} ||u_{\delta}^{\H(\chi_m)}|| \Bigr) = \sup_m \; \frac{\xi_m}{|\theta_m|} ||u_{\delta}^{\H(\chi_m)}|| \leq \epsilon,
\end{equation*}
as desired.
\end{case}
\end{proof}

\section{Proofs of the main results} \label{mainResults}

\begin{thm} \label{mainDiscThm}
Let $d$ be in $\{1,2\}$. For every countable ordinal $\al>0$ and any $a>0$, there is a map $F$ in $\cS(\al,d,a)$.
\end{thm}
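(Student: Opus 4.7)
The plan is to proceed by transfinite induction on $\al$, using the Cantor normal form. There are three cases. If $\al = p$ is a finite positive ordinal, Lemma \ref{BaseLemma} directly produces $F \in \cS(p,d,a)$. If $\al = \o^{\beta}$ is infinite and irreducible (so $\beta \geq 1$), then $\al > 1$ and Lemma \ref{irrLemma}, applied with $c = \log 3$ and any choice of $\delta < \al$ and $0 < \epsilon < a$, produces $F \in \cS(\al, d, a)$ (the extra bound on $\|u_{\delta}^{\H(F)}\|$ from that lemma is not used here).

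For the inductive step, suppose $\al$ is infinite and reducible, so in Cantor normal form $\al = \o^{\gamma_1} + \o^{\gamma_2} + \dots + \o^{\gamma_k}$ with $\gamma_1 \geq \dots \geq \gamma_k$ and $k \geq 2$. I would write $\al = \al' + \beta_0$ with $\al' = \o^{\gamma_1}$ (irreducible and greater than $1$, since $\gamma_1 \geq 1$) and $\beta_0 = \o^{\gamma_2} + \dots + \o^{\gamma_k} < \al$, and then invoke the inductive hypothesis to obtain $f \in \cS(\beta_0, d, a/2)$ with an accompanying sequence of periodic orbits $\{\theta_m\}_m$. Following the pattern of the previous lemmas in Section \ref{InductionScheme}, I would fix $a_m \to \infty$, a strictly increasing sequence $\delta_m < \al'$ with $\delta_m \to \al'$, and choose $\xi_m \in \{1, \dots, |\theta_m|\}$ so that $\frac{\xi_m}{|\theta_m|} a_m$ increases to $a/2$; since $a_m \to \infty$, this forces $\frac{\xi_m}{|\theta_m|} \to 0$. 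By Lemma \ref{irrLemma} applied to $\al'$, for each $m$ there exists $\chi_m \in \cS(\al', d, a_m)$ with $\htop(\chi_m) \leq \log 3$ and $\|u_{\delta_m}^{\H(\chi_m)}\| \leq 1/m$, and then the two vanishing conditions $\frac{\xi_m}{|\theta_m|} \htop(\chi_m) \to 0$ and $\frac{\xi_m}{|\theta_m|}\|u_{\delta_m}^{\H(\chi_m)}\| \to 0$ follow immediately.

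It remains to verify the remaining hypotheses of Lemma \ref{alphaGOneLemma}. The condition $\al' \geq \o^{\gamma_2}$ on the leading exponent of $\beta_0$ is immediate from $\gamma_1 \geq \gamma_2$, and the bound $\|u_{\al'}^{\H(f)}\| \leq a/2$ holds automatically, because $\al_0(\H(f)) = \beta_0$ and the transfinite sequence is non-decreasing, so $\|u_{\al'}^{\H(f)}\| \leq \|u_{\beta_0}^{\H(f)}\| = a/2$ regardless of whether $\al' \leq \beta_0$ or $\al' > \beta_0$. Lemma \ref{alphaGOneLemma} then produces $F \in \Bl(f, \{\theta_m\}_m, \{\chi_m\}_m, \{\theta_{\ell}^m\}_{m,\ell}, \{\xi_m\}_m)$ lying in $\cS(\al' + \beta_0, d, a/2 + a/2) = \cS(\al, d, a)$, which closes the induction. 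The essential technical work is already contained in Lemmas \ref{alphaGOneLemma} and \ref{irrLemma}; the only genuinely delicate point here is recognizing that splitting off the leading term of the Cantor normal form is precisely the decomposition these lemmas are designed to handle, and that the automatic bound $\|u_{\al'}^{\H(f)}\| \leq a/2$ permits the symmetric split $a = a/2 + a/2$ of the target norm. In this sense the main obstacle is conceptual rather than computational.
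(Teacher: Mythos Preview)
Your proposal is correct and follows essentially the same approach as the paper's proof: both peel off the leading irreducible term $\omega^{\gamma_1}$ of the Cantor normal form, obtain $f$ in $\cS$ for the tail by induction, supply the $\chi_m$ from Lemma~\ref{irrLemma}, and assemble via Lemma~\ref{alphaGOneLemma}. The only cosmetic differences are that the paper organizes the induction on the number $n$ of Cantor normal form terms rather than directly on $\al$, and allows an arbitrary split $a = a_1 + a_0$ with $a_1 \geq a_0$ where you take the symmetric split $a/2 + a/2$; your explicit verification of $\|u_{\al'}^{\H(f)}\| \leq a/2$ via stabilization at $\al_0(f) = \beta_0$ is in fact more careful than the paper, which leaves that hypothesis implicit.
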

\begin{proof}
Let $\al = \o^{\beta_1}+ \dots + \o^{\beta_n}$, with $\beta_1 \geq \dots \geq \beta_n$. We argue by induction on $n$. If $n=1$, then either Lemma \ref{BaseLemma} (if $\beta_1 = 0$) or Lemma \ref{irrLemma} (if $\beta_1 >0$) implies that there exists $F$ in $\cS(\al,d,a)$. Suppose the statement holds for $n$. We show that it holds for $n+1$. If $\beta_1 = 0$, then Lemma \ref{BaseLemma} implies that $F$ exists with the desired properties. Now suppose $\beta_1 >0$. Let $a_1 \geq a_0 >0$ with $a_1+a_0 = a$. By the induction hypothesis, there exists $f$ in $\cS(\o^{\beta_2}+\dots+\o^{\beta_n},d,a_0)$ with $\{\theta_m\}_m$. Choose sequences $\{a_m\}_m$, $\{\delta_m\}_m$, and $\{\xi_m\}_m$ such that
\begin{itemize}
\item $\{a_m\}_m$ is a sequence of positive numbers tending to infinity;
\item $\{\delta_m\}_m$ is an increasing sequence of ordinals tending to $\o^{\beta_1}$;
\item $1 \leq \xi_m \leq |\theta_m|$ and the sequence $\{\frac{\xi_m}{|\theta_m|} a_m\}_m$ increases to $a$.
\end{itemize}
Let $c \geq \log(3)$. Then for each $m$, Lemma \ref{irrLemma} implies that there exists $\chi_m$ in $\cS(\o^{\beta_1},d,a_m)$ with $\{\theta^m_{\ell}\}_{\ell}$ such that $\htop(\chi_m) \leq c$ and $||u_{\delta_m}^{\H(\chi_m)}|| \leq \frac{1}{m}$. Note that $\{\frac{\xi_m}{|\theta_m|} \htop(\chi_m)\}$ tends to $0$ with these choices of parameters. By Proposition \ref{BlowAndSewProp}, there exists $F$ in $\Bl ( f, \{\theta_m\}_m, \{\chi'_m\}_m, \{\theta^{m'}_{\ell}\}_{m,\ell}, \{\xi_m\}_m)$. By Lemma \ref{alphaGOneLemma}, $F$ is in $\cS(\o^{\beta_1}+\o^{\beta_2}+\dots+ \o^{\beta_n},d,a_1+a_0) = \cS(\al,d,a)$, which completes the induction and the proof.
\end{proof}


\begin{cor} \label{higherDimCor}
Let $\al$ be a countable ordinal, let $a >0$ and let $d$ be in $\N$. Let $\D$ be the closed unit ball in $\R^d$. Then there exists a continuous surjection $f : \D \to \D$ such that $f|_{\partial \D} = \Id$, $\al_0(f) = \al$, and $||u_{\al}^{\H(f)}||=a$. If $d \geq 2$, then $f$ can be chosen to be a homeomorphism.
\end{cor}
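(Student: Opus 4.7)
The plan is to split into $d \in \{1,2\}$ and $d \geq 3$. For $d \in \{1,2\}$, Theorem \ref{mainDiscThm} provides a map $F \in \cS(\al,d,a)$; by Definition \ref{classesDefn}, $F : \D \to \D$ is a continuous surjection with $F|_{\partial \D} = \Id$, $\al_0(F) = \al$, and $\|u_{\al}^{\H(F)}\| = a$, and $F$ is a homeomorphism when $d = 2$ by the definition of $\Cl_2$. For $d \geq 3$, I would take $F_2 \in \cS(\al,2,a)$ and fiber $\D^d$ over $\D^{d-2}$: writing points of $\D^d$ as $(z,y)$ with $z \in \R^2$, $y \in \R^{d-2}$, $|z|^2+|y|^2 \leq 1$, and setting $r(y) = \sqrt{1-|y|^2}$, define
\[
F_d(z,y) = \begin{cases} \bigl(r(y)\, F_2(z/r(y)),\, y\bigr), & r(y) > 0, \\ (z,y), & r(y) = 0.\end{cases}
\]
Continuity at $r(y) = 0$ follows from $|z| \leq r(y) \to 0$, and substituting $F_2^{-1}$ for $F_2$ produces the inverse, so $F_d$ is a homeomorphism. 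On $\partial \D^d$ one has $|z| = r(y)$, hence $z/r(y) \in \partial \D^2$ where $F_2 = \Id$, so $F_d|_{\partial \D^d} = \Id$.

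The remaining task is to verify $\al_0(F_d) = \al$ and $\|u_{\al}^{\H(F_d)}\| = a$. The projection $\pi(z,y) = y$ satisfies $\pi \circ F_d = \pi$, so every ergodic $F_d$-invariant measure is supported on a single fiber $\D^2_y := \pi^{-1}(y)$; for $y \in \interior(\D^{d-2})$ the affine scaling $\Phi_y(z) = r(y)z$ is a topological conjugacy between $F_2$ and $F_d|_{\D^2_y}$, inducing an affine homeomorphism $M(\D^2,F_2) \to M(\D^2_y, F_d)$ that preserves metric entropy. Denote the image of $\nu \in M(\D^2,F_2)$ under this correspondence by $\nu_y$. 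Using the Katok entropy structure (Definition \ref{KatokESDef}), a Bowen ball of radius $\epsilon$ in $\D^d$ intersected with $\D^2_y$ pulls back through $\Phi_y$ to a Bowen ball of radius $\epsilon/r(y)$ in $\D^2$, so $h_k^{F_d}(\nu_y) = h^{F_2}(\nu,\epsilon_k/r(y),\sigma)$. I would then take a closed set $C \subset M(\D^d,F_d)$ consisting of $\nu_y$ for $y$ in a fixed compact subset of $\interior(\D^{d-2})$ and $\nu \in C(F_2)$, invoke the Embedding Lemma (Lemma \ref{EmbeddingLemma}) together with Fact \ref{TransSeqFacts} (4), and reduce the computation of $u_{\gamma}^{\H(F_d)}$ at ergodic measures on interior fibers to that of $F_2$, yielding $u_{\gamma}^{\H(F_d)}(\nu_y) = u_{\gamma}^{\H(F_2)}(\nu)$. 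Taking the supremum and using Fact \ref{TransSeqFacts} (5) then gives $\al_0(F_d) = \al$ and $\|u_{\al}^{\H(F_d)}\| = a$.

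The main obstacle will be the non-uniformity of the entropy structure near the degenerate fibers where $r(y) \to 0$: the rescaled Katok scales $\epsilon_k/r(y)$ blow up, so $h_k^{F_d}$ converges to $h^{F_d}$ non-uniformly across fibers, and measures supported on very small fibers look artificially trivial to any fixed scale. To prevent this from inflating the transfinite sequence, the set $C$ must be kept bounded away from $\partial \D^{d-2}$; the resulting upper bound on $u_{\gamma}^{\H(F_d)}$ is then obtained from upper semi-continuity and the concavity and supharmonicity in Fact \ref{TransSeqFacts} (5), while the lower bound comes directly from interior fibers, where the 2-dimensional values are realized exactly through the conjugacy $\Phi_y$.
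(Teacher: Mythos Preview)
Your treatment of $d\in\{1,2\}$ is fine and matches the paper. For $d\ge 3$, however, the paper takes a much simpler route: it inducts on $d$, adding one dimension at a time with north--south dynamics $T(t)=t+\frac{1}{10}\sin(\pi t)$ in the new coordinate, so that $\NW(f)\subset\{(x,0)\}\cup\partial[-1,1]^{d+1}$. The non-wandering set is then a single copy of the $d$-dimensional system together with a set on which $f$ is the identity, and the conclusions $\al_0(f)=\al_0(g)$ and $\|u_{\al}^{\H(f)}\|=\|u_{\al}^{\H(g)}\|$ follow immediately with no analysis of the transfinite sequence required.

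Your fibered construction, by contrast, places a conjugate copy of $F_2$ on \emph{every} fiber $\D^2_y$, and the argument you sketch for controlling the transfinite sequence has a genuine gap at the degenerate fibers. You propose to apply the Embedding Lemma with a closed set $C$ bounded away from $\partial\D^{d-2}$, but the hypothesis of that lemma requires $F\subset C$ where $F$ is the locus on which $(h-h_k)$ fails to converge uniformly to zero --- and you yourself observe that this failure occurs precisely on fibers with $r(y)\to 0$. So $C$ cannot avoid the boundary. More concretely, for a boundary point mass $\delta_p$ and any fixed $k$, measures $\nu_y$ with $r(y)$ small enough have $h_k^{F_d}(\nu_y)=0$ while $h^{F_d}(\nu_y)=h^{F_2}(\nu)$; hence $u_1^{\H(F_d)}(\delta_p)\ge\htop(F_2)$, and iterating gives $u_2^{\H(F_d)}(\delta_p)\ge\sup_{\nu}\bigl(u_1^{\H(F_2)}(\nu)+h^{F_2}(\nu)\bigr)$. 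There is no reason this should equal $u_1^{\H(F_d)}(\delta_p)$, so the boundary may force $\al_0(F_d)>\al_0(F_2)$, and likewise $\|u_{\al}^{\H(F_d)}\|$ need not equal $a$. The appeal to concavity and supharmonicity in Fact~\ref{TransSeqFacts}(5) does not address this, since the problem is the value of $u_\gamma$ at the ergodic boundary measures themselves, not the extension to non-ergodic measures. The paper's approach sidesteps all of this by ensuring the extra dimensions contribute nothing to $\NW$.
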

\begin{proof}
If $d$ is $1$ or $2$, then Theorem \ref{mainDiscThm} implies that there exists $g$ in $\cS(\al,d,a)$, which satisfies the conclusion of the corollary. We remark that since $\D$ and $[-1,1]^d$ are homeomorphic, the statement of the theorem is equivalent to the same statement with $[-1,1]^d$ in place of $\D$. Thus we may consider all maps defined on $[-1,1]^d$ without loss of generality. The proof proceeds by induction on $d$. Suppose the corollary holds for some $d \geq 2$. Using this inductive hypothesis, choose a homeomorphism $g : [-1,1]^d \to [-1,1]^d$ such that $g|_{\partial [-1,1]^d} = \Id$, $\al_0(g) = \al$, and $||u_{\al}^{\H(g)}||=a$. Then there exists a homeomorphism $f : [-1,1]^{d+1} \to [-1,1]^{d+1}$ such that $f|_{\partial [-1,1]^{d+1}} = \Id$, $f(x,0) = (g(x),0)$ for $x$ in $[-1,1]^d$, and $\NW(f) \subset \{(x,t) \in [-1,1]^d \times [-1,1] : t = 0\} \cup \partial [-1,1]^{d+1}$. Such a map $f$ may be constructed as follows. Let
\begin{equation*}
V = \{(x_1, \dots, x_d, t) \in [-1,1]^{d+1} : |x_i| \leq (1 - |t|) \text{ for } 1 \leq i \leq d \}. 
\end{equation*}
Also, define $T : [-1,1] \to [-1,1]$ by $T(t) = t + \frac{1}{10} \sin(\pi t)$. For $x$ in $\partial [-1,1]^{d+1}$, let $f(x) = x$. For $(x,t)$ in $V$ (where $x \in [-1,1]^d$) such that $|t|<1$, let
\begin{equation*}
f(x,t) = \Biggl( (1-|T(t)|) g\biggl(\frac{1}{(1-|t|)} x\biggr) , \, T(t) \Biggr).
\end{equation*}
We have defined $f$ on $V \cup \partial [-1,1]^{d+1}$. For any point $p$ in $[-1,1]^{d+1} \setminus (V \cup \partial [-1,1]^{d+1})$, let $\ell_p$ denote the line in $\R^{d+1}$ passing through $p$ and the origin. Let $p_1$ and $p_2$ be the points such that $\{p_1\} = \partial V \cap \ell_p$ and $\{p_2\} = \partial[-1,1]^{d+1} \cap \ell_p$. Then let $s$ in $[0,1]$ be such that $p = s p_1 + (1-s) p_2$. Now define $f(p) = s f(p_1) + (1-s) f(p_2)$. With this definition, $f$ is a homeomorphism of $[-1,1]^{d+1}$ (using that $g|_{\partial [-1,1]^d} = \Id$). Furthermore, we have that $f|_{\partial [-1,1]^{d+1}} = \Id$, $f(x,0) = (g(x),0)$ for $x$ in $[-1,1]^d$, and $\NW(f) \subset \{(x,t) \in [-1,1]^d \times [-1,1] : t = 0\} \cup \partial [-1,1]^{d+1}$. Then $\al_0(f) = \al_0(g) = \al$ and $||u_{\al}^{\H(f)}|| = ||u_{\al}^{\H(g)}|| = a$. In this way we have verified the inductive hypotheses for $d+1$, which finishes the proof of the corollary.
\end{proof}

The following theorem, which we view as our main result, answers a question of Todd Fisher.

\begin{thm} \label{mainThm}
 Let $\al$ be a countable ordinal and let $a>0$. Let $M$ be a compact manifold. Then there exists a continuous surjection $f : M \to M$ such that $\al_0(f)=\al$ and $||u_{\al}^{\H(f)}|| = a$. If $\dim(M) \geq 2$, then $f$ can be chosen to be a homeomorphism.
\end{thm}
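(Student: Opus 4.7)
The plan is to implant the ball-model dynamics from Corollary \ref{higherDimCor} into $M$ via a coordinate chart and extend by the identity elsewhere. Set $d = \dim(M)$. By Corollary \ref{higherDimCor} there exists a continuous surjection $g : \D \to \D$ (a homeomorphism when $d \geq 2$) with $g|_{\partial \D} = \Id$, $\al_0(g) = \al$, and $||u_\al^{\H(g)}|| = a$. Since $M$ is a $d$-manifold, pick any topological embedding $\iota : \D \hookrightarrow M$ onto a closed topological $d$-ball $B = \iota(\D) \subset M$, and define
\begin{equation*}
F(x) = \begin{cases} \iota \circ g \circ \iota^{-1}(x), & x \in B, \\ x, & x \in M \setminus B. \end{cases}
\end{equation*}
Since $g|_{\partial \D} = \Id$, the two pieces agree on $\partial B$, so $F$ is a well-defined continuous surjection of $M$; if $g$ is a homeomorphism, so is $F$.

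To verify $\al_0(F) = \al$ and $||u_\al^{\H(F)}|| = a$, the decisive point is that every $x \in M \setminus \interior(B)$ is a fixed point of $F$. Consequently each ergodic measure of $F$ is either of the form $\iota_* \nu$ for some $\nu \in M_{\erg}(\D, g)$, or a point mass at a fixed point in $M \setminus \interior(B)$. I will use the Katok entropy structure $\H(F) = (h_k^F)$ with parameters $\epsilon_k \to 0$ chosen small enough that every Bowen ball $B_F(x, n, \epsilon_k)$ with $x \in \iota(\interior(\D))$ remains inside $B$. This forces $h_k^F(\iota_* \nu) = h_k^g(\nu)$ for the corresponding Katok entropy structure $(h_k^g)$ of $g$, while $h_k^F \equiv 0$ on measures supported in $M \setminus \interior(B)$, since each ergodic component there is a fixed-point mass and has zero entropy at every scale. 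Values at non-ergodic measures are recovered by harmonic extension, using Fact \ref{TransSeqFacts}(5).

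By transfinite induction on $\gamma$, this identification transfers to the transfinite sequences: $u_\gamma^{\H(F)}(\iota_* \mu) = u_\gamma^{\H(g)}(\mu)$ for all $\mu \in M(\D, g)$, while $u_\gamma^{\H(F)}$ vanishes on measures supported in $M \setminus \interior(B)$. The inductive step relies on the fact that any sequence $\nu_n \to \mu$ in $M(M, F)$ with $\mu$ supported in $B$ decomposes into a portion in $M(B, F|_B)$ converging to $\mu$ and a fixed-point portion whose $h^F$ and $h_k^F$ are identically zero; this reduces the $\limsup$ defining $\widetilde{u_\gamma^{\H(F)} + \tau_k}(\mu)$ to the analogous $\limsup$ for $g$. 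Because $u_\al^{\H(F)}$ is concave and u.s.c. on the Choquet simplex $M(M, F)$, it attains its supremum on extreme points, so $||u_\al^{\H(F)}|| = ||u_\al^{\H(g)}|| = a$. Moreover $u_\gamma^{\H(F)}$ stabilizes in $\gamma$ precisely when $u_\gamma^{\H(g)}$ does, giving $\al_0(F) = \al_0(g) = \al$.

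The chief obstacle is the bookkeeping in this inductive step: one must check that fixed-point perturbations of a $B$-supported measure contribute nothing to $\widetilde{u_\gamma + \tau_k}$, which in turn rests on $h^F$ and each $h_k^F$ vanishing identically on measures supported in $M \setminus \interior(B)$. This is essentially the same mechanism that is used implicitly in the proof of Corollary \ref{higherDimCor} to extend a $d$-ball map to the $(d+1)$-cube by the identity, so the manifold case amounts to globalizing that step across a single coordinate chart.
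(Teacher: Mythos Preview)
Your route differs from the paper's: the paper first conjugates $g$ into the half-radius disc and fills the annulus $\{\frac12\le |x|\le 1\}$ with a radial drift $r\mapsto r+\frac{1}{10}\sin(2\pi r)$, so that after embedding into $M$ the non-wandering set splits as two \emph{disjoint} closed invariant pieces, $\phi(\overline{B(\0,\frac12)})$ and $M\setminus\phi(\interior(\D))$. That disjointness is exactly what makes the concluding line ``$\al_0(f)=\al_0(g)$, $||u_\al^{\H(f)}||=||u_\al^{\H(g)}||$'' immediate. You glue $g$ directly, so $B$ and the identity region meet along $\partial B$, and several of your justifications break down.

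Concretely, three steps fail as written. First, no positive $\epsilon_k$ makes every $F$-Bowen ball centered in $\iota(\interior(\D))$ stay inside $B$: for $x$ near $\partial B$ (or $x\in\partial B$, a fixed point) the ball always leaks out, so the Katok comparison $h_k^F(\iota_*\nu)=h_k^g(\nu)$ is not established this way. Second, the inductive step is incomplete: a sequence $\nu_n\to\mu$ with $\mu\in\iota_*M(\D,g)$ need not decompose as ``a $B$-portion converging to $\mu$ plus a fixed-point portion''; e.g.\ $\nu_n=\delta_{p_n}$ with $p_n\notin B$, $p_n\to q\in\partial B$, has zero $B$-portion. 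More importantly, your induction only tracks $u_\gamma^{\H(F)}$ on $\iota_*M(\D,g)$ and on fixed-point measures, but the $\limsup$ defining $u_{\gamma+1}^{\H(F)}(\mu)$ runs over \emph{all} nearby invariant measures, including mixtures $t\,\mu'+(1-t)\delta_p$; since $u_\gamma^{\H(F)}$ is concave (not convex), you get no upper bound on such mixtures from the values on the two pieces. Third, ``concave and u.s.c.\ implies the supremum is attained on extreme points'' is false --- that is the behavior of convex functions --- so your deduction of $||u_\al^{\H(F)}||=a$ is unjustified.

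Your strategy can be repaired, but it needs the Embedding Lemma (Lemma~\ref{EmbeddingLemma}) rather than a bare induction. Take $C=\iota_*M(\D,g)$; every ergodic measure not in $C$ is a fixed-point mass with $h-h_k\equiv 0$, so the Lemma gives $u_\gamma^{\H(F)}(x)=\max_{\mu\in\Phi^{-1}(x)}\int_C u_\gamma^{\H(F)|_C}\,d\mu$ and $\al_0(\H(F))\le\al_0(\H(F)|_C)$. One then checks that $\H(F)|_C$ is an entropy structure for $g$ (e.g.\ via $\H^{\textnormal{fun}}$, since a point-separating family of continuous functions on $M$ restricts to one on $B$), which gives $u_\gamma^{\H(F)|_C}=u_\gamma^{\H(g)}\circ\iota_*^{-1}$. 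The norm equality and the lower bound $\al_0(\H(F))\ge\al$ then come from the last clause of the Embedding Lemma applied at an \emph{ergodic} maximizer of $u_\al^{\H(g)}$; such a maximizer exists because the $g$ from Theorem~\ref{mainDiscThm} lies in $\cS(\al,d,a)$ and hence satisfies property~(4) of Definition~\ref{classesDefn}. The paper's annular buffer is precisely the device that lets one bypass all of this bookkeeping.
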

\begin{proof}
Let $d = \dim(M)$, and let $\D$ be the closed unit ball in $\R^d$. By Corollary \ref{higherDimCor}, there exists a continuous onto map $g : \D \to \D$ such that $g|_{\partial \D} = \Id$, $\al_0(g)=\al$, $||u_{\al}^{\H(g)}||=a$, and $g$ is a homeomorphism if $d \geq 2$.  We define a map $G : \D \to \D$ as follows. Let $G|_{\overline{B(\0,\frac{1}{2})}} = A_{\frac{1}{2},\0} \circ g \circ A_{2,\0}$, where $A_{s,\mathbf{p}}$ is the affine map on $\R^d$ given by $A_{s,\mathbf{p}}(x) = sx+\mathbf{p}$ and $\0$ is the origin. Now parametrize the annulus $\{x \in \R^d : \frac{1}{2} \leq |x| \leq 1\}$ with polar coordinates $(r,\theta) \in [\frac{1}{2},1] \times S^1$. For $(r,\theta)$ in $[\frac{1}{2},1] \times S^1$, let $G(r,\theta) = (r + \frac{1}{10} \sin ( 2 \pi r), \theta)$. Now $G$ is in $\Cl_d$ and $\overline{B(\0,\frac{1}{2})}$ is an isolated set for $G$. Let $\phi : \D \rightarrow M$ be a homeomorphism onto its image (such a map exists since $M$ is a manifold). Define $f : M \to M$ as follows. For $x$ in $\phi(\D)$, let $f(x) = \phi( G ( \phi^{-1}(x) ) )$. For $x$ in $M \setminus \phi(\D)$, let $f(x)=x$. Then $NW(f) = \phi(\overline{B(\0,\frac{1}{2})}) \cup ( M \setminus \phi(\interior(\D)) )$. Further, $f$ is topologically conjugate to $G|_{\overline{B(\0,\frac{1}{2})}}$ on $\phi(\overline{B(\0,\frac{1}{2})})$, and $f$ is the identity on $M \setminus \phi( \interior(D))$. It follows that $\al_0(f) = \al_0(G) = \al_0(g) = \al$ and $||u_{\al}^{\H(f)}|| = ||u_{\al}^{\H(G)}|| = ||u_{\al}^{\H(g)}|| = a$.
\end{proof}


\end{document}